\theoremstyle{plain} 
\newtheorem{theorem}{\indent\sc Theorem}[section]
\newtheorem{corollary}[theorem]{\indent\sc Corollary}
\newtheorem{proposition}[theorem]{\indent\sc Proposition}
\newtheorem{fact}[theorem]{\indent\sc Fact}
\newtheorem{problem}[theorem]{\indent\sc Problem}
\newtheorem{conjecture}[theorem]{\indent\sc Conjecture}
\theoremstyle{definition} 
\newtheorem{definition}[theorem]{\indent\sc Definition}
\newtheorem{remark}[theorem]{\indent\sc Remark}
\newtheorem{example}[theorem]{\indent\sc Example}
\def\R{{\mathbf{R}}}
\def\Pi{{\mathbf{P}}}
\begin{document}

\title[Gauss images of complete minimal surfaces of genus zero]{The Gauss images of complete minimal surfaces of genus zero of finite total curvature} 

\author[Y.~Kawakami]{Yu Kawakami} 

\author[M.~Watanabe]{Mototsugu Watanabe} 

\dedicatory{Dedicated to Professors Masaaki Umehara and Kotaro Yamada on their sixtieth birthdays}

\renewcommand{\thefootnote}{\fnsymbol{footnote}}
\footnote[0]{2020\textit{ Mathematics Subject Classification}.
Primary 53A10; Secondary 30D35, 53A05.}
\keywords{
Gauss map, minimal surface, omitted value, the total weight of the totally ramified values. 
}
\thanks{
This work was supported by JSPS KAKENHI Grant Number JP19K03463, JP23K03086. 
}

\address{
Faculty of Mathematics and Physics, Kanazawa University \endgraf
Kanazawa, 920-1192, Japan
}
\email{y-kwkami@se.kanazawa-u.ac.jp}

\address{
Toyama Prefectural Chuo Agricultural High School, \endgraf
Toyama, 930-1281, Japan
}
\email{mototsuguw03@gmail.com}


\maketitle

\begin{abstract}
This paper aims to present a systematic study on the Gauss images of complete minimal surfaces of genus $0$ of finite total curvature in Euclidean $3$-space and Euclidean $4$-space. We focus on the number of omitted values and the total weight of the totally ramified values of their Gauss maps. In particular, we construct new complete minimal surfaces of finite total curvature whose Gauss maps have $2$ omitted values and $1$ totally ramified value of order $2$, that is, the total weight of the totally ramified values of their Gauss maps are $5/2\,(=2.5)$ in Euclidean $3$-space and Euclidean $4$-space, respectively. Moreover we discuss several outstanding problems in this study.   
\end{abstract}

\section{Introduction}\label{sec-Int} 

The study of the Gauss images of complete minimal surfaces in Euclidean space have been of interest ever since the Nirenberg conjecture and the Osserman work (see \cite[Chapter 8]{Os1986}). One of the most 
notable results in this study is the Fujimoto theorem \cite{Fu1988}, which states that the Gauss map of a nonflat complete minimal surface in Euclidean $3$-space $\mathbf{R}^3$ can omit at most $4$ values in the Riemann sphere 
$\overline{\mathbf{C}} := \mathbf{C}\cup \{ \infty \} \simeq S^{2}$. This theorem is optimal because the Gauss map of the classical Scherk surface omits exactly $4$ values in $\overline{\mathbf{C}}$.  On the other hand, Osserman \cite{Os1964} showed that the Gauss map of a nonflat complete minimal surface of finite total curvature in $\mathbf{R}^3$ can omit at most $3$ values in $\overline{\mathbf{C}}$. However, no complete minimal surface of finite total curvature in $\mathbf{R}^3$ whose Gauss map omits $3$ distinct values in $\overline{\mathbf{C}}$ has been found. So the precise maximum is expected to be ``$2$''. There have been several attempts (e.g. \cite{KKM2008, MJ2018}), but no complete solution has been reached.        
   
On the other hand, in the theory of value distribution of meromorphic functions, totally ramified values are studied as well as omitted values. Here we recall the notion of an omitted value and a totally ramified value of a meromorphic function.  

\begin{definition}\label{def-total}
Let $f\colon \Sigma \to \overline{\mathbf{C}}$ be a meromorphic function on a Riemann surface $\Sigma$. 
We call a value that $f$ never attains an {\it omitted value} of $f$. We denote the number of omitted values of $f$ by $D_{f}$. 
Set $\nu\, (\geq 2) \in \mathbf{Z}\cup \{ \infty \}$. We say that $\alpha \in \overline{\mathbf{C}}$ is a {\it totally ramified value} of $f$ of {\it order} $\nu$ if the equation $f=\alpha$ has no root of multiplicity less than $\nu$ on $\Sigma$. We regard an omitted value of $f$ as a totally ramified value of $f$ of order $\infty$ because $\nu =\infty$ means that $f=\alpha$ has no root of any order. Then the {\it weight} for a totally ramified value of $f$ of order $\nu$ is defined by $1-(1/\nu)$. By {\it total weight} $\nu_{f}$ of the totally ramified values of $f$, we mean the sum of their weights. By definition, we have $D_{f}\leq \nu_{f}$.   
\end{definition}
 
In order to explain the natural interpretation for the total weight of the totally ramified values, we need the second main theorem in Nevanlinna theory. See \cite{Ko2003, Ne1970, NW2013, Ro1939, Ru2021} for more details. 

\begin{remark}\label{rmk-total}
The idea of the total weight of the totally ramified values is due to R. Nevanlinna 
(see \cite[Page 102]{Ne1929}, \cite[\S X.3]{Ne1970}). See \cite[Page 83]{Be2006} for historical 
background and  relationship to the theory of normal family. 
In \cite{Ka2006, Ka2009, KKM2008}, the value $\nu_{f}$ in Definition \ref{def-total} is called `` the totally ramified value number of $f$''. However, this term is confusing with ``the number of totally ramified values of $f$'', so we refer $\nu_{f}$ as ``the total weight of the totally ramified values of $f$'' with reference to the Robinson paper \cite{Ro1939}. On the other hand, in \cite{Ro1939}, ``totally ramified value'' in Definition \ref{def-total} is called ``exceptional value''. However, ``exceptional value'' is often used in the sense of ``omitted value'', so  we choose  the term ``totally ramified value'' (for example, this term is used in the Nevanlinna book \cite{Ne1970}) in this paper.  
\end{remark}

By Definition \ref{def-total}, the total weight of the totally ramified values gives more detailed information than the number of omitted values. We demonstrate it for nonconstant meromorphic functions 
on the complex plane $\mathbf{C}$. By the little Picard theorem, a nonconstant meromorphic function $f\colon \mathbf{C}\to \overline{\mathbf{C}}$ can omit at most $2$ values, that is, $D_{f}\leq 2$. On the other hand, by using the fundamental theorem of algebra, we can show that a nonconstant rational function $f\colon \mathbf{C}\to \overline{\mathbf{C}}$ can omit at most $1$ value, that is, $D_{f}\leq 1$. Thus the difference between the general case and the algebraic case seems significant regarding the maximum for the number of omitted values. 
However, regarding the maximum for the total weight of the totally ramified values, the difference becomes smaller. In fact, we have $\nu_{f}\leq 2$ (Fact \ref{thm-rami1}) for a nonconstant meromorphic function $f\colon \mathbf{C} \to \overline{\mathbf{C}}$, but we can show $\nu_{f}\leq 2-(1/d)$ for a nonconstant rational function $f\colon \mathbf{C} \to \overline{\mathbf{C}}$ whose degree is $d$ (Proposition \ref{prop-rational}).   

The purpose of this paper is to perform a systematic study on the Gauss images of complete minimal surfaces in $\mathbf{R}^3$ and $\mathbf{R}^4$. We focus on the number of omitted values and the total weight of the totally ramified values of their Gauss maps. The paper is organized as follows: In Section \ref{sec-mero}, we review the ramification estimates (Fact \ref{thm-rami1} and Proposition \ref{prop-rational}) for nonconstant meromorphic functions on $\mathbf{C}$ in detail. In Section \ref{sec-R3}, we study the number of omitted values and the total weight of the totally ramified values of the Gauss map of complete minimal surfaces in $\mathbf{R}^3$. In particular, we give a new complete minimal surface of finite total curvature in $\mathbf{R}^3$ whose Gauss map has $2$ omitted values and $1$ totally ramified value of order $2$, that is, the total weight of the totally ramified value of its Gauss map is $5/2\,(=2.5)$ (Theorem \ref{M-thm-1}). Moreover we obtain more precise estimate for the total weight of the totally ramified values of the Gauss map of complete minimal surfaces of genus $0$ of finite total curvature in $\mathbf{R}^3$ (Theorem \ref{thm-rami3}). In Section \ref{sec-R4}, we also study the number of omitted values and the total weight of the totally ramified values of the Gauss map of complete minimal surfaces in $\mathbf{R}^4$. In particular, we construct a complete minimal surface of finite total curvature in $\mathbf{R}^4$ whose both Gauss maps have $2$ omitted values and $1$ totally ramified value of order $2$, that is, the total weight of the totally ramified values of each Gauss map is $5/2$ (Theorem \ref{thm-R4-HKW-1}). Moreover we obtain more precise estimates for the number of omitted values (Corollary \ref{thm-R4-M3}) and the total weight of the totally ramified values (Theorem \ref{thm-R4-M2}) of the Gauss map of complete minimal surfaces of genus $0$ of finite total curvature in $\mathbf{R}^4$. Furthermore we provide examples (Theorems \ref{pro-R4-M4}, \ref{pro-R4-M5}) which show that its estimate for the number of omitted values  (Corollary \ref{thm-R4-M3}) is sharp. In Section \ref{sec-Prob}, we explain outstanding problem and conjecture in this study. 

Finally, the authors gratefully acknowledge the useful comments from Shunsuke Kasao and warm encouragement from Shoichi Fujimori, Pham Hoang Ha and Reiko Miyaoka during the preparation of this paper. The authors also would like to express their gratitude to the reviewer for valuable comments and suggestions.   

\section{The images of meromorphic functions on $\mathbf{C}$}\label{sec-mero} 

In this section, we discuss the ramification estimates for meromorphic functions on $\mathbf{C}$. By an application of the defect relation 
 (see \cite{Ko2003, Ne1970, NW2013, Ru2021} for instance) in Nevanlinna theory for meromorphic functions 
on $\mathbf{C}$, the following ramification estimate is given. 

\begin{fact}\label{thm-rami1}
Consider a nonconstant meromorphic function $f\colon \mathbf{C}\to \overline{\mathbf{C}}$. 
Let $D_{f}$ be the number of omitted values and $\nu_{f}$ be the total weight of the totally ramified values of $f$. Then we have 
\begin{equation}\label{eq-rami1}
D_{f} \leq \nu_{f} \leq 2. 
\end{equation}
\end{fact}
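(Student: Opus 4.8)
The plan is to reduce the statement to a standard consequence of Nevanlinna's second main theorem, so that almost all the work is borrowed. The left inequality $D_{f}\leq \nu_{f}$ requires nothing new: it is already recorded in Definition \ref{def-total}, since each of the $D_{f}$ omitted values is a totally ramified value of order $\infty$ and therefore contributes weight $1-(1/\infty)=1$ to $\nu_{f}$, while every remaining totally ramified value contributes a strictly positive weight. Hence the entire content lies in proving the right inequality $\nu_{f}\leq 2$.

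For that, I would first recall the Nevanlinna machinery attached to a nonconstant meromorphic function $f$ on $\mathbf{C}$: the characteristic function $T(r,f)$, the counting function $N(r,a)$ of the $a$-points of $f$ counted with multiplicity, and the truncated counting function $\overline{N}(r,a)$ counting each $a$-point only once. The engine of the argument is the defect relation in its form involving truncated counting functions,
$$\sum_{a\in\overline{\mathbf{C}}}\Theta(a)\leq 2,\qquad \Theta(a):=1-\limsup_{r\to\infty}\frac{\overline{N}(r,a)}{T(r,f)},$$
which follows from the second main theorem (see \cite{Ne1970, NW2013, Ru2021}). Here $0\leq \Theta(a)\leq 1$ for every $a$, and $\Theta(a)>0$ for only countably many $a$.

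The key step is then to bound $\Theta(a)$ from below at each totally ramified value, and to recognize that this bound is exactly its weight. Suppose $a\in\overline{\mathbf{C}}$ is a totally ramified value of $f$ of order $\nu$, so that every root of $f=a$ has multiplicity at least $\nu$. Comparing the counts with and without multiplicity gives $N(r,a)\geq \nu\,\overline{N}(r,a)$, while the first main theorem yields $N(r,a)\leq T(r,f)+O(1)$. Combining these,
$$\overline{N}(r,a)\leq \frac{1}{\nu}N(r,a)\leq \frac{1}{\nu}T(r,f)+O(1),$$
so that $\Theta(a)\geq 1-(1/\nu)$, which is precisely the weight assigned to $a$ in Definition \ref{def-total}. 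For an omitted value $\nu=\infty$ this reads $\Theta(a)\geq 1$, consistent with $\overline{N}(r,a)\equiv 0$.

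Finally I would sum over the totally ramified values. Since each such value has $\Theta(a)\geq 1-(1/2)=1/2$, the bound $\sum_{a}\Theta(a)\leq 2$ forces there to be at most finitely many of them, say $a_{1},\dots,a_{k}$ with orders $\nu_{1},\dots,\nu_{k}$; then
$$\nu_{f}=\sum_{j=1}^{k}\left(1-\frac{1}{\nu_{j}}\right)\leq \sum_{j=1}^{k}\Theta(a_{j})\leq \sum_{a\in\overline{\mathbf{C}}}\Theta(a)\leq 2,$$
which is the desired estimate. I do not expect any serious obstacle beyond invoking the defect relation correctly; the only genuine subtlety is the observation that the weight $1-(1/\nu)$ in Definition \ref{def-total} is exactly the lower bound on $\Theta(a)$ forced by total ramification of order $\nu$, after which the second main theorem does all the remaining work.
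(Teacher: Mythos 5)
Your proposal is correct and takes essentially the same approach as the paper: the paper states this result as a Fact obtained "by an application of the defect relation" in Nevanlinna theory, citing standard references without giving details, and your argument is precisely that standard deduction. You bound the truncated defect $\Theta(a)$ from below by the weight $1-(1/\nu)$ at each totally ramified value via the first main theorem and then invoke $\sum_{a}\Theta(a)\leq 2$, which is exactly how the cited sources establish the estimate.
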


The value ``$2$'' in \eqref{eq-rami1} is optimal because $f(z)=e^z$ has exactly $2$ omitted values $0, \infty$, that is, $D_{f}=2$. Moreover the Weierstrass $\wp$-function $f(z) =\wp (z)$ with the period $\omega_{1}, \omega_{2}$ has just $4$ totally ramified values of order $2$, $e_{1}=\wp (\omega_{1}/2),\, e_{2}=\wp (\omega_{2}/2),\, e_{3}=\wp ((\omega_{1}+\omega_{2})/2),\, \infty$. We thus obtain $\nu_{f}= 4(1-(1/2))=2$. The geometric interpretation for ``$2$'' is the Euler characteristic of the Riemann sphere (See \cite{Ah1935, Ch1960} for instance). 

On the other hand, by the argument of complex analytic geometry, the following result holds for nonconstant rational functions on $\mathbf{C}$.  

\begin{proposition}\label{prop-rational}
Let $f\colon \mathbf{C} \to \overline{\mathbf{C}}$ be a nonconstant rational function whose degree is $d\, (\geq 1)$. Then the number $D_{f}$ of omitted values and the total weight $\nu_{f}$ of the totally ramified values of $f$ satisfy 
\begin{equation}\label{eq-rational}
D_{f} \leq \nu_{f} \leq 2-\dfrac{1}{d}. 
\end{equation}
In particular, we have $\nu_{f}< 2$ and $D_{f}\leq 1$.  
\end{proposition}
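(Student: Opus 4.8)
The plan is to regard $f$ as a holomorphic self-map of the Riemann sphere and to extract the bound on $\nu_{f}$ from the Riemann--Hurwitz formula. First I would extend $f$ to a holomorphic map $\tilde{f}\colon \overline{\mathbf{C}}\to\overline{\mathbf{C}}$ of degree $d$ by $\tilde{f}(\infty)=\lim_{z\to\infty}f(z)$. Writing $e_{p}$ for the local multiplicity of $\tilde{f}$ at $p$, the Riemann--Hurwitz formula for a degree-$d$ map between spheres reads
\[
\sum_{p\in\overline{\mathbf{C}}}(e_{p}-1)=2d-2.
\]
For $\beta\in\overline{\mathbf{C}}$ put $R(\beta)=\sum_{p\in\tilde{f}^{-1}(\beta)}(e_{p}-1)=d-m(\beta)$, where $m(\beta)$ is the number of distinct preimages of $\beta$; then $\sum_{\beta}R(\beta)=2d-2$ and each $R(\beta)\ge 0$.

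The subtlety, and the step I expect to be the main obstacle, is that total ramification constrains only the roots lying in $\mathbf{C}=\Sigma$, so the behaviour over $\infty$ must be isolated. Since $\tilde{f}$ is surjective, $f$ omits a value $\alpha$ exactly when $\infty$ is the \emph{only} preimage of $\alpha$ under $\tilde{f}$; hence every omitted value equals $\tilde{f}(\infty)$, giving $D_{f}\le 1$, and at most one totally ramified value can have $\infty$ among its preimages. I would then bound $R$ at each totally ramified value $\alpha_{j}$ of order $\nu_{j}$, abbreviating $w_{j}:=1-1/\nu_{j}$. If $\infty\notin\tilde{f}^{-1}(\alpha_{j})$, all $m(\alpha_{j})$ preimages lie in $\mathbf{C}$ with multiplicity $\ge\nu_{j}$, so $d\ge m(\alpha_{j})\nu_{j}$ and therefore $R(\alpha_{j})\ge d\,w_{j}$. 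If $\infty\in\tilde{f}^{-1}(\alpha_{j})$ (the at most one exceptional value, which in particular covers any omitted value), then $\infty$ contributes multiplicity $\ge 1$ while the finite preimages contribute multiplicity $\ge\nu_{j}$, so $d\ge 1+(m(\alpha_{j})-1)\nu_{j}$ and one obtains only the weaker estimate $R(\alpha_{j})\ge(d-1)\,w_{j}$.

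Summing over all totally ramified values and comparing with $2d-2$ concludes the argument. If none of them equals $\tilde{f}(\infty)$, then $2d-2\ge\sum_{j}d\,w_{j}=d\,\nu_{f}$, which already gives the stronger $\nu_{f}\le 2-2/d$. Otherwise a single value $\alpha_{1}=\tilde{f}(\infty)$ satisfies the degraded bound, and
\[
2d-2\ \ge\ (d-1)w_{1}+d\!\sum_{j\ge 2}w_{j}\ =\ d\,\nu_{f}-w_{1},
\]
whence $\nu_{f}\le 2-2/d+w_{1}/d\le 2-1/d$ by $w_{1}\le 1$. In all cases $\nu_{f}\le 2-1/d<2$ for $d\ge 1$, and combining $D_{f}\le\nu_{f}$ (Definition \ref{def-total}) with the integrality of $D_{f}$ yields $D_{f}\le 1$. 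The whole difficulty is thus concentrated in the bookkeeping at $\infty$: recognizing that the domain being $\mathbf{C}$ rather than $\overline{\mathbf{C}}$ forces precisely one value to obey $(d-1)w_{j}$ in place of $d\,w_{j}$, and that this single defect is exactly what separates the sphere-to-sphere bound $2-2/d$ from the claimed $2-1/d$.
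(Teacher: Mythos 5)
Your proof is correct and follows essentially the same route as the paper's: extend $f$ to a degree-$d$ holomorphic self-map of $\overline{\mathbf{C}}$, use surjectivity of the extension to see that an omitted value has $\{\infty\}$ as its entire preimage (so $D_{f}\leq 1$ and only the value $\tilde{f}(\infty)$ can be exceptional), and play the preimage counts of the omitted and totally ramified values against the Riemann--Hurwitz total $2d-2$, the single unit of slack at $\infty$ being exactly what turns the sphere-to-sphere bound $2-2/d$ into $2-1/d$. If anything, your bookkeeping is tighter than the paper's: for a non-omitted totally ramified value $b_{j}$ the paper asserts $\#\bigl(f^{-1}(b_{j})\bigr)\leq d/\nu_{j}$, which tacitly assumes that if $\infty\in f^{-1}(b_{j})$ then the multiplicity there is also at least $\nu_{j}$ --- something the definition of total ramification on $\Sigma=\mathbf{C}$ does not guarantee --- whereas your separate case $\infty\in\tilde{f}^{-1}(\alpha_{j})$ with the degraded estimate $R(\alpha_{j})\geq (d-1)w_{j}$ covers precisely this configuration and still closes to $\nu_{f}\leq 2-1/d$, so your write-up in fact patches a small imprecision in the paper's argument.
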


\begin{proof}
Let $\# (A)$ be the cardinality of the set $A$ and $e_{p}(f)$ be the multiplicity of $f$ at $p$. 
Since $f$ is a rational function, $f$ can be extended meromorphically to $\overline{\mathbf{C}}$. 

Let $a_{1}, \ldots, a_{D_{f}}$ be the omitted values of $f\colon \mathbf{C} \to \overline{\mathbf{C}}$. 
Set
\[
n_{0} := \displaystyle \sum_{i=1}^{D_{f}} \left( \sum_{p\in f^{-1}(a_{i})} (e_{p}(f) -1) \right).  
\]
Then we obtain 
\[
dD_{f} = \displaystyle \sum_{i=1}^{D_{f}} \left( \sum_{p\in f^{-1}(a_{i})} e_{p}(f) \right) = n_{0} + \# \left( \bigcup_{i=1}^{D_{f}} f^{-1} (a_{i}) \right). 
\]
On the other hand, we have 
\begin{equation}\label{eq-rational0}
\bigcup_{i=1}^{D_{f}} f^{-1} (a_{i}) \subset \{ \infty \}
\end{equation}
because $f\colon \overline{\mathbf{C}} \to \overline{\mathbf{C}}$ is surjective. Thus we obtain 
\begin{equation}\label{eq-rational2}
dD_{f} \leq n_{0} + \# (\{ \infty \}) = n_{0} +1.  
\end{equation}
This inequality obviously holds for $D_{f}=0$. By \eqref{eq-rational0}, we have $D_{f}\leq 1$.  

Let $b_{1}, \ldots, b_{l_{0}}$ be the totally ramified values which are not omitted values. 
Set 
\[
n_{r} := \displaystyle \sum_{j=1}^{l_{0}} \left( \sum_{p\in f^{-1}(b_{j})} (e_{p}(f) -1) \right).  
\] 
For each $b_{j}$, the order $\nu_{j}$ is equal to the minimum of the multiplicity at all $f^{-1}(b_{j})$. 
Then the number of elements in the inverse image $f^{-1}(b_{j})$ is less than or equal to $d/\nu_{j}$. We thus obtain 
\begin{equation}\label{eq-rational3}
\displaystyle dl_{0} = \sum_{j=1}^{l_{0}} \left( \sum_{p\in f^{-1}(b_{j})} e_{p}(f) \right) = n_{r} + \# \left( \bigcup_{j=1}^{l_{0}} f^{-1} (b_{j}) \right) \leq n_{r} + \sum_{j=1}^{l_{0}} \dfrac{d}{\nu_{j}}. 
\end{equation}

Let $n_{f}$ be the total branching order of $f$. In particular, we have $n_{0}+n_{r}\leq n_{f}$. 
Then applying the Riemann-Hurwitz theorem to $f\colon \overline{\mathbf{C}} \to \overline{\mathbf{C}}$, we obtain $n_{f}=2(d-1)$. By \eqref{eq-rational2} and \eqref{eq-rational3}, we have 
\[
\displaystyle \nu_{f} = D_{f} +\sum_{j=1}^{l_{0}} \left( 1-\dfrac{1}{\nu_{j}} \right) \leq \dfrac{n_{0}+1}{d}+\dfrac{n_{r}}{d}\leq \dfrac{n_{f}+1}{d} = 2-\dfrac{1}{d}.  
\]
\end{proof}

The value ``$2-(1/d)$'' in \eqref{eq-rational} is optimal because $f(z)=z^{d}$ has $1$ omitted value $\infty$ and 
$1$ totally ramified value $0\, (=f(0))$ of order $d$. We thus obtain $\nu_{f} =1+(1-(1/d)) =2-(1/d)$. We note that 
Jin and Ru show the algebraic Nevanlinna second main theorem (see \cite[Theorem 2.1]{JR2007} and \cite[Theorem A1.1.3]{Ru2023}) by using the argument 
in the proof of Proposition \ref{prop-rational}.  

\begin{remark}\label{rmk-puncture}
If a meromorphic function $f\colon \mathbf{C}\backslash \{ 0 \} \to \overline{\mathbf{C}}$ can be extended meromorphically to $\overline{\mathbf{C}}$, 
by the same argument of the proof of Proposition \ref{prop-rational}, then we have $dD_{f}\leq n_{0}+2$ and 
\[
\nu_{f} = D_{f} +\sum_{j=1}^{l_{0}} \left( 1-\dfrac{1}{\nu_{j}} \right) \leq \dfrac{n_{0}+2}{d}+\dfrac{n_{r}}{d}\leq \dfrac{n_{f}+2}{d} = 2.   
\]
\end{remark}


\section{The Gauss images of complete minimal surfaces \\ of finite total curvature in $\mathbf{R}^3$}\label{sec-R3}

We first recall fundamental results of complete minimal surfaces in $\mathbf{R}^3$. 
Details can be found, for example, \cite{AFL2021, Fu1993, Ka2020, Ko2021, Lo1980, Os1986, Ru2023, Ya1994}.  
Let $X = (x^1, x^2, x^3)\colon \Sigma \to \mathbf{R}^3$ be an oriented minimal surface. 
By associating a local complex coordinate $z=u+\mathrm{i}v$ ($\mathrm{i}:= \sqrt{-1}$) 
with each positive isothermal coordinate $(u, v)$, $\Sigma$ is considered 
as a Riemann surface whose conformal 
metric is the induced metric $ds^2$ from $\mathbf{R}^3$. Then
\begin{equation}\label{eq-Lap}
\Delta_{ds^2} X=0
\end{equation}     
holds, that is, each coordinate function $x^{i}\, (i=1, 2, 3)$ is harmonic. With respect to the local complex 
coordinate $z=u+\mathrm{i}v$ of the surface, \eqref{eq-Lap} is given by 
\begin{equation}\label{eq-partial}
\bar{\partial} \partial X =0, 
\end{equation}
where $\partial = (\partial /\partial u - \mathrm{i}\partial /\partial v)/2, \bar{\partial} = (\partial /\partial u + \mathrm{i}\partial /\partial v)/2$. 
Hence each $\phi_{i}:= \partial x^{i}\,dz\, (i=1, 2, 3)$ 
is a holomorphic differential on $\Sigma$. These satisfy 
\begin{itemize}
\item $\sum_{i} {\phi_{i}}^{2} = 0$: conformality condition, 
\item $\sum_{i} |\phi_{i}|^{2} >0$: regularity condition, 
\item Each $\phi_{i}$ has no real periods on $\Sigma$, that is, $\mathrm{Re}\int_{c} \phi_{i}= 0$ holds 
for every cycle $c \in H_{1} (\Sigma, \mathbf{Z})$: period condition. 
\end{itemize}
We recover $X\colon \Sigma \to \mathbf{R}^3$ by 
\begin{equation}\label{eq-integral}
X(z) =\displaystyle \mathrm{Re} \left( \int^{z}_{z_{0}} 2 \phi_{1}, \int^{z}_{z_{0}} 2 \phi_{2}, \int^{z}_{z_{0}} 2 \phi_{3} \right)
\end{equation}
up to translation. Here $z_{0}$ is a fixed point of $\Sigma$. 
If we set 
\begin{equation}\label{eq-W-data}
g= \dfrac{\phi_{3}}{\phi_{1}-\mathrm{i}\phi_{2}}, \quad hdz = \phi_{1} -\mathrm{i}\phi_{2}, 
\end{equation}
then $g$ is a meromorphic function on $\Sigma$ and $hdz$ is a holomorphic differential on $\Sigma$. 
Moreover $g$ coincides with the composition of the Gauss map of $X(\Sigma)$ and the stereographic projection 
from the $2$-sphere $\mathbf{S}^2$ onto the Riemann sphere $\overline{\mathbf{C}}$. For the meromorphic function $g$ and the holomorphic differential $hdz$ given by \eqref{eq-W-data}, 
we have  
\begin{equation}\label{eq-phi}
\phi_{1}= \dfrac{1}{2}(1-g^2)\,hdz, \quad \phi_{2}=\dfrac{\mathrm{i}}{2}(1+g^2)\,hdz, \quad 
\phi_{3}= g hdz. 
\end{equation}
We call $(g, hdz)$ the {\it Weierstrass data} ({\it W-data}, for short). Conversely, if we are given the W-data $(g, hdz)$ 
on $\Sigma$, we obtain $\phi_{1}, \phi_{2}, \phi_{3}$ by \eqref{eq-phi}. 
They satisfy the conformality condition automatically, and the regularity condition is interpreted as the poles of $g$ of order $s$ coinciding exactly with the zeros of $hdz$ of order $2s$, 
that is, $(hdz)_{0} = 2(g)_{\infty}$, where $(hdz)_{0}$ is the zero divisor of $hdz$ and $(g)_{\infty}$ is the polar divisor of $g$.  
This is because the induced metric on $\Sigma$ is given by 
\begin{equation}\label{eq-metric}
ds^{2} = (1+|g|^2)^{2} |h|^2 |dz|^2. 
\end{equation}
In general, for a given meromorphic function $g$ on $\Sigma$, it is not so hard to find a holomorphic 
differential $hdz$ satisfying the regularity condition. However, the period condition always causes trouble. 
In addition, a minimal surface in $\mathbf{R}^3$ is said to be {\it complete} if all divergent paths have infinite length with respect to the metric given by \eqref{eq-metric}.  

The Gauss curvature of $X(\Sigma)$ is given by 
\begin{equation}\label{eq-G-curvature}
K = -\dfrac{4|g'|^2}{(1+|g|^2)^{4}|h|^2}, 
\end{equation}
where $g'=dg/dz$. Moreover the total curvature of $X(\Sigma)$ is given by 
\begin{equation}\label{eq-t-curvature}
C (\Sigma) := \displaystyle \int_{\Sigma} K\,dA 
= -\int_{\Sigma} \left( \dfrac{2|g'|}{1+|g|^2} \right)^2 du\wedge dv, \quad z=u+\mathrm{i}v, 
\end{equation}     
where $dA$ is the area element with respect to the metric \eqref{eq-metric}. Note that $|C (\Sigma)|$ is the area 
of $\Sigma$ with respect to the metric induced from the Fubini-Study metric of $\overline{\mathbf{C}}$ by $g$. 

Fujimoto \cite{Fu1988, Fu1992} proved the following precise estimate for the number of omitted values and the total weight of the totally ramified values of the Gauss map of complete minimal surfaces in $\mathbf{R}^3$. 
\begin{fact}\label{thm-rami2}
Consider a nonflat complete minimal surface $X\colon \Sigma \to \mathbf{R}^3$ and its Gauss map $g\colon \Sigma \to \overline{\mathbf{C}}$. Then the number $D_{g}$ of omitted 
values and the total weight $\nu_{g}$ of the totally ramified values of $g$ satisfy 
\begin{equation}\label{eq-rami2}
D_{g} \leq \nu_{g} \leq 4. 
\end{equation}
\end{fact}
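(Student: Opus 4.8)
The inequality $D_g\le\nu_g$ is immediate from Definition \ref{def-total}, since an omitted value is a totally ramified value of order $\infty$ and thus of weight $1$; so the whole content is the upper bound $\nu_g\le 4$. I would establish it by contradiction, pitting the completeness of the induced metric \eqref{eq-metric} against the rigidity that an excess of totally ramified values imposes on the meromorphic Gauss map $g$. The plan is to manufacture, out of the Weierstrass data $(g,h\,dz)$ and the ramification data, an auxiliary \emph{flat} conformal metric on $\Sigma$ that turns out to be complete; completeness then forces the universal cover of $\Sigma$ to be the plane $\mathbf{C}$, and the problem collapses onto the already-established planar estimate of Fact \ref{thm-rami1}.

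So assume $\nu_g>4$. Since each totally ramified value carries weight at least $1/2$, I may pass to a finite subfamily: choose totally ramified values $\alpha_1,\dots,\alpha_q\in\overline{\mathbf{C}}$ of orders $m_1,\dots,m_q$ with $\gamma:=\sum_{j=1}^q(1-1/m_j)>4$, and, after a rotation of $\overline{\mathbf{C}}$ (which amounts to a rotation of $\mathbf{R}^3$ and preserves both completeness and the ramification structure), I may assume $\infty\notin\{\alpha_1,\dots,\alpha_q\}$. Because $\gamma>4$, the interval $(2/(\gamma-2),\,1)$ is nonempty, and I fix an exponent $p$ inside it; this is the one place where the value $4$ is consumed. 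With this $p$ I set
\[
d\tau^2=|h|^{\frac{2}{1-p}}\left(\frac{\prod_{j=1}^q|g-\alpha_j|^{1-1/m_j}}{|g'|}\right)^{\frac{2p}{1-p}}|dz|^2 .
\]
Away from the discrete set consisting of the zeros of $h$, the zeros of $g'$, and the preimages $g^{-1}(\{\alpha_j\})$, each of $\log|h|$, $\log|g-\alpha_j|$ and $\log|g'|$ is harmonic, so the logarithm of the conformal factor is harmonic there; by the curvature formula underlying \eqref{eq-G-curvature} this means $d\tau^2$ is flat off the singular set.

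The heart of the argument is to show that $d\tau^2$ is complete. There is nothing to check at an interior ramification preimage of minimal multiplicity, where the conformal factor extends as a smooth positive density; the real issue is along divergent paths, that is, at the ends of $\Sigma$ (for a finite-total-curvature surface these are the punctures, where $g$ has a pole or a finite limit and $h\,dz$ behaves accordingly by the regularity relation $(h\,dz)_0=2(g)_\infty$). There I would compare $d\tau^2$ with the complete density $(1+|g|^2)|h|$ of \eqref{eq-metric}, using \eqref{eq-phi} and the choice $p>2/(\gamma-2)$, to show that the $\tau$-length of every divergent path diverges. Granting this, a complete flat surface is, up to the finitely many mild singularities of the metric, conformally the plane, so the universal cover of $\Sigma$ is $\mathbf{C}$; lifting $g$ through the covering $\pi\colon\mathbf{C}\to\Sigma$ yields a nonconstant meromorphic function $\hat g=g\circ\pi$ on $\mathbf{C}$. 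Since $\pi$ is a local biholomorphism it preserves multiplicities, so each $\alpha_j$ remains a totally ramified value of $\hat g$ of order at least $m_j$; hence $\nu_{\hat g}\ge\gamma>4>2$, contradicting the bound $\nu_{\hat g}\le 2$ of Fact \ref{thm-rami1}. This contradiction gives $\nu_g\le4$.

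I expect the completeness of $d\tau^2$ to be the main obstacle, and it is precisely the step that calibrates the extremal constant. One must verify that this carefully weighted singular metric assigns infinite length to divergent paths exactly when $\gamma>4$, simultaneously balancing the growth of $|h|$ and of the product $\prod|g-\alpha_j|^{1-1/m_j}$ against the decay of $|g'|$ at each end through the single free exponent $p$. It is this balance --- tighter than the one available for meromorphic functions on $\mathbf{C}$, where Fact \ref{thm-rami1} already yields the bound $2$ --- that explains why the sphere-valued Gauss map of a complete minimal surface gains the factor two and lands exactly at $4$.
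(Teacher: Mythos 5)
Your reduction of $D_{g}\leq \nu_{g}$ to Definition \ref{def-total} is fine, and your sketch correctly names several ingredients of Fujimoto's argument, which is what the paper itself relies on (Fact \ref{thm-rami2} is quoted from \cite{Fu1988, Fu1992}; no proof is given in the paper): the auxiliary flat conformal metric built from $(g,h\,dz)$, the ramification factors $|g-\alpha_{j}|^{1-1/m_{j}}$, the exponent $p$ whose existence is exactly where $\gamma >4$ is consumed, and the final appeal to the bound $\nu \leq 2$ on $\mathbf{C}$ (Fact \ref{thm-rami1}). But the logical pivot of your argument --- that $d\tau^{2}$ can be shown to be \emph{complete}, whence the universal cover of $\Sigma$ is $\mathbf{C}$ --- is a genuine gap, and in fact the claim is false, not merely unproven. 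The Fact concerns arbitrary nonflat complete minimal surfaces, including those of hyperbolic conformal type (for instance the Jorge--Xavier complete minimal surfaces in a slab, which are conformally the unit disk). If $\Sigma$ is hyperbolic, then so is $\Sigma'$, the complement in $\Sigma$ of the (possibly infinite, certainly not finite in general) singular set of $d\tau^{2}$, and a hyperbolic Riemann surface carries no complete flat conformal metric at all: a complete flat surface has universal cover isometric to the Euclidean plane, hence conformally $\mathbf{C}$. So no choice of $p$ can make $d\tau^{2}$ complete in that case, and the pointwise comparison you hope for between $d\tau$ and $ds$ indeed fails: their ratio involves $\bigl(|h|\prod_{j}|g-\alpha_{j}|^{1-1/m_{j}}/|g'|\bigr)^{p/(1-p)}/(1+|g|^{2})$, which is not bounded below. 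Your tacit restriction of the completeness check to punctures governed by $(h\,dz)_{0}=2(g)_{\infty}$ is really a finite-total-curvature assumption; but in that regime the sharp bound is $3$, not $4$ (Fact \ref{fact-Oss}), and the surfaces attaining $4$ (Scherk, Voss) have infinite total curvature, so the case your argument could conceivably handle is not the one that carries the content of the theorem. The fact that your final contradiction needs only $\gamma >2$, the constant $4$ entering solely through the existence of $p$, is itself a warning that the entire difficulty has been concentrated into the completeness claim.

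Fujimoto's proof runs in the opposite direction. One does not prove $d\tau^{2}$ is complete; instead, using only its flatness, one takes at a point of $\Sigma'$ a maximal locally isometric immersion $\Phi\colon D(R)\to \Sigma'$ of a Euclidean disk. If $R=\infty$, one lifts $g$ through $\Phi$ and contradicts Fact \ref{thm-rami1}; this is the only case your sketch covers. If $R<\infty$ --- the case that actually occurs when $\Sigma$ is hyperbolic --- there is a divergent path in $\Sigma'$ of finite $d\tau$-length, and the crux of the proof is to show, using the precise exponents and a Schwarz-lemma type estimate on $D(R)$, that this path diverges in $\Sigma$ itself (the removed points being at infinite $d\tau$-distance) and has finite length in the \emph{original} metric \eqref{eq-metric}, contradicting the assumed completeness of $ds^{2}$. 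So the contradiction is with completeness of the induced metric, not with Nevanlinna's theorem on $\mathbf{C}$; repairing your proposal essentially amounts to carrying out this second case, which is where all the analytic work of \cite{Fu1988, Fu1992} lies.
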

This result is proved by using complex analytic methods. The value ``$4$'' in \eqref{eq-rami2} is optimal. The most famous example of a complete minimal surface in $\mathbf{R}^3$ whose Gauss map omits exactly $4$ values 
is the classical Scherk surface. Another important example is the Voss surface \cite[Theorem 8.3]{Os1986}. The W-data of this surface is defined on $\Sigma = \mathbf{C}\backslash \{ \alpha_{1}, \alpha_{2}, \alpha_{3} \}$ for 
distinct $\alpha_{1}, \alpha_{2}, \alpha_{3}\in \mathbf{C}$, by 
\[ 
(g(z), hdz) = \left( z,\, \dfrac{dz}{\prod_{j=1}^{3} (z-\alpha_{j})} \right). 
\]
Since this data does not satisfy the period condition, we obtain a complete minimal surface $X\colon \widetilde{\Sigma}\to \mathbf{R}^3$ on the universal covering surface 
$\widetilde{\Sigma}$ of $\Sigma$. 
In particular, it has infinite total curvature. In \cite{Ka2013, Ka2015}, the first author showed a geometric interpretation for the maximal number ``$4$''. 

We next give some facts about the Gauss map of complete minimal surfaces of finite total curvature in $\mathbf{R}^3$ (\cite{Hu1957, Os1964}) .  

\begin{fact}[Huber-Osserman]\label{thm-HO}
A complete minimal surface $X\colon \Sigma \to \mathbf{R}^3$ of finite total curvature 
satisfies: 
\begin{enumerate}
\item[$\rm(\hspace{.18em}i\hspace{.18em})$] $\Sigma$ is conformally equivalent to $\overline{\Sigma}_{G}\backslash \{ p_{1}, \ldots, p_{k} \}$, where $\overline{\Sigma}_{G}$ is a closed Riemann surface of genus $G$ and $p_{1}, \ldots, p_{k} \in \overline{\Sigma}_{G}$. Then we also call the number $G$ the genus of $X(\Sigma)$.     
\item[$\rm(\hspace{.18em}ii\hspace{.18em})$] The Weierstrass data $(g, hdz)$ can be extended meromorphically to $\overline{\Sigma}_{G}$. 
In particular, if $d$ is the degree of $g\colon \overline{\Sigma}_{G} \to \overline{\mathbf{C}}$, then $C(\Sigma) =-4\pi d$ holds.  
\end{enumerate}
\end{fact}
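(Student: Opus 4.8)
The plan is to prove the two assertions separately, obtaining (i) from Huber's theorem and (ii) from a local analysis of the Weierstrass data at each puncture followed by a global degree count.

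First I would dispose of (i). Since $X$ is minimal, its Gauss curvature satisfies $K\le 0$ by \eqref{eq-G-curvature}, so the finite total curvature hypothesis is exactly $\int_{\Sigma}K^{-}\,dA=\int_{\Sigma}|K|\,dA=|C(\Sigma)|<\infty$, where $K^{-}=\max(-K,0)$. Huber's theorem \cite{Hu1957} then applies directly: a complete open Riemannian surface with $\int_{\Sigma}K^{-}\,dA<\infty$ is finitely connected and conformally equivalent to a closed Riemann surface with finitely many points removed. This yields $\Sigma\simeq\overline{\Sigma}_{G}\setminus\{p_{1},\dots,p_{k}\}$, with each end corresponding to a once-punctured disk.

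For (ii) I would first extend $g$. Fix a puncture $p_{j}$ with a punctured-disk coordinate $D^{*}=\{0<|z|<1\}$, $p_{j}\leftrightarrow z=0$. By \eqref{eq-t-curvature} the finite total curvature means $\int_{D^{*}}\bigl(2|g'|/(1+|g|^{2})\bigr)^{2}\,du\wedge dv<\infty$, and this integral is the spherical area of the Gauss image over the end, namely $\int_{\overline{\mathbf{C}}}\#\,g^{-1}(w)\,d\sigma(w)$ for the Fubini--Study area $d\sigma$ of the unit sphere (total area $4\pi$). If $g$ had an essential singularity at $z=0$, the great Picard theorem would force $\#\,g^{-1}(w)=\infty$ for all but at most two values $w$, making this area infinite, a contradiction. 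Hence $g$ is meromorphic at $0$; carrying this out at every puncture extends $g$ to a meromorphic function on $\overline{\Sigma}_{G}$, of some degree $d$. Granting this, the degree formula is immediate: the integrand in \eqref{eq-t-curvature} is the pull-back $g^{*}d\sigma$, which extends to a smooth $2$-form on $\overline{\Sigma}_{G}$ because $g$ is now meromorphic, and deleting the finitely many (null) punctures does not change the integral, so $\int_{\Sigma}g^{*}d\sigma=\int_{\overline{\Sigma}_{G}}g^{*}d\sigma=4\pi d$ and therefore $C(\Sigma)=-4\pi d$.

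It remains to extend $hdz$, and this is the step I expect to be the main obstacle, because finite total curvature constrains only $g$ (the factor $|h|$ cancels in \eqref{eq-t-curvature}, as one sees from \eqref{eq-G-curvature} and \eqref{eq-metric}). The control on $h$ must instead come from completeness of the metric \eqref{eq-metric}. Suppose $h$ had an essential singularity at $z=0$; since $g$ is already meromorphic there, $(1+|g|^{2})$ grows at most like a fixed power $|z|^{-2\mu}$ with $\mu\ge 0$ an integer (the pole order of $g$), so the length element behaves like $|z|^{-2\mu}|h|\,|dz|$. By the great Picard theorem $h$ would attain values arbitrarily close to $0$ in every neighbourhood of the puncture, so $|h|$ must decay along suitable directions; I would use this to produce a divergent path running out to $z=0$ along which $\int|z|^{-2\mu}|h|\,|dz|<\infty$, contradicting completeness. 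This rules out an essential singularity of $h$, forcing $hdz$ to be meromorphic at each $p_{j}$, so that the full Weierstrass data $(g,hdz)$ extends meromorphically to $\overline{\Sigma}_{G}$. The delicate point, which is the heart of Osserman's theorem \cite{Os1964}, is the construction of that finite-length path: one must show that the sublevel sets $\{|h|<\delta\}$ contain a connected path approaching the puncture along which the exponential decay of $|h|$ beats the polynomial factor $|z|^{-2\mu}$.
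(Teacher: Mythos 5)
The paper offers no proof of this statement: it is quoted as a known Fact with citations to Huber \cite{Hu1957} and Osserman \cite{Os1964}, so your attempt can only be measured against the classical argument. Your treatment of (i) via Huber's theorem, of the meromorphic extension of $g$ via the great Picard theorem combined with the interpretation of $\int_{\Sigma}|K|\,dA$ as spherical area counted with multiplicity, and of the degree formula $C(\Sigma)=-4\pi d$ are all correct, and they are exactly the classical steps (Osserman's Lemma 9.5 and Theorem 9.2 in \cite{Os1986}).

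The gap is in the extension of $hdz$, and it is twofold. First, you never invoke the regularity condition $(hdz)_{0}=2(g)_{\infty}$, and without it the purely function-theoretic statement you are trying to prove is \emph{false}: take $g(z)=1/z$ and $h(z)=\sin(1/z)$ on $\{0<|z|<1\}$. Here $g$ is meromorphic at the puncture and $h$ has an essential singularity there, yet the metric $(1+|g|^{2})|h|\,|dz|$ \emph{is} complete at $z=0$; under $w=1/z$ it is comparable to $|\sin w|\,|dw|$ on $\{|w|>1\}$, and every path to $w=\infty$ has infinite length (any increase of $|\mathrm{Im}\,w|$ costs at least $\int\sinh|\mathrm{Im}\,w|$, while any path with $|\mathrm{Im}\,w|$ bounded must cross infinitely many strips where $|\sin w|\geq 1/\sqrt{2}$). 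So no finite-length divergent path exists, and your intended contradiction evaporates. What rescues the argument for genuine Weierstrass data is regularity: the zeros of $h$ are exactly the poles of $g$ (with doubled order), and since $g$ is already known to be meromorphic at $p_{j}$, its poles cannot accumulate there; hence $h$ is \emph{zero-free} on a small punctured disk about $p_{j}$. You must insert this step. Second, even granted zero-freeness, your proposed mechanism --- that the sublevel sets $\{|h|<\delta\}$ contain a connected path reaching the puncture --- is not how the classical proof works and fails as stated: for $\sin(1/z)$ the sets $\{|\sin(1/z)|<\delta\}$ are disjoint islands around the zeros $1/(n\pi)$, none of which reaches $0$, even though a finite-length path (the real segment) exists. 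The actual proof of Osserman's Lemma 9.6 (compare also the completeness lemma of \cite{UY2011}) sets $W:=z^{-2\mu}h$, which is now zero-free, and considers the developing map $F(z)=\int W\,dz$ of the flat metric $|W|\,|dz|$: if every divergent path had infinite length, the local inverse of $F$ could be analytically continued along every ray to give a nonconstant bounded entire function, contradicting Liouville's theorem; a direction with finite continuation radius then produces a divergent path of finite length. So to complete your proof you need (1) the regularity/zero-free reduction and (2) this monodromy--Liouville argument (or an explicit citation of Osserman's lemma) in place of the sublevel-set sketch.
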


By this fact, we also call such a surface an {\it algebraic minimal surface} (\cite{Ya1994}). 
Osserman proved the following result for the number of omitted values of the Gauss map 
of complete minimal surfaces of finite total curvature in $\mathbf{R}^3$. 

\begin{fact}\cite[Theorem 3.3]{Os1964}\label{fact-Oss}
The Gauss map of a nonflat complete minimal surface of finite total curvature in $\mathbf{R}^3$ 
can omit at most 3 distinct values.    
\end{fact}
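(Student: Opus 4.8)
The plan is to combine the two results now available for rational functions: Fact~\ref{thm-HO} (Huber--Osserman), which lets me replace the transcendental Gauss map on the non-compact surface by an honest rational (meromorphic) function on a closed Riemann surface, and Proposition~\ref{prop-rational}, which bounds the number of omitted values of a nonconstant rational function by $1$. The key observation is that ``finite total curvature'' is exactly the hypothesis that converts an analytically delicate value-distribution problem into a purely algebraic one, after which the obstruction $D_f\le 1$ becomes available --- the only work is to account correctly for the finitely many punctures $p_1,\dots,p_k$.

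First I would invoke Fact~\ref{thm-HO}: since $X$ has finite total curvature, $\Sigma$ is conformally $\overline{\Sigma}_G\setminus\{p_1,\dots,p_k\}$ and the Gauss map $g$ extends to a meromorphic function $\widehat{g}\colon \overline{\Sigma}_G\to\overline{\mathbf C}$ of some finite degree $d$. Because the surface is nonflat, $\widehat{g}$ is nonconstant, so $\widehat{g}\colon\overline{\Sigma}_G\to\overline{\mathbf C}$ is surjective. Now suppose $a\in\overline{\mathbf C}$ is a value that $g$ omits on $\Sigma$. Since $\widehat{g}$ is surjective on $\overline{\Sigma}_G$, every preimage of $a$ under $\widehat{g}$ must lie in the puncture set $\{p_1,\dots,p_k\}$, exactly paralleling the inclusion~\eqref{eq-rational0} in the proof of Proposition~\ref{prop-rational}, but now with $\{\infty\}$ replaced by the $k$-point set of punctures.

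This gives the counting argument. Writing $n_a$ for the total branching contribution of $\widehat g$ over the omitted values, the degree relation yields $dD_g \le n_a + \#\bigl(\bigcup_i \widehat g^{-1}(a_i)\bigr)\le n_a + k$, since all $D_g$ fibers over the omitted values are confined to the $k$ punctures. In the genus-zero case one can push this all the way: for $G=0$ the Riemann--Hurwitz theorem gives total branching $2(d-1)$, and a puncture bound $k$ controlled by the asymptotics forces $D_g$ to be small. The statement to be proved, however, only claims $D_g\le 3$, which should follow from the rougher estimate that at most finitely many points are available as omitted-value preimages together with the completeness/end constraints. In fact the cleanest route is to recall that each end contributes at least one to the degree picture, so that the number of punctures over which an omitted value can concentrate is bounded in a way that caps $D_g$ at $3$.

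The main obstacle I expect is bounding the puncture count $k$ (or more precisely the number of distinct values whose fibers can hide entirely among the ends) correctly: unlike the clean single point $\{\infty\}$ in Proposition~\ref{prop-rational}, here several punctures may map to distinct omitted values, so one must rule out too many omitted values by an independent geometric input --- typically that the metric~\eqref{eq-metric} is complete precisely at the punctures and that the order of the branching of $\widehat g$ at each end is constrained. I would handle this by combining the degree-counting inequality $dD_g\le n_a+k$ with the Riemann--Hurwitz identity to control $n_a$, and then using the completeness condition to limit how many ends can simultaneously serve as omitted-value preimages, thereby squeezing $D_g$ down to at most $3$.
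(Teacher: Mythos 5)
Your first half is sound and matches the machinery the paper itself uses (this Fact is quoted from Osserman; the paper's closest in-house versions are Theorem \ref{thm-KKM} and the proof of Theorem \ref{thm-rami3}): by Fact \ref{thm-HO} the Gauss map extends to $\widehat{g}\colon\overline{\Sigma}_{G}\to\overline{\mathbf{C}}$ of degree $d$, every fiber over an omitted value is confined to the punctures, and the counting $dD_{g}\leq n_{a}+k$ together with Riemann--Hurwitz ($n_{a}\leq 2d-2+2G$) yields $D_{g}\leq 2+(2G-2+k)/d$. But this is where your argument stops being a proof: that bound alone is vacuous whenever $2G-2+k\geq d$, and nothing you say closes the gap. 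The step you wave at --- ``use completeness to limit how many ends can simultaneously serve as omitted-value preimages'' --- is the wrong mechanism: there is no such limit, and in general \emph{all} $k$ ends can map to omitted values (the catenoid's two ends both go to omitted values). What actually finishes the proof is a degree--end inequality, $2d\geq 2G-2+2k$, and it comes from data you never bring into play: completeness forces the extended $hdz$ to have a pole at each puncture $p_{j}$; the period condition upgrades the pole order to $\mu_{j}\geq 2$; the regularity condition $(hdz)_{0}=2(g)_{\infty}$ (after a rotation so that $g$ has no poles at the punctures) makes the total order of zeros of $hdz$ equal to $2d$; and the degree of the canonical divisor (Riemann--Roch applied to the meromorphic differential $hdz$) gives $2d-\sum_{j}\mu_{j}=2G-2$. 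Hence $2G-2+k=2d-\sum_{j}\mu_{j}+k\leq 2d-k<2d$, so $D_{g}\leq 2+(2G-2+k)/d<4$, i.e.\ $D_{g}\leq 3$. Without this chain (pole order at the ends plus Riemann--Roch for $hdz$), the estimate you derive cannot cap $D_{g}$ at $3$ for any class of surfaces.

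Two smaller points. First, your detour through ``the genus-zero case'' is a red herring: the Fact is claimed for every genus, and the argument above is uniform in $G$ (this is exactly why the paper needs no genus hypothesis in Theorem \ref{thm-KKM}, while Theorem \ref{thm-rami3} uses the same inequality more finely when $G=0$). Second, the analogy with Proposition \ref{prop-rational} is correct but weaker than you suggest: there the omitted fibers sit over the single point $\{\infty\}$ and one gets $D_{f}\leq 1$ for free, whereas here the set of punctures has size $k$ with no a priori bound, so the confinement argument by itself bounds nothing; the entire content of the theorem is the interplay between $k$ and $d$ forced by completeness, the period condition, and regularity.
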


The first author, Kobayashi and Miyaoka refined Fact \ref{fact-Oss} and 
obtained the following estimate by geometric quantities for the number of omitted values and the total weight 
of the totally ramified values of the Gauss map of complete minimal surfaces of finite total curvature 
in $\mathbf{R}^3$. 

\begin{theorem}\cite[Theorem 3.3]{KKM2008}\label{thm-KKM} 
Let $X\colon \Sigma = \overline{\Sigma}_{G}\backslash \{ p_{1}, \ldots, p_{k} \} \to \mathbf{R}^3$ be a nonflat 
complete minimal surface of finite total curvature, $g\colon \Sigma \to \overline{\mathbf{C}}$ be its Gauss map and 
$d$ be the degree of $g\colon \overline{\Sigma}_{G} \to \overline{\mathbf{C}}$. Then the number $D_{g}$ of omitted 
values and the total weight $\nu_{g}$ of the totally ramified values of $g$ satisfy 
\begin{equation}\label{eq-KKM}
D_{g}\leq \nu_{g} \leq 2+\dfrac{2}{R}, \quad \dfrac{1}{R}= \dfrac{G-1+(k/2)}{d},   
\end{equation}
and $1/R <1$ holds. Thus we have 
\begin{equation}\label{eq-KKM2}
D_{g}\leq \nu_{g}< 4. 
\end{equation}   
\end{theorem}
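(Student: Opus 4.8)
The plan is to transplant the complex-analytic counting in the proof of Proposition~\ref{prop-rational} from a rational map on $\overline{\mathbf{C}}$ to the (extended) Gauss map on the compactification $\overline{\Sigma}_{G}$, and then to add one genuinely geometric input in order to upgrade the resulting inequality to the strict bound $1/R<1$. First I would invoke Fact~\ref{thm-HO} (Huber--Osserman): $g$ extends to a meromorphic map $g\colon \overline{\Sigma}_{G}\to\overline{\mathbf{C}}$ of degree $d$, and $hdz$ extends meromorphically as well. All the counting then takes place on the closed surface, where the Riemann--Hurwitz theorem gives total branching order $n_{g}=2(d-1+G)$. The punctures $P=\{p_{1},\dots,p_{k}\}$ play exactly the role that the single point $\infty$ played in Proposition~\ref{prop-rational}.

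For the estimate $\nu_{g}\le 2+2/R$, I would split the preimages on $\overline{\Sigma}_{G}$ of each totally ramified value $\alpha$ (counting omitted values as totally ramified of order $\infty$) into those lying in $\Sigma$ and those lying among the punctures. An omitted value of $g|_{\Sigma}$ has all of its preimages at the punctures, hence carries full multiplicity $d$ there; for a non-omitted totally ramified value of order $\nu$, every preimage in $\Sigma$ has multiplicity $\ge\nu$, so the elementary inequality $e_{p}(g)-1\ge(1-1/\nu)\,e_{p}(g)$ applies at those points. Writing $d_{\alpha}$ for the multiplicity of $\alpha$ absorbed at the punctures and $N=\sum_{p\in P}e_{p}(g)$ for the total puncture multiplicity, comparison of the branching over $\Sigma$, namely $n_{g}-\sum_{p\in P}(e_{p}(g)-1)=2(d-1+G)-N+k$, with its lower bound coming from the totally ramified values yields
\[
d\nu_{g}-\sum_{\alpha}\Bigl(1-\frac{1}{\nu_{\alpha}}\Bigr)d_{\alpha}\le 2(d-1+G)-N+k.
\]
Since the puncture-fibres over distinct values are disjoint, $\sum_{\alpha}(1-1/\nu_{\alpha})\,d_{\alpha}\le\sum_{\alpha}d_{\alpha}\le N$; the two occurrences of $N$ cancel, leaving $d\nu_{g}\le 2(d-1+G)+k$, which is precisely $\nu_{g}\le 2+(2G-2+k)/d=2+2/R$. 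Together with $D_{g}\le\nu_{g}$ from Definition~\ref{def-total} this gives \eqref{eq-KKM}.

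The strict inequality $1/R<1$, i.e. $2d-2G+2>k$, is where the minimal-surface structure must enter, and I expect it to be the main obstacle. I would work with the forms $\phi_{1},\phi_{2},\phi_{3}$, which are holomorphic on $\Sigma$ and extend with poles only at the ends. Completeness forces some $\phi_{i}$ to have a pole at each $p_{j}$; if all such poles were simple, then writing $\phi_{i}=(c_{i}/z+\cdots)\,dz$ near $p_{j}$, the vanishing of the real periods forces every $c_{i}\in\mathbf{R}$, while the conformality relation $\sum_{i}\phi_{i}^{2}=0$ forces $\sum_{i}c_{i}^{2}=0$, whence all $c_{i}=0$, a contradiction. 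Thus the metric pole order $\mu_{j}$ at each end satisfies $\mu_{j}\ge 2$. A divisor count for $hdz$ on $\overline{\Sigma}_{G}$ (using $\deg(\mathrm{div}\,hdz)=2G-2$, the regularity relation $(hdz)_{0}=2(g)_{\infty}$ on $\Sigma$, and $\deg g=d$) produces the identity $\sum_{j=1}^{k}\mu_{j}=2d-2G+2$; since $k\ge 1$ (the surface is noncompact) and each $\mu_{j}\ge 2$, we get $2d-2G+2\ge 2k>k$, which is \eqref{eq-KKM2}.

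In short, the counting half is a routine adaptation of Proposition~\ref{prop-rational}, the only care needed being the bookkeeping of the puncture contributions $d_{\alpha}$ and the cancellation of $N$. The essential difficulty is the strict bound: unlike everything else it is not formal, and it rests on excluding order-one ends, which is forced by the interaction of the period condition with the conformality relation $\sum_{i}\phi_{i}^{2}=0$ --- precisely the point at which the geometry of minimal surfaces, rather than pure value-distribution theory, is used.
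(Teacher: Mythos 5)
Your proposal is correct and follows essentially the same route as the paper: the paper quotes this statement from \cite[Theorem 3.3]{KKM2008}, but the ingredients of that proof are precisely what you use, namely the counting argument of Proposition \ref{prop-rational} transplanted to $\overline{\Sigma}_{G}$ (with the punctures playing the role of $\infty$), together with the end analysis --- completeness, the period condition and the conformality relation $\sum_{i}\phi_{i}^{2}=0$ forcing $\mu_{j}\geq 2$, followed by a divisor count for $hdz$ --- which the paper itself reproduces in its proof of Theorem \ref{thm-rami3}. The one point worth making explicit is that in your identity $\sum_{j=1}^{k}\mu_{j}=2d-2G+2$ the number $\mu_{j}$ must be taken as the maximal pole order of the $\phi_{i}$ at $p_{j}$ (equivalently, first rotate the surface so that $g$ has no pole at any $p_{j}$, exactly as the paper does at the start of the proof of Theorem \ref{thm-rami3}), since otherwise poles of $g$ at the punctures would enter the regularity relation $(hdz)_{0}=2(g)_{\infty}$ and change the pole order of $hdz$; with that reading both halves of your argument go through verbatim.
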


Theorem \ref{thm-KKM} is proved by using the methods of complex analytic geometry. 
In \cite[Page 90]{Os1986}, Osserman proposed the following problem.  

\begin{problem}\label{Pro-Oss}
Does there exist a complete minimal surface of finite total curvature in $\mathbf{R}^3$ 
whose Gauss map omits $3$ values?
\end{problem}

If so, the value ``$3$'' is the precise bound. If not, the maximum is ``$2$'' and is attained by the catenoid.  
Since no complete minimal surface of finite total curvature in $\mathbf{R}^3$ whose Gauss map omits $3$ values has been found (see \cite{Fa1993, WX1987}), the maximum is expected to be ``$2$''. 
If the best upper bound for the number of omitted values is ``$2$'', then of course the best upper bound for the total weight of the totally ramified values is also assumed to be ``$2$''.  However, the first author found a complete minimal surface of finite total curvature whose Gauss map has $2$ omitted values and $1$ totally ramified value of order $2$, that is, the total weight of the totally ramified values of its Gauss map is ``$5/2\,(=2.5)$'' in one of examples in \cite{MS1994}.   

\begin{theorem}\cite[Proposition 3.1]{MS1994}, \cite{Ka2006}\label{M-thm-0}
Consider $\Sigma =\overline{\mathbf{C}} \setminus \{\pm \mathrm{i}, \infty \}$ and 
\begin{equation}\label{MS1}
\left\{ \,
\begin{aligned}
& g(z) = \sigma \dfrac{z^2+1+a(t-1)}{z^2+t}, \\
& hdz = \dfrac{(z^2 +t)^2}{(z^2 +1)^2}\, dz, \quad (a-1)(t-1)\not= 0,   \\
& \sigma^{2} = \dfrac{t+3}{a \{ (t-1)a +4\}}.
\end{aligned}
\right.
\end{equation}
For any $a, t\in \mathbf{R}\setminus\{ 1 \}$ satisfying $\sigma^{2}< 0\,\, \text{i.e.}\,\, \sigma \in \mathrm{i}\mathbf{R}$, 
we obtain a complete minimal surface in $\mathbf{R}^3$ whose Weierstrass data is given by \eqref{MS1}. 
In particular, the surface has finite total curvature and its Gauss map has 2 omitted values $\sigma\,(=g(\infty)), \sigma a\,(=g(\pm \mathrm{i}))$ and $1$ totally ramified value $\sigma (1+a(t-1))/t\, (=g(0))$ of order 2. 
Thus the total weight $\nu_{g}$ of the totally ramified values of $g$ is $1+1+(1 - 1/2) =5/2$. 
\end{theorem}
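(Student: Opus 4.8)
The plan is to verify in turn the four conditions that make the data \eqref{MS1} a genuine complete minimal surface---conformality, regularity, completeness, and the period condition---and then to read off the value distribution of $g$ directly from its explicit form. Conformality is automatic from the representation \eqref{eq-phi}. For regularity I would check $(hdz)_{0}=2(g)_{\infty}$: the only poles of $g$ on $\overline{\mathbf{C}}$ sit at the simple zeros of $z^{2}+t$, where $hdz$ has zeros of exactly order $2$, while at the punctures $\pm\mathrm{i},\infty$ the function $g$ is finite; hence the metric \eqref{eq-metric} is well defined and positive. Since $(g,hdz)$ extends meromorphically to all of $\overline{\mathbf{C}}$ and $g$ has degree $d=2$ (it factors through $\zeta=z^{2}$ as a fractional-linear function of $\zeta$), Fact \ref{thm-HO} gives finite total curvature $C(\Sigma)=-8\pi$. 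Completeness follows because $hdz$ has a double pole at each puncture while $g$ stays finite there, so $ds\sim|z\mp\mathrm{i}|^{-2}|dz|$ near $\pm\mathrm{i}$ (and similarly at $\infty$), forcing every divergent path to have infinite length.

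The main work is the period condition, and the efficient route is to exploit two symmetries before computing anything. Because $g$ and $h$ are even in $z$, each $\phi_{j}=f_{j}(z)\,dz$ has $f_{j}$ even, so the involution $z\mapsto -z$ sends $\phi_{j}\mapsto -\phi_{j}$; combined with the residue theorem on $\overline{\mathbf{C}}$ (the only poles of $\phi_{j}$ lie at $\pm\mathrm{i},\infty$), this yields $\mathrm{Res}_{\infty}\phi_{j}=0$ and $\mathrm{Res}_{-\mathrm{i}}\phi_{j}=-\mathrm{Res}_{\mathrm{i}}\phi_{j}$ for each $j$, collapsing the whole period problem to the single puncture $z=\mathrm{i}$. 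Next, for $a,t\in\mathbf{R}$ and $\sigma\in\mathrm{i}\mathbf{R}$ one has $\overline{g(z)}=-g(\bar z)$ and $\overline{h(z)}=h(\bar z)$; feeding this through the conjugation law $\overline{\mathrm{Res}_{\mathrm{i}}(f\,dz)}=\mathrm{Res}_{-\mathrm{i}}\bigl(\overline{f(\bar{\,\cdot\,})}\,dw\bigr)$ shows that $\mathrm{Res}_{\mathrm{i}}\phi_{2}$ and $\mathrm{Res}_{\mathrm{i}}\phi_{3}$ are automatically real (so their real periods vanish), while $\mathrm{Res}_{\mathrm{i}}\phi_{1}$ is forced to be purely imaginary. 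Thus the \emph{only} genuine period equation is $\mathrm{Res}_{\mathrm{i}}\phi_{1}=0$.

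To finish the period condition I would compute that one residue. Writing $g=\sigma\tilde g$ with $\tilde g=(z^{2}+c)/(z^{2}+t)$ and $c=1+a(t-1)$, the clean point is the cancellation $\tilde g^{2}h=(z^{2}+c)^{2}/(z^{2}+1)^{2}$, so both terms of $\mathrm{Res}_{\mathrm{i}}\phi_{1}=\tfrac12\bigl[\mathrm{Res}_{\mathrm{i}}(hdz)-\sigma^{2}\,\mathrm{Res}_{\mathrm{i}}(\tilde g^{2}hdz)\bigr]$ are instances of the same double-pole residue. A short expansion at $z=\mathrm{i}$ gives $\mathrm{Res}_{\mathrm{i}}\bigl((z^{2}+s)^{2}/(z^{2}+1)^{2}\,dz\bigr)=-\tfrac{\mathrm{i}}{4}(s-1)(s+3)$; applying this with $s=t$ and with $s=c$ (and using $c-1=a(t-1)$, $c+3=a(t-1)+4$) turns $\mathrm{Res}_{\mathrm{i}}\phi_{1}=0$ into exactly $\sigma^{2}=(t+3)/\{a[(t-1)a+4]\}$, the stated normalization; the hypothesis $\sigma\in\mathrm{i}\mathbf{R}$ is what makes this admissible. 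I expect this residue bookkeeping to be the only real obstacle, and it is defused by the two symmetry reductions above.

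The value-distribution statements are then immediate algebra. Solving $g(z)=\sigma$ reduces to $(a-1)(t-1)=0$, impossible by hypothesis, so $\sigma=g(\infty)$ is omitted; solving $g(z)=\sigma a$ forces $z^{2}=-1$, i.e. $z=\pm\mathrm{i}$, which are punctures, so $\sigma a=g(\pm\mathrm{i})$ is omitted as well. For the ramified value, $g(z)=\sigma(1+a(t-1))/t=g(0)$ reduces to $(t-c)z^{2}=0$, and since $t\neq c$ (again by $(a-1)(t-1)\neq0$) the only solution is $z=0$ with multiplicity $2$; as $\deg g=2$ this is the full fiber, so $g(0)$ is totally ramified of order exactly $2$. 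Adding weights, the two omitted values contribute $1+1$ and the order-$2$ value contributes $1-\tfrac12$, giving $\nu_{g}=5/2$.
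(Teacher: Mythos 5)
Your proposal is correct, but a comparison needs a caveat: this paper never proves Theorem \ref{M-thm-0} at all --- it is quoted from \cite[Proposition 3.1]{MS1994} and \cite{Ka2006} --- so the relevant in-paper benchmark is the proof of the analogous Theorem \ref{M-thm-1}. Your argument shares that proof's skeleton (conformality/regularity, period condition, completeness, then the value distribution), but you treat the crucial period condition genuinely differently. The paper's proofs of Theorems \ref{M-thm-1} and \ref{thm-R4-HKW-1} simply tabulate all period integrals $\int_{c_p}\phi_j$ and observe that each lies in $\mathrm{i}\mathbf{R}$, the normalization of $\sigma^2$ entering silently through those computations; you instead use evenness of the data to reduce everything to residues at $z=\mathrm{i}$, then use $a,t\in\mathbf{R}$ and $\bar{\sigma}=-\sigma$ to show $\mathrm{Res}_{\mathrm{i}}\phi_2,\,\mathrm{Res}_{\mathrm{i}}\phi_3\in\mathbf{R}$ automatically while $\mathrm{Res}_{\mathrm{i}}\phi_1\in\mathrm{i}\mathbf{R}$, so the single equation $\mathrm{Res}_{\mathrm{i}}\phi_1=0$ is the entire period problem. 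Your residue formula $\mathrm{Res}_{\mathrm{i}}\bigl((z^2+s)^2/(z^2+1)^2\,dz\bigr)=-\tfrac{\mathrm{i}}{4}(s-1)(s+3)$ checks out, and it exhibits $\sigma^{2}=(t+3)/\{a[(t-1)a+4]\}$ as precisely the vanishing of that residue rather than as an unexplained hypothesis --- a real gain in transparency over the paper's (and, in spirit, \cite{MS1994}'s) direct tabulation, which in exchange yields the explicit flux data. Three small repairs, none fatal: (1) invoking Fact \ref{thm-HO} to \emph{deduce} finite total curvature is logically backwards, since that fact assumes it; the correct one-liner is that $g$ extends meromorphically to $\overline{\mathbf{C}}$ with degree $2$, so by \eqref{eq-t-curvature} the total curvature equals $-4\pi\deg g=-8\pi$; (2) when $t=0$ the zero of $z^2+t$ is double, not simple, though regularity $(hdz)_0=2(g)_{\infty}$ still holds and the ramified value becomes $\infty=g(0)$; (3) to get $\nu_g$ exactly $5/2$ rather than $\geq 5/2$, add the remark that no other value is totally ramified, since $\deg g=2$ and the only critical points of $g$ are $0$ and $\infty$, so every other fiber meets $\Sigma$ in a simple point --- a line the paper itself also omits in Theorem \ref{M-thm-1}.
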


Theorem \ref{M-thm-0} shows that \eqref{eq-KKM} in Theorem \ref{thm-KKM} is sharp in the case $(G, k, d)=(0, 3, 2)$ because we have $\nu_{g}= 5/2$ and $2+(2/R) = 2+ \{2(0-1+(3/2))/2\} =5/2$.  

No other complete minimal surfaces of finite total curvature with $\nu_{g}= 5/2$ have been found before. However, the second author obtains a new complete minimal surface of finite total curvature with $\nu_{g}=5/2$. This is one of the main results in this paper. 

\begin{theorem}\label{M-thm-1}
Consider $\Sigma =\overline{\mathbf{C}} \setminus \{0,  \pm \mathrm{i}, \infty \}$ and 
\begin{equation}\label{moto1}
\left\{ \,
\begin{aligned}
& g(z) = \sigma \dfrac{(b-a)z^4 +4a(b-1)z^2 +4a(b-1)}{(b-a)z^4 +4(b-1)z^2 +4(b-1)}, \\
& hdz = \dfrac{\{(b-a)z^4 +4(b-1)z^2 +4(b-1)\}^{2}}{z^2(z^2+1)^{2}}\, dz,  \\
& \sigma^{2} = \dfrac{5a+11b-16}{16ab-11a-5b}.  
\end{aligned}
\right. 
\end{equation}
For any $a, b \in \mathbf{R}\setminus\{ 1 \}$ satisfying $a\not= b$ and $\sigma^{2}< 0 \,\, \text{i.e.}\,\, \sigma \in \mathrm{i}\mathbf{R}$, we obtain a complete minimal surface $X(\Sigma)\colon \Sigma \to \mathbf{R}^3$ whose Weierstrass data is given by \eqref{moto1}. 
In particular, $X(\Sigma)$ has finite total curvature and $\nu_{g}= 5/2$. 
\end{theorem}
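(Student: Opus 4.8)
The plan is to verify that the Weierstrass data \eqref{moto1} defines a complete minimal surface in $\mathbf{R}^3$ and to compute the omitted and totally ramified values of $g$ explicitly, following the template set by Theorem \ref{M-thm-0}. There are essentially four things to check: the regularity condition $(hdz)_{0}=2(g)_{\infty}$, the period condition, completeness, and finally the value-distribution count for $g$ that yields $\nu_{g}=5/2$. Since the punctures are at $\{0,\pm\mathrm{i},\infty\}$, the surface has $(G,k)=(0,4)$; by Fact \ref{thm-HO}(ii) the total curvature is automatically finite once $g$ and $hdz$ extend meromorphically to $\overline{\mathbf{C}}$, which they manifestly do as rational data.

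First I would check regularity: writing $P(z)=(b-a)z^4+4(b-1)z^2+4(b-1)$, the numerator and denominator of $g$ are $\sigma(\text{num})$ and $P(z)$, while $hdz=P(z)^2/[z^2(z^2+1)^2]\,dz$. The poles of $g$ occur at the zeros of $P$, and one verifies that the zero divisor of $hdz$ is exactly twice the polar divisor of $g$ by confirming that $P$ shares no common factor with $z^2(z^2+1)^2$ and that the quartic $P$ contributes a double zero of $hdz$ at each of its simple roots. This is a routine divisor computation. Completeness then follows from the standard criterion: one checks that the metric \eqref{eq-metric} diverges at each puncture $0,\pm\mathrm{i},\infty$, which amounts to inspecting the orders of the pole of $hdz$ against the behaviour of $(1+|g|^2)$ there; the factor $z^2(z^2+1)^2$ in the denominator of $hdz$ is designed precisely to force this divergence.

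The value count is where the structure of the data pays off. By direct evaluation, $g(\infty)=\sigma$ (the ratio of leading coefficients $(b-a)/(b-a)=1$ times $\sigma$), and $g(\pm\mathrm{i})$ and $g(0)$ are computed by substitution; the claim that these three points give two omitted values and one totally ramified value of order $2$ must be read off the way $g$ attains or fails to attain the corresponding values. Concretely, I would argue that $\sigma=g(\infty)$ and one of $g(\pm\mathrm{i})=g(0)$-type values are omitted (never attained on $\Sigma$), while the remaining value is attained only with even multiplicity, i.e. is totally ramified of order exactly $2$. The cleanest route is to count total multiplicity: $g$ has degree $d$ on $\overline{\mathbf{C}}$ (here one should compute $d$ from the degrees of numerator and denominator of $g$ after cancellation — I expect $d=4$), apply the Riemann--Hurwitz count as in Proposition \ref{prop-rational} to the punctured sphere, and match the omitted/ramified data against the branching. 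Since $\nu_{g}=1+1+(1-\tfrac12)=\tfrac52$, this is the same arithmetic as in Theorem \ref{M-thm-0}, and it should again saturate the bound $2+2/R$ of Theorem \ref{thm-KKM}, now in the case $(G,k,d)=(0,4,4)$, for which $2+2/R=2+2\cdot\tfrac{0-1+2}{4}=\tfrac52$.

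The main obstacle will be the period condition. Unlike the regularity and completeness checks, which are local and routine, the periods require evaluating $\mathrm{Re}\int_{c}\phi_{i}$ over the generators of $H_{1}(\Sigma,\mathbf{Z})$ — that is, small loops around each of the four punctures (three independent after the relation forced by the residue theorem). Each $\phi_{i}$ in \eqref{eq-phi} is a rational $1$-form, so its periods reduce to residues at the punctures, and the vanishing of the real parts becomes a system of algebraic constraints on $a,b$. I expect that the specific form of the coefficients (the combinations $4a(b-1)$, $4(b-1)$ and the leading $(b-a)$) and especially the prescribed value $\sigma^2=(5a+11b-16)/(16ab-11a-5b)$ are chosen precisely so that these residue conditions hold identically; the verification that $\sigma\in\mathrm{i}\mathbf{R}$ (i.e. $\sigma^2<0$) is exactly what makes $\mathrm{Re}\int\phi_i=0$ at the puncture $z=\infty$, whose residue is typically the purely imaginary quantity that the reality of the surface constrains. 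So the heart of the proof is to compute these residues, show the real-period equations are satisfied for all admissible $(a,b)$, and confirm that the admissibility region $\{a,b\in\mathbf{R}\setminus\{1\}:a\neq b,\ \sigma^2<0\}$ is nonempty.
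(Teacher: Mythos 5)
Your overall strategy --- check regularity, reduce the period condition to residues at the punctures, get completeness from the explicit line element, and then count values, noting that $(G,k,d)=(0,4,4)$ saturates the bound of Theorem \ref{thm-KKM} --- is exactly the route the paper takes. However, your sketch of the value count contains a concrete error. The puncture evaluations do \emph{not} produce ``two omitted values and one totally ramified value'': substituting $z^{2}=-1$ (so $z^{4}=1$) gives $g(\pm\mathrm{i})=\sigma=g(\infty)$, so the four punctures yield only \emph{two} distinct values. Writing $P(z)=(b-a)z^4+4(b-1)z^2+4(b-1)$, one has $g-\sigma=4\sigma(a-1)(b-1)(z^2+1)/P(z)$ and $g-\sigma a=\sigma(b-a)(1-a)z^4/P(z)$, so $g^{-1}(\sigma)=\{\mathrm{i},-\mathrm{i},\infty\}$ and $g^{-1}(\sigma a)=\{0\}$; both values are omitted on $\Sigma$ precisely because all their preimages are punctures. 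The totally ramified value is a \emph{third} value, $\sigma b$, attained at the interior points $\pm\sqrt{2}\mathrm{i}\in\Sigma$: indeed $g-\sigma b=\sigma(b-a)(1-b)(z^2+2)^{2}/P(z)$, so each root has multiplicity exactly $2$. Thus the ramified value cannot be ``read off'' from the punctures at all; your Riemann--Hurwitz bookkeeping ($2d-2=6=3+1+1+1$, with the $3$ at $z=0$, the first $1$ at $\infty$, and the last two at $\pm\sqrt{2}\mathrm{i}$) would eventually force you to these interior points, but the structural claim as you state it is false, and it is also this count that shows no further value is totally ramified, so that $\nu_{g}$ equals (not merely is at least) $5/2$.

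Your guess about the role of $\sigma\in\mathrm{i}\mathbf{R}$ is also misplaced. The paper takes the loops $c_{0},c_{\mathrm{i}},c_{-\mathrm{i}}$ as generators of $H_{1}(\Sigma,\mathbf{Z})$; no separate condition is imposed at $\infty$, and in fact none is available there, since all periods around $c_{0}$ vanish and the periods around $c_{\mathrm{i}}$ and $c_{-\mathrm{i}}$ are negatives of each other, so the residue theorem forces every $\phi_{j}$ to have \emph{zero} period around $\infty$ automatically, for any $\sigma$. The real content sits at $\pm\mathrm{i}$: the prescribed value $\sigma^{2}=(5a+11b-16)/(16ab-11a-5b)$ is what makes $\int_{c_{\pm\mathrm{i}}}\phi_{1}=0$; the reality of $a,b$ makes $\int_{c_{\pm\mathrm{i}}}\phi_{2}=\mp\frac{\pi}{2}\mathrm{i}(b-a)(5a+11b-16)$ purely imaginary; and $\sigma\in\mathrm{i}\mathbf{R}$ is needed so that $\int_{c_{\pm\mathrm{i}}}\phi_{3}=\mp\frac{\pi}{2}\sigma(b-a)(8ab-3a+3b-8)$ is purely imaginary. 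With these two points corrected, your plan coincides with the paper's proof.
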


\begin{figure}[tb]
\begin{center}
\includegraphics[scale=0.08]{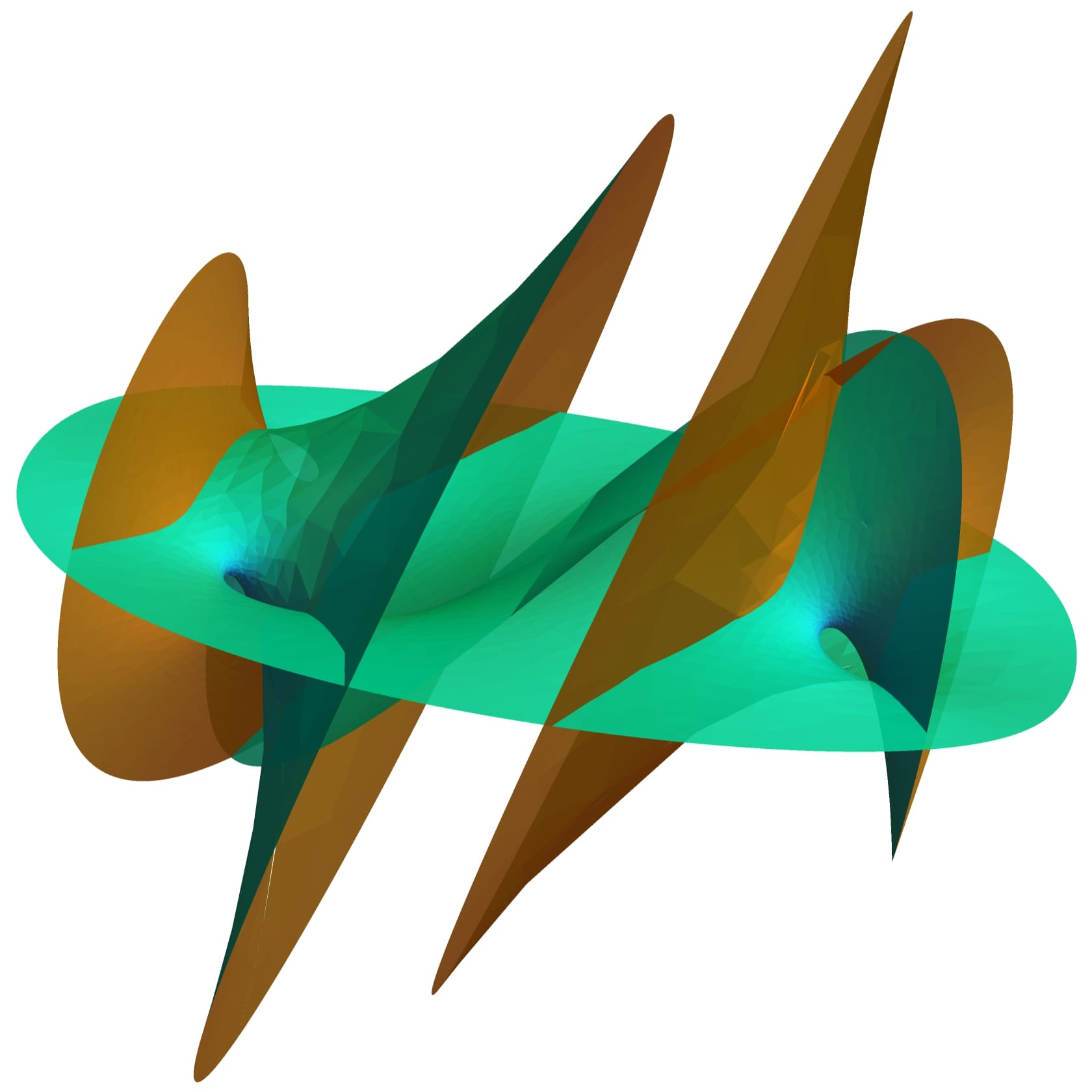}
\end{center}
\caption{Computer graphic of a minimal surface when $(a, b)=(0, 2)$ in Theorem \ref{M-thm-1}. 
This figure is drawn by Shoichi Fujimori.}
\label{fig:moto}
\end{figure}

\begin{proof}
We first prove that a minimal surface $X\colon \Sigma \to \mathbf{R}^3$ whose W-data is given by \eqref{moto1} can be constructed. We can easily verify that \eqref{moto1} satisfies the conformality and regularity conditions. So it is sufficient to prove that \eqref{moto1} satisfies the period condition.  
We denote small circles about $z=0, \mathrm{i}, -\mathrm{i}$ by $c_{0}, c_{\mathrm{i}}, c_{-\mathrm{i}}$, respectively. 
Then we obtain 
\[
\begin{aligned}
& \int_{c_{0}} \phi_{1} = \int_{c_{0}} \phi_{2} = \int_{c_{0}} \phi_{3} =0, \quad \int_{c_{\mathrm{i}}} \phi_{1} = - \int_{c_{-\mathrm{i}}} \phi_{1} = 0, \\
& \int_{c_{\mathrm{i}}} \phi_{2} = - \int_{c_{-\mathrm{i}}} \phi_{2} = -\dfrac{\pi}{2}\mathrm{i} (b-a)(5a+11b-16) \in \mathrm{i}\mathbf{R}, \\
& \int_{c_{\mathrm{i}}} \phi_{3} = - \int_{c_{-\mathrm{i}}} \phi_{3} = -\dfrac{\pi}{2}\sigma(b-a)(8ab-3a+3b-8) \in \mathrm{i}\mathbf{R}.   
\end{aligned}
\]
Since $H_{1}(\Sigma, \mathbf{Z}) =\mathbf{Z} \{ c_{0}, c_{\mathrm{i}}, c_{-\mathrm{i}} \}$, each 
$\phi_{i}\,(i=1, 2, 3)$ has no real periods on $\Sigma$, that is, \eqref{moto1} satisfies the period condition.  

We next prove that the surface $X (\Sigma)$ is complete. Since the line element $ds:=  |h|(1+|g|^2) |dz|$ of $X(\Sigma)$ 
can be represented as   
\[
\dfrac{|(b-a)z^4 +4(b-1)z^2 +4(b-1)|^2}{|z|^2 |z-\mathrm{i}|^2 |z+\mathrm{i}|^2} \left( 1+|\sigma|^2 \dfrac{|(b-a)z^4 +4a(b-1)z^2 +4a(b-1)|}{|(b-a)z^4 +4(b-1)z^2 +4(b-1)|} \right) |dz|, 
\]
we see that $\int_{\gamma} ds = +\infty$ for any divergent path $\gamma$ on $\Sigma$. 
We conclude that $X(\Sigma)$ is a complete minimal surface of finite total curvature in 
$\mathbf{R}^3$ and has $C(\Sigma) = -16 \pi$. 

Finally, we show $\nu_{g}=5/2$. The Gauss map $g$ has $2$ omitted values $\sigma\, (=g(\pm \mathrm{i}) =g(\infty))$  and $\sigma a\, (=g(0))$. Moreover $g$ has $1$ totally ramified value $\sigma b\, (=g(\pm \sqrt{2}\mathrm{i}))$ of order $2$. Hence $\nu_{g}= 1+1+(1 - 1/2) =5/2$. 
\end{proof}

Theorem \ref{M-thm-1} shows that \eqref{eq-KKM} in Theorem \ref{thm-KKM} is also sharp in the case $(G, k, d)=(0, 4, 4)$ because we have $\nu_{g}= 5/2$ and $2+(2/R) = 2+ \{2(0-1+2)/4\} =5/2$.  

However, we do not know that \eqref{eq-KKM} in Theorem \ref{thm-KKM} is sharp for every $(G, k, d)$. In \cite{KKM2008}, the following problem is given. 

\begin{problem}\label{con-KKM}
For every nonflat complete minimal surface of finite total curvature in $\mathbf{R}^3$, what is the best upper bound $\kappa$ of $\nu_{g}$?  
\end{problem}

By Theorems \ref{thm-KKM}, \ref{M-thm-0}, \ref{M-thm-1}, we have $5/2 \leq \kappa < 4$.   
If the genus $G$ is equal to $0$, we obtain the following more precise estimate.  

\begin{theorem}\label{thm-rami3}
Consider a nonflat complete minimal surface $X\colon \Sigma \to \mathbf{R}^3$ of genus $0$ of finite total curvature and its Gauss map $g\colon \Sigma \to \overline{\mathbf{C}}$. 
Then the number $D_{g}$ of omitted 
values and the total weight $\nu_{g}$ of the totally ramified values of $g$ satisfy 
\begin{equation}\label{eq-rami3}
D_{g} \leq \nu_{g} < 3. 
\end{equation}
In particular, we have $D_{g}\leq 2$. We thereby have $5/2\leq \kappa <3$ in the case of genus $0$ for Problem \ref{con-KKM}.  
\end{theorem}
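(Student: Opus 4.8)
The plan is to combine the estimate of Theorem~\ref{thm-KKM} with a bound on the number of ends. Setting $G=0$ in \eqref{eq-KKM}, the right-hand side becomes
\[
2+\frac{2}{R}=2+\frac{2(G-1+k/2)}{d}=2+\frac{k-2}{d},
\]
so the whole problem reduces to showing that $(k-2)/d<1$, i.e.\ that the number $k$ of ends satisfies $k\le d+1$. Thus the only genuinely geometric input beyond Theorem~\ref{thm-KKM} is an inequality relating the number of ends to the degree of the Gauss map.

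First I would fix the setup via Fact~\ref{thm-HO}: since $G=0$, the surface is conformally $\Sigma=\overline{\mathbf{C}}\setminus\{p_1,\dots,p_k\}$, the Weierstrass data $(g,hdz)$ extends meromorphically to $\overline{\mathbf{C}}$, the Gauss map $g$ is a rational function of degree $d$, and $C(\Sigma)=-4\pi d$. I would also record that, because the surface is nonflat, $g$ is nonconstant and so $d\ge1$; hence $(d-1)/d<1$ holds strictly for every finite $d$.

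The key step is to prove $k\le d+1$. For this I would use that a complete, properly immersed end of finite total curvature has a well-defined multiplicity $r_j\ge1$, namely one less than the maximal pole order of the $\phi_i$ of \eqref{eq-phi} at $p_j$; concretely $r_j+1=2\max\{0,-\mathrm{ord}_{p_j}(g)\}-\mathrm{ord}_{p_j}(hdz)$. Summing these local contributions and using that $\mathrm{div}(hdz)$ has degree $-2$ on $\overline{\mathbf{C}}$ together with $\deg(g)=d$ yields the Jorge--Meeks relation $\sum_{j=1}^{k} r_j=2d+2-k$ (equivalently $d\ge G+k-1$ for general genus). Since each $r_j\ge1$ we get $k\le\sum_j r_j=2d+2-k$, whence $k\le d+1$. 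I expect this step---specifically the justification that every end contributes $r_j\ge1$ (properness of finite-total-curvature ends, so the image genuinely diverges at each puncture)---to be the main obstacle, while the divisor bookkeeping is routine.

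Finally I would combine the two parts: $2/R=(k-2)/d\le(d-1)/d<1$, so by Theorem~\ref{thm-KKM},
\[
D_g\le\nu_g\le 2+\frac{2}{R}\le 2+\frac{d-1}{d}<3,
\]
which is \eqref{eq-rami3}; since $D_g$ is a nonnegative integer with $D_g<3$, this forces $D_g\le2$. Together with the example of Theorem~\ref{M-thm-0} (or Theorem~\ref{M-thm-1}), which realizes $\nu_g=5/2$ in genus $0$, one obtains $5/2\le\kappa<3$ for the genus-$0$ case of Problem~\ref{con-KKM}. As an alternative to invoking Theorem~\ref{thm-KKM}, one could re-derive $\nu_g\le2+(k-2)/d$ directly in the spirit of Proposition~\ref{prop-rational}, applying the Riemann--Hurwitz count to the rational map $g\colon\overline{\mathbf{C}}\to\overline{\mathbf{C}}$ and allowing the preimages of the omitted values to lie among the $k$ punctures, so that $\nu_g\le(n_g+k)/d=(2(d-1)+k)/d$.
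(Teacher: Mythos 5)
Your proposal is correct and takes essentially the same route as the paper: the paper likewise reduces everything to the inequality $k\leq d+1$, which it proves by observing that (after a rotation making the poles of $g$ simple and disjoint from the ends) $hdz$ has $2d$ zeros and a pole of order $\mu_{j}\geq 2$ at each end, so that the degree $2G-2=-2$ of its divisor on $\overline{\mathbf{C}}$ gives $d\geq k-1$ --- exactly your Jorge--Meeks bookkeeping with $r_{j}=\mu_{j}-1$ --- and then substitutes into Theorem \ref{thm-KKM}. The one point to tighten: ``properness/divergence at the puncture'' by itself only yields a pole of $hdz$ at each end (i.e.\ $r_{j}\geq 0$); the inequality $\mu_{j}\geq 2$ (i.e.\ $r_{j}\geq 1$) comes from the period condition (Osserman's residue argument, which the paper cites), so that is the fact you should invoke at your acknowledged ``main obstacle.''
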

\begin{proof}
By a suitable rotation of the surface in $\mathbf{R}^3$, we may assume that $g$ has neither zero nor pole at $p_{j}$ and that the poles of $g$ are simple. 
The simple poles of $g$ coincide with the double zeros of $hdz$ because the surface satisfies the regularity condition. By the completeness of the surface, $hdz$ has a pole at each $p_{j}$ (\cite{Ma1963}, \cite[Lemma 9.6]{Os1986}, \cite{UY2011}). Since the surface also satisfies the period condition,  
$hdz$ has a pole of order ${\mu}_{j}\geq 2$ at $p_{j}$ (\cite{Os1964}). 
Applying the Riemann-Roch theorem to the meromorphic differential $hdz$ on $\overline{\Sigma}_{G}$, 
we obtain 
\[
\displaystyle 2d-\sum_{j=1}^{k}{\mu}_{j} = 2G -2.  
\]
Thus, for $G=0$, we have
\[
\displaystyle d= 0 -1+\dfrac{1}{2}\sum_{j=1}^{k}{\mu}_{j}\geq k-1,  
\]
that is, $(k-1)/d\leq 1$. By Theorem \ref{thm-KKM}, we have 
\[
D_{g} \leq \nu_{g} \leq 2+\dfrac{2}{R} = 2+\dfrac{k-2}{d} = 2+\dfrac{k-1}{d} -\dfrac{1}{d} \leq 3-\dfrac{1}{d} <3
\]
because $d\geq 1$ holds for a nonflat complete minimal surface of finite total curvature.  
\end{proof}

\section{The Gauss images of complete minimal surfaces \\ of finite total curvature in $\mathbf{R}^4$}\label{sec-R4}

We review some basic facts about complete minimal surfaces in $\mathbf{R}^4$. For more details, 
see \cite{Ch1982, Ch1965, Fu1993, HK2018, HO1980, HO1985, Os1964}. 
Let $X=(x^1, x^2, x^3, x^4)\colon \Sigma \to \mathbf{R}^4$ be an oriented minimal surface.  
By associating a local complex coordinate $z=u+\mathrm{i}v$ with each positive isothermal coordinate $(u, v)$, $\Sigma$ is considered as a Riemann surface whose conformal 
metric is the induced metric $ds^2$ from $\mathbf{R}^4$. Then \eqref{eq-Lap} holds, that is, each $x^i\, (i=1, 2, 3, 4)$ is harmonic. 
With respect to the local complex coordinate $z=u+\mathrm{i}v$ of the surface, \eqref{eq-Lap} is given by 
\eqref{eq-partial}. Thus each $\phi_{i}:= \partial x^{i}\,dz\, (i=1, 2, 3, 4)$ is a holomorphic differential on $\Sigma$. 
These satisfy the conformality condition, the regularity condition and the period condition as in the case of $\mathbf{R}^3$. We recover $X\colon \Sigma \to \mathbf{R}^4$ by  
\begin{equation}\label{eq-integral4}
X(z) =\displaystyle \mathrm{Re} \left( \int^{z}_{z_{0}} 2\phi_{1}, \int^{z}_{z_{0}} 2\phi_{2}, \int^{z}_{z_{0}} 2\phi_{3}, \int^{z}_{z_{0}} 2\phi_{4}  \right)
\end{equation}
up to translation. Here $z_{0}$ is a fixed point of $\Sigma$. If we set 
\begin{equation}\label{eq-W-data4}
g_{1}= \dfrac{\phi_{3}+\mathrm{i}\phi_{4}}{\phi_{1}-\mathrm{i}\phi_{2}}, \quad g_{2}= \dfrac{-\phi_{3}+\mathrm{i}\phi_{4}}{\phi_{1}-\mathrm{i}\phi_{2}},\quad hdz = \phi_{1} -\mathrm{i}\phi_{2}, 
\end{equation}
then $g_{1}$ and $g_{2}$ are meromorphic functions on $\Sigma$ and $hdz$ is a holomorphic differential on $\Sigma$.
Moreover the holomorphic map $g:= (g_{1}, g_{2}) \colon \Sigma \to \overline{\mathbf{C}}\times \overline{\mathbf{C}}$ 
coincides with the Gauss map of $X(\Sigma)$. We remark that the Gauss map of $X(\Sigma)$ in $\mathbf{R}^4$ is the map from each point of $\Sigma$ to its oriented tangent plane, the set of all oriented (tangent) planes in $\mathbf{R}^{4}$ is naturally identified with the quadric 
\[
\mathbf{Q}^{2} =\{[w^{1}: w^{2}: w^{3}: w^{4}] \in \mathbf{C}\mathbf{P}^{3} \, ;\, (w^{1})^{2}+\cdots +(w^{4})^{2} = 0\}
\]
in $\mathbf{C}\mathbf{P}^{3}$ and $\mathbf{Q}^{2}$ is biholomorphic to the product of the Riemann spheres $\overline{\mathbf{C}}\times \overline{\mathbf{C}}$. For the meromorphic functions 
$g_{1}, g_{2}$ and the holomorphic differential $hdz$ given by \eqref{eq-W-data4}, we have 
\begin{equation}\label{eq-phi4}
\phi_{1}= \dfrac{1}{2}(1+g_{1}g_{2})\,hdz, \,\, \phi_{2}=\dfrac{\mathrm{i}}{2}(1-g_{1}g_{2})\,hdz, \,\,
\phi_{3}= \dfrac{1}{2}(g_{1}-g_{2})\,hdz, \,\, \phi_{4}= -\dfrac{\mathrm{i}}{2}(g_{1}+g_{2})\,hdz. 
\end{equation}  
We call $(g_{1}, g_{2}, hdz)$ the {\it Weierstrass data} ({\it W-data}, for short). Conversely, if we are given the W-data $(g_{1}, g_{2}, hdz)$ on $\Sigma$, we obtain $\phi_{1}, \phi_{2}, \phi_{3}, \phi_{4}$ by \eqref{eq-phi4}. They satisfy the conformality condition automatically, and the regularity condition is interpreted as $(hdz)_{0} = (g_{1})_{\infty} + (g_{2})_{\infty}$, where $(hdz)_{0}$ is the zero divisor of $hdz$ and 
$(g_{i})_{\infty}$ is the polar divisor of $g_{i}$ ($i=1, 2$). This is because the induced metric on 
$\Sigma$ is given by 
\begin{equation}\label{eq-metric4}
ds^2 =|h|^2(1+|g_{1}|^2)(1+|g_{2}|^2)|dz|^2. 
\end{equation} 
In a minimal surface in $\mathbf{R}^4$, it is also not so hard to find a holomorphic differential $hdz$ satisfying the regularity condition for given two meromorphic functions $g_{1}$ and $g_{2}$ on $\Sigma$, but the period condition always causes trouble. In addition, a minimal surface in $\mathbf{R}^4$ is said to be {\it complete} if all divergent paths have infinite length with respect to the metric given by \eqref{eq-metric4}. 

The Gauss curvature of $X(\Sigma)$ is given by 
\begin{equation}\label{eq-G-curvature4}
K = -\dfrac{2}{(1+|g_{1}|^2)(1+|g_{2}|^2)|h|^2} \left( \dfrac{|g'_{1}|^2}{(1+|g_{1}|^2)^2} + \dfrac{|g'_{2}|^2}{(1+|g_{2}|^2)^2}  \right), 
\end{equation}
where $g'_{1}=dg_{1}/dz, g'_{2}=dg_{2}/dz$. Moreover the total curvature of $X(\Sigma)$ is given by 
\begin{equation}\label{eq-t-curvature4}
C (\Sigma) := \displaystyle \int_{\Sigma} K\,dA 
= -\int_{\Sigma} \left( \dfrac{2|g'_{1}|^2}{(1+|g_{1}|^2)^2} + \dfrac{2|g'_{2}|^2}{(1+|g_{2}|^2)^2}  \right) du\wedge dv, \quad z=u+\mathrm{i}v, 
\end{equation}     
where $dA$ is the area element with respect to the metric \eqref{eq-metric4}. 

Fujimoto also proved the following effective estimate for the total weight of the totally ramified values of 
the Gauss map of complete minimal surfaces in $\mathbf{R}^4$. 

\begin{fact}\cite[Theorem I\hspace{-1.2pt}I\hspace{-1.2pt}I]{Fu1989}\label{fact-4-Fujimoto}
Consider a nonflat complete minimal surface $X\colon \Sigma \to \mathbf{R}^4$ and its 
Gauss map $g=(g_{1}, g_{2})$. 
Let $\nu_{g_{i}}$ be the total weight of the totally ramified values of $g_{i}\, (i=1, 2)$. 
\begin{enumerate}
\item[$\rm(\hspace{.18em}i\hspace{.18em})$] Assume that $g_{1}$ and $g_{2}$ are both nonconstant. If $\nu_{g_{1}}> 2$ 
and $\nu_{g_{2}}> 2$, then we have 
\[
\dfrac{1}{\nu_{g_{1}}-2}+\dfrac{1}{\nu_{g_{2}}-2} \geq 1. 
\] 
\item[$\rm(\hspace{.08em}ii\hspace{.08em})$] If either $g_{1}$ or $g_{2}$, say $g_{2}$, is constant, then we have
\[
\nu_{g_{1}} \leq 3. 
\]
\end{enumerate} 	
\end{fact}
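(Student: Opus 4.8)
The plan is to argue by contradiction via Fujimoto's curvature method, working on the universal cover $\widetilde{\Sigma}$. Suppose $X$ is a complete nonflat minimal surface satisfying the hypotheses. For part (i), it is convenient to set $\gamma_i := \nu_{g_i}-2 > 0$, so that the assertion becomes: if $\tfrac{1}{\gamma_1}+\tfrac{1}{\gamma_2}<1$, then no such complete nonflat surface can exist. The overall goal is to manufacture, from the Weierstrass data $(g_1,g_2,hdz)$ and the totally ramified values, a conformal metric $d\tau^2$ on $\Sigma$ whose Gaussian curvature is bounded above by a strictly negative constant away from a discrete singular set, and which is nevertheless \emph{complete}. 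This is the forbidden configuration: by the Ahlfors--Schwarz lemma, a complete surface carrying a metric of curvature $\le -1$ off a discrete set is covered by the disc and its length estimates are incompatible with completeness, yielding the contradiction.

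The construction proceeds as follows. Let $\{\alpha_j\}$ (orders $\nu_{1j}$) and $\{\beta_k\}$ (orders $\nu_{2k}$) be the totally ramified values of $g_1$ and $g_2$, counting omitted values with order $\infty$; values equal to $\infty$ are handled by reading $|g_i-\alpha_j|$ in the Fubini--Study (chordal) sense. On each coordinate chart one considers a metric of the form
\[
d\tau^2 = \left(\frac{|g_1'|}{\prod_j (1+|g_1|^2)\,|g_1-\alpha_j|^{\,1-1/\nu_{1j}}}\right)^{2\lambda_1}\left(\frac{|g_2'|}{\prod_k (1+|g_2|^2)\,|g_2-\beta_k|^{\,1-1/\nu_{2k}}}\right)^{2\lambda_2}|dz|^2,
\]
with exponents $\lambda_1,\lambda_2>0$ to be chosen. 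The exponents $1-1/\nu$ are exactly those that render the singularities at the ramification points non-complete from the interior, so any completeness of $d\tau^2$ can only come from divergent paths, and using completeness of $ds^2$ one shows every divergent path also has infinite $d\tau$-length. A direct curvature computation for such product metrics shows that $d\tau^2$ has curvature $\le -1$ off its singular set precisely when $\lambda_1\gamma_1$ and $\lambda_2\gamma_2$ are large enough, while the homogeneity matching needed for the completeness comparison forces a normalization such as $\lambda_1+\lambda_2=1$. A short optimization over admissible $(\lambda_1,\lambda_2)$ shows that both requirements can be met simultaneously exactly when $\tfrac{1}{\gamma_1}+\tfrac{1}{\gamma_2}<1$; hence in that regime one produces the forbidden complete negatively curved metric, and the contradiction proves (i).

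For part (ii), if $g_2\equiv c$ is constant, the induced metric \eqref{eq-metric4} collapses, up to the positive constant factor $1+|c|^2$, to $|h|^2(1+|g_1|^2)|dz|^2$. This has the same shape as an $\mathbf{R}^3$ metric but with the exponent $1$ on $(1+|g_1|^2)$ in place of the exponent $2$ appearing in \eqref{eq-metric}. Running the one-function version of the construction above with this exponent, the threshold at which the forbidden complete metric of negative curvature can be built shifts from $\nu_{g}>4$ (the $\mathbf{R}^3$ value behind Fact \ref{thm-rami2}) to $\nu_{g_1}> 1+2 = 3$, giving $\nu_{g_1}\le 3$. In other words, the bound is governed by ``exponent $+\,2$,'' which equals $4$ in $\mathbf{R}^3$ and $3$ in this degenerate $\mathbf{R}^4$ case.

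The main obstacle is the analytic core rather than the numerology. Two points require genuine work: first, verifying that the product metric $d\tau^2$ really has Gaussian curvature bounded above by a negative constant off the ramification set, which rests on the sub-mean-value and potential estimates that constitute Fujimoto's key lemma on conformal metrics; and second, justifying completeness of $d\tau^2$ near the ends, where the exact orders of $g_1'$, $g_2'$, and $h$ must be tracked and where omitted values (order $\infty$) must be treated as a limiting case of the exponents $1-1/\nu$. The optimization identifying the feasibility condition $\tfrac{1}{\gamma_1}+\tfrac{1}{\gamma_2}\ge 1$ is elementary once the curvature and completeness inequalities are in hand, so the difficulty is concentrated in making the constructed metric \emph{genuinely} negatively curved and complete, after which the Ahlfors--Schwarz lemma closes the argument.
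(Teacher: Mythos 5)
The paper does not actually prove this statement---it is quoted as a Fact from Fujimoto's paper \cite{Fu1989}---so your proposal has to be measured against Fujimoto's own argument. Your numerology is right (the weights $1-1/\nu$, the feasibility threshold $1/\gamma_1+1/\gamma_2<1$, and the ``exponent plus two'' explanation of why the bound drops from $4$ to $3$ in part (ii)), but the logical engine you build on is false. A complete conformal metric of curvature $\leq -1$ off a discrete set is \emph{not} a forbidden configuration: the Poincar\'e metric of the disc, or the complete hyperbolic metric of any hyperbolic Riemann surface, is exactly such a metric, so the Ahlfors--Schwarz lemma yields no contradiction from ``complete plus negatively curved'' alone. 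What Ahlfors--Schwarz forbids is a nonzero metric of curvature $\leq -1$ on a surface whose universal cover is parabolic (conformally $\mathbf{C}$); but whether the universal cover of $\Sigma$ is $\mathbf{C}$ or the disc is precisely what one does not know for a complete minimal surface (it is often the disc, e.g.\ for the Voss surface), and this is the difficulty Fujimoto's method is designed to circumvent. In \cite{Fu1988, Fu1989} the negatively curved metric appears only as an auxiliary object on coordinate discs, where Ahlfors--Schwarz gives a pointwise Schwarz-type estimate whose right-hand side is a power $\rho<1$ of the Poincar\'e density; the global object is a \emph{flat} metric of the shape $|h|^{2/(1-p)}\bigl(\prod_j |g-\alpha_j|^{1-1/\nu_j-\epsilon}/|g'|\bigr)^{2p/(1-p)}|dz|^2$ (with a two-function analogue in $\mathbf{R}^4$, the exponents being available exactly when $1/\gamma_1+1/\gamma_2<1$), and the contradiction comes from a developing-map/maximal-disc argument which, using the pointwise estimate and the fact that $\rho<1$, produces a divergent path of finite $ds^2$-length, contradicting completeness of the surface.

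Second, and relatedly, your completeness-transfer step fails. Your $d\tau^2$ is manufactured from $g_1$, $g_2$ and their derivatives alone and contains no factor of $h$, whereas $ds^2=|h|^2(1+|g_1|^2)(1+|g_2|^2)|dz|^2$. Completeness of $ds^2$ therefore places no lower bound on the $d\tau$-length of a divergent path: at a catenoid-type end the Gauss maps extend meromorphically (so your integrand stays bounded there) and completeness is carried entirely by the pole of $hdz$, so divergent paths can have finite $d\tau$-length. In effect your argument never uses completeness of the surface in an essential way, and any such argument must be wrong: restricted to a small disc, a minimal surface can have an arbitrary pair $(g_1,g_2)$ as Gauss data, with as many omitted values as one wishes. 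The repair is not a refinement of your optimization over $(\lambda_1,\lambda_2)$ but the replacement of the engine by Fujimoto's: the factor $|h|$ must enter the auxiliary metric so that the construction sees $ds^2$, and the contradiction must be the finite-length divergent path, not an appeal to a nonexistent ``complete and negatively curved'' absurdity.
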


Fujimoto also obtained the same estimate \cite[Theorem I\hspace{-1.2pt}I]{Fu1988} for the number of omitted values of the Gauss map of complete minimal surfaces in $\mathbf{R}^4$. These results are proved by using complex analytic methods. Examples 4.4 and 4.5 in \cite{MO1990} show 
that Fact \ref{fact-4-Fujimoto} is optimal. For a geometric interpretation of these estimates, see \cite{AAIK2017}. 

We next study the Gauss images of complete minimal surfaces of finite total curvature in $\mathbf{R}^4$. 
We also obtained the following result (\cite{HO1980, Hu1957, Os1964}).  

\begin{fact}[Huber-Osserman]\label{thm-HO4}
A complete minimal surface $X\colon \Sigma \to \mathbf{R}^4$ of finite total curvature 
satisfies: 
\begin{enumerate}
\item[$\rm(\hspace{.18em}i\hspace{.18em})$] $\Sigma$ is conformally equivalent to $\overline{\Sigma}_{G}\backslash \{ p_{1}, \ldots, p_{k} \}$, where $\overline{\Sigma}_{G}$ is a closed Riemann surface of genus $G$ and $p_{1}, \ldots, p_{k} \in \overline{\Sigma}_{G}$. Then we also call the number $G$ the genus of $X(\Sigma)$.  
\item[$\rm(\hspace{.18em}ii\hspace{.18em})$] The Weierstrass data $(g_{1}, g_{2}, hdz)$ can be extended meromorphically to $\overline{\Sigma}_{G}$.  In particular, if $d_{i}$ is the degree of 
$g_{i}\colon \overline{\Sigma}_{G} \to \overline{\mathbf{C}}$, $C(\Sigma) =-2\pi (d_{1}+d_{2})$ holds. 
\end{enumerate}
\end{fact}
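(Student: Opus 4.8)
The plan is to transcribe the classical Huber--Osserman argument already recorded for $\mathbf{R}^3$ (Fact \ref{thm-HO}) into the four-dimensional setting, treating the pair $(g_1, g_2)$ as a single holomorphic map into the quadric $\mathbf{Q}^2 \cong \overline{\mathbf{C}} \times \overline{\mathbf{C}}$. For part (i) I would first observe that the integrand in \eqref{eq-t-curvature4} is pointwise nonpositive, so finite total curvature is equivalent to $\int_{\Sigma} |K|\, dA < \infty$. Since $X$ is complete, Huber's theorem then applies verbatim: a complete Riemannian surface whose Gauss curvature is absolutely integrable is conformally a closed Riemann surface $\overline{\Sigma}_{G}$ with finitely many points $p_{1}, \ldots, p_{k}$ removed. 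This step is insensitive to the codimension, so no new idea is needed beyond quoting Huber.

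For part (ii) the first task is the meromorphic extension of the Weierstrass data across each puncture. I would argue locally: identify a neighbourhood of $p_{j}$ in $\overline{\Sigma}_{G}$ with a disk so that $\Sigma$ meets it in a punctured disk $\Delta^{*} = \{\, 0 < |z| < 1 \,\}$. By \eqref{eq-t-curvature4} each of the two nonnegative integrals $\int_{\Delta^{*}} \frac{|g_{i}'|^2}{(1+|g_{i}|^2)^2}\, du \wedge dv$ is finite, and this quantity is precisely the spherical area (counted with multiplicity) of the image of $g_{i}|_{\Delta^{*}}$. Osserman's removable-singularity lemma then guarantees that a holomorphic map $g_{i} \colon \Delta^{*} \to \overline{\mathbf{C}}$ of finite spherical area extends holomorphically across $0$ as a map to $\overline{\mathbf{C}}$. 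Applying this at every puncture shows that $g_{1}$ and $g_{2}$ extend to meromorphic functions on $\overline{\Sigma}_{G}$ of finite degrees $d_{1}, d_{2}$. The extension of $hdz$ then follows from the regularity relation $(hdz)_{0} = (g_{1})_{\infty} + (g_{2})_{\infty}$ together with the growth control coming from completeness of the metric \eqref{eq-metric4} near the ends, exactly as in the $\mathbf{R}^3$ case: $hdz$ is holomorphic on $\Sigma$ and extends to a meromorphic differential on $\overline{\Sigma}_{G}$.

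With the data extended, the curvature identity is a degree computation carried out term by term. On the compact surface $\overline{\Sigma}_{G}$ each meromorphic function $g_{i}$ pulls back the Fubini--Study area form of $\overline{\mathbf{C}}$, whose total mass is $\pi$; by the argument principle the pulled-back mass equals the degree times this area, so $\int_{\Sigma} \frac{|g_{i}'|^2}{(1+|g_{i}|^2)^2}\, du\wedge dv = \pi d_{i}$. Substituting into \eqref{eq-t-curvature4} gives
\[
C(\Sigma) = -\int_{\Sigma} \frac{2|g_{1}'|^2}{(1+|g_{1}|^2)^2}\, du\wedge dv - \int_{\Sigma} \frac{2|g_{2}'|^2}{(1+|g_{2}|^2)^2}\, du\wedge dv = -2\pi(d_{1} + d_{2}),
\]
as claimed; this also re-confirms that $C(\Sigma)$ is an integer multiple of $-2\pi$.

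The hard part will be the removable-singularity step in part (ii). The delicate point is to rule out an essential singularity of each $g_{i}$ at a puncture purely from the finiteness of its image area, which is precisely the content of Osserman's lemma; transplanting it here requires only that each component $g_{i}$ \emph{separately} have finite spherical area, and \eqref{eq-t-curvature4} supplies this because the two summands are individually integrable. Once the meromorphic extension of $(g_{1}, g_{2}, hdz)$ is in hand, everything else is routine: part (i) is Huber's theorem applied to the absolutely integrable curvature, and the total-curvature formula is the degree identity applied to each Gauss-map component.
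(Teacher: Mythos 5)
The paper offers no proof of this statement at all --- it is labeled a \emph{Fact} and quoted from the cited sources \cite{Hu1957, Os1964, HO1980} --- and your reconstruction is exactly the classical argument of those sources (Huber's theorem for (i); finiteness of each spherical-area integral in \eqref{eq-t-curvature4} plus Osserman's removable-singularity lemma to extend $g_{1}, g_{2}$; completeness to extend $hdz$; the degree count $\int_{\Sigma}|g_{i}'|^2(1+|g_{i}|^2)^{-2}\,du\wedge dv=\pi d_{i}$ for the curvature identity), so it is correct and consistent with the paper's treatment. The only point to sharpen: the meromorphic extension of $hdz$ across a puncture is carried entirely by the completeness lemma --- a holomorphic function on a punctured disk whose associated conformal metric is complete at the puncture has at most a pole there (MacLane \cite{Ma1963}, \cite[Lemma 9.6]{Os1986}, \cite{UY2011}, the same references this paper invokes at the corresponding step in the proofs of Theorems \ref{thm-rami3} and \ref{thm-R4-M2}) --- whereas the regularity relation $(hdz)_{0}=(g_{1})_{\infty}+(g_{2})_{\infty}$ only locates the zeros of $h$ inside $\Sigma$ and contributes nothing to ruling out an essential singularity at $p_{j}$.
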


The first author obtained the following estimate by geometric quantities for the total weight of the totally ramified values of the Gauss map of complete minimal surfaces of finite total curvature in $\mathbf{R}^4$. 

\begin{theorem}\cite[Theorem 3.2]{Ka2009}\label{R4-fact-1}
Let $X\colon \Sigma = \overline{\Sigma}_{G} \backslash \{ p_{1}, \ldots, p_{k} \} \to \mathbf{R}^4$ be a nonflat complete minimal surface of finite total curvature, $g=(g_{1}, g_{2})\colon \Sigma \to \overline{\mathbf{C}}\times \overline{\mathbf{C}}$ be its Gauss map, $d_{i}$ be the degree of $g_{i}\colon \overline{\Sigma}_{G} \to \overline{\mathbf{C}}$ and 
$\nu_{g_{i}}$ be the total weight of the totally ramified values of $g_{i}$ ($i=1, 2$).  
\begin{enumerate}
\item[$\rm(\hspace{.18em}i\hspace{.18em})$] Assume that $g_{1}$ and $g_{2}$ are both nonconstant. 
If $\nu_{g_{1}}> 2$ and $\nu_{g_{2}}> 2$, then we have
\begin{equation}\label{eq-4-Kaw-1}
\dfrac{1}{\nu_{g_{1}}-2} + \dfrac{1}{\nu_{g_{2}}-2} \geq R_{1}+R_{2}, \quad R_{i} =\dfrac{d_{i}}{2G -2+k}\, (i=1, 2),  
\end{equation}
and $R_{1}+R_{2}> 1$ holds. We thus obtain  
\begin{equation}\label{eq-4-Kaw-2}
\dfrac{1}{\nu_{g_{1}}-2} + \dfrac{1}{\nu_{g_{2}}-2} >1. 
\end{equation} 
\item[$\rm(\hspace{.08em}ii\hspace{.08em})$] If either $g_{1}$ or $g_{2}$, say $g_{2}$, is constant, 
then we have 
\begin{equation}\label{eq-4-Kaw-3}
\nu_{g_{1}} \leq 2+\dfrac{1}{R_{1}}, \quad \dfrac{1}{R_{1}} = \dfrac{2G -2+k}{d_{1}},   
\end{equation}
and $1/R_{1}<1$. We thus obtain $\nu_{g_{1}}< 3$. 
\end{enumerate}   
\end{theorem}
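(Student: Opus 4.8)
The plan is to deduce both parts from a single ramification estimate for each nonconstant Gauss map component, obtained by the algebraic (Riemann--Hurwitz) method used in Proposition \ref{prop-rational}, but now carried out on the closed surface $\overline{\Sigma}_G$. By Fact \ref{thm-HO4} each $g_i$ extends to a meromorphic function $g_i\colon \overline{\Sigma}_G \to \overline{\mathbf{C}}$ of degree $d_i$, and $hdz$, holomorphic on $\Sigma$, extends to a meromorphic one-form on $\overline{\Sigma}_G$. As in the proof of Theorem \ref{thm-rami3}, after a suitable rotation of the surface in $\mathbf{R}^4$ I may assume that neither $g_1$ nor $g_2$ has a zero or pole at any end $p_j$, and that the only poles of $hdz$ are at the ends, of order $\mu_j \geq 2$ (this last fact coming from completeness together with the period condition).

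The core step, and the place I expect the real difficulty, is to prove that for every nonconstant $g_i$ one has $\nu_{g_i} \leq 2 + (2G-2+k)/d_i = 2 + 1/R_i$. I would prove this by counting branching. Write $n_{g_i}=\sum_p (e_p(g_i)-1)$ for the total ramification of $g_i$; Riemann--Hurwitz on $g_i\colon \overline{\Sigma}_G \to \overline{\mathbf{C}}$ gives $n_{g_i}=2(d_i+G-1)$. Fix the totally ramified values $\alpha_1,\dots,\alpha_r$ of $g_i$ on $\Sigma$, of orders $\nu_1,\dots,\nu_r$ (an omitted value counting as $\nu_j=\infty$). For a fixed $\alpha_j$, split the fibre $g_i^{-1}(\alpha_j)\subset\overline{\Sigma}_G$ into its $n_j$ points in $\Sigma$ (each of multiplicity $\geq\nu_j$, total multiplicity $s_j\geq n_j\nu_j$) and its $q_j$ points among the ends (total multiplicity $t_j$), so that $s_j+t_j=d_i$ and the branching over $\alpha_j$ is $d_i-(n_j+q_j)$. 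The delicate point is that the ends supply preimages that need not be ramified, so one cannot simply discard them; instead, using $n_j\leq s_j/\nu_j=(d_i-t_j)/\nu_j$, I get
\[
d_i-(n_j+q_j)\geq d_i\Bigl(1-\tfrac{1}{\nu_j}\Bigr)+\frac{t_j}{\nu_j}-q_j .
\]
Summing over $j$, noting that the fibres are disjoint so the left-hand sides total at most $n_{g_i}$, that $t_j/\nu_j\geq 0$, and that each end lies over at most one $\alpha_j$ so $\sum_j q_j\leq k$, yields $d_i\,\nu_{g_i}-k\leq 2(d_i+G-1)$, which is the claimed estimate. (The case $\nu_j=\infty$ is consistent, since an omitted value is attained only at the ends, forcing $n_j=0$ and $t_j=d_i$.)

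Granting this estimate, both parts follow quickly. For part (ii), with $g_2$ constant, applying the estimate to $g_1$ gives exactly \eqref{eq-4-Kaw-3}; the strict bound $1/R_1<1$ comes from $hdz$: since $g_2$ is constant the regularity condition reads $(hdz)_0=(g_1)_\infty$, so $\deg(hdz)=2G-2$ gives $d_1-\sum_j\mu_j=2G-2$, and $\mu_j\geq 2$ together with $k\geq 1$ forces $d_1\geq 2G-2+2k>2G-2+k$, whence $\nu_{g_1}<3$. For part (i), the estimate gives $\nu_{g_i}-2\leq 1/R_i$ for $i=1,2$; since $\nu_{g_i}>2$ the right side $1/R_i$ is positive, so I may invert to obtain $1/(\nu_{g_i}-2)\geq R_i$, and summing gives \eqref{eq-4-Kaw-1}. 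Finally $R_1+R_2=(d_1+d_2)/(2G-2+k)>1$ follows from the same one-form computation, now with the full regularity condition $(hdz)_0=(g_1)_\infty+(g_2)_\infty$, which yields $d_1+d_2=2G-2+\sum_j\mu_j\geq 2G-2+2k$, establishing \eqref{eq-4-Kaw-2}.
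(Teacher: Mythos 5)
Your proposal is correct, but note that the paper itself never proves Theorem \ref{R4-fact-1}: it is quoted from \cite[Theorem 3.2]{Ka2009} with only the remark that it is proved ``by the methods of complex algebraic geometry,'' so there is no in-paper proof to compare against line by line. What you have written is a sound self-contained reconstruction in exactly that spirit, and it uses the same toolkit the paper deploys for its own adjacent results: the fibre-counting of Proposition \ref{prop-rational} globalized to $\overline{\Sigma}_{G}$ via Riemann--Hurwitz, together with the standard normalizations and facts (a rotation so that $g_{1}, g_{2}$ have no poles at the ends; completeness plus the period condition forcing $hdz$ to have poles of order $\mu_{j}\geq 2$ at every end; the canonical degree $2G-2$ of $hdz$ combined with the regularity condition) that appear essentially verbatim in the proofs of Theorems \ref{thm-rami3} and \ref{thm-R4-M2}. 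The two delicate points are handled correctly. First, the unramified preimages sitting at the ends are not discarded but absorbed into $\sum_{j} q_{j}\leq k$, which is precisely what produces the additive $k$ in $d_{i}\nu_{g_{i}}\leq 2(d_{i}+G-1)+k$, i.e.\ $\nu_{g_{i}}\leq 2+(2G-2+k)/d_{i}=2+1/R_{i}$. Second, in part (i) the inversion $1/(\nu_{g_{i}}-2)\geq R_{i}$ is legitimate because the hypothesis $\nu_{g_{i}}>2$ together with your estimate forces $1/R_{i}\geq \nu_{g_{i}}-2>0$, hence $2G-2+k>0$; this silently rules out the degenerate cases $G=0$, $k\leq 2$ (where $R_{i}$ would be negative or undefined), which is the same point the paper has to dispose of separately in the proof of Theorem \ref{thm-R4-M2}. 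Finally, your derivations of the strict bounds, $d_{1}+d_{2}=2G-2+\sum_{j}\mu_{j}\geq 2G-2+2k>2G-2+k$ for $R_{1}+R_{2}>1$ in part (i), and $d_{1}\geq 2G-2+2k>2G-2+k$ (with $d_{1}>0$, so the conclusion $1/R_{1}<1$ holds irrespective of the sign of $2G-2+k$) in part (ii), are correct and match how the paper argues the analogous strictness $1/R<1$ in Theorem \ref{thm-KKM}.
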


Theorem \ref{R4-fact-1} is proved by using the methods of complex algebraic geometry. We obtain the following result as an immediate consequence of Theorem \ref{R4-fact-1}.  

\begin{corollary}\cite[Theorem 6.10]{HO1980}\label{R4-cor-2} 
Let $X\colon \Sigma \to \mathbf{R}^4$ be a complete minimal surface of finite total curvature and $g=(g_{1}, g_{2})\colon \Sigma \to \overline{\mathbf{C}}\times \overline{\mathbf{C}}$ be its Gauss map. 
\begin{enumerate}
\item[$\rm(\hspace{.18em}i\hspace{.18em})$] If both $g_{1}$ and $g_{2}$ omit more than $3$ values, then $X(\Sigma)$ must be a plane. 
\item[$\rm(\hspace{.08em}ii\hspace{.08em})$] If either $g_{1}$ or $g_{2}$, say $g_{2}$, is constant and $g_{1}$ omits more than $2$ values, then $X(\Sigma)$ must be a plane.  
\end{enumerate}
\end{corollary}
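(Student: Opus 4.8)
The plan is to argue by contraposition, assuming $X(\Sigma)$ is not a plane and deriving a contradiction from Theorem \ref{R4-fact-1}. The first step is to record the dictionary between ``plane'' and the Weierstrass data. From the curvature formula \eqref{eq-G-curvature4}, the condition $K \equiv 0$ forces $g_{1}' \equiv 0$ and $g_{2}' \equiv 0$, so a complete minimal surface in $\mathbf{R}^4$ is flat --- equivalently a plane --- precisely when both $g_{1}$ and $g_{2}$ are constant. Hence the statement ``$X(\Sigma)$ is not a plane'' means exactly that the surface is nonflat, and consequently at least one of $g_{1}, g_{2}$ is nonconstant, so that Theorem \ref{R4-fact-1} is applicable. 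The second preliminary step is the elementary observation, built into Definition \ref{def-total}, that an omitted value is a totally ramified value of order $\infty$ and therefore carries weight $1$; thus if $g_{i}$ omits more than $n$ values, then $D_{g_{i}} \geq n+1$ and hence $\nu_{g_{i}} \geq D_{g_{i}} \geq n+1$.

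For part $\rm(i)$, the hypothesis gives $\nu_{g_{1}} \geq 4$ and $\nu_{g_{2}} \geq 4$. Suppose the surface is nonflat. If both $g_{1}$ and $g_{2}$ are nonconstant, then $\nu_{g_{i}} - 2 \geq 2$, so each reciprocal $1/(\nu_{g_{i}}-2)$ is at most $1/2$ and their sum is at most $1$; this contradicts the strict inequality $1/(\nu_{g_{1}}-2) + 1/(\nu_{g_{2}}-2) > 1$ of Theorem \ref{R4-fact-1}$\rm(i)$. If instead one of the maps, say $g_{2}$, is constant (the other then being nonconstant by nonflatness), Theorem \ref{R4-fact-1}$\rm(ii)$ gives $\nu_{g_{1}} < 3$, contradicting $\nu_{g_{1}} \geq 4$. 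In either case we reach a contradiction, so $X(\Sigma)$ must be a plane.

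For part $\rm(ii)$, here $g_{2}$ is constant and $g_{1}$ omits more than $2$ values, so $\nu_{g_{1}} \geq D_{g_{1}} \geq 3$. If the surface were nonflat, then $g_{1}$ would be nonconstant, and Theorem \ref{R4-fact-1}$\rm(ii)$ would yield $\nu_{g_{1}} < 3$, a contradiction. Hence $X(\Sigma)$ is a plane.

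I do not anticipate any genuine obstacle, since the corollary is a direct bookkeeping consequence of Theorem \ref{R4-fact-1} together with the weight-$1$ interpretation of omitted values. The only point that requires care is the case distinction in part $\rm(i)$: when one component of the Gauss map happens to be constant, the sum-of-reciprocals estimate in part $\rm(i)$ of the theorem is no longer available, and one must instead fall back on part $\rm(ii)$; overlooking this constant case would leave a gap in the argument.
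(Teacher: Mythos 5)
Your proof is correct and follows exactly the route the paper intends: the paper states this corollary as an ``immediate consequence'' of Theorem \ref{R4-fact-1}, and your argument is precisely that deduction, using $D_{g_{i}}\leq \nu_{g_{i}}$ to convert omitted-value counts into weights and then contradicting the inequalities of Theorem \ref{R4-fact-1}. Your attention to the case in part $\rm(\hspace{.18em}i\hspace{.18em})$ where one component of the Gauss map is constant (so that part $\rm(\hspace{.08em}ii\hspace{.08em})$ of the theorem must be invoked instead of the sum-of-reciprocals estimate) is a detail the paper leaves implicit, and it is handled correctly.
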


\begin{example}[Lagrangian catenoid in $\mathbf{C}^2$]\label{R4-ex-1}
Consider $\Sigma =\overline{\mathbf{C}}\backslash \{ 0, \infty \}$ and 
\begin{equation}\label{R4-w-data1}
(g_{1}(z), g_{2}(z), hdz) = \left( -z^2, c, -\dfrac{dz}{z^2} \right), \quad c\in \mathbf{C}. 
\end{equation}
Then we easily show that this Weierstrass data satisfies the conformality, regularity and period conditions. 
Thus we obtain a complete minimal surface in $\mathbf{R}^4$ whose W-data is given by \eqref{R4-w-data1}. 
In particular, the surface has finite total curvature and $g_{1}$ has $2$ omitted values $0, \infty$. Thus $D_{g_{1}}=\nu_{g_{1}}=2$ holds and the surface shows that 
$\rm(\hspace{.08em}ii\hspace{.08em})$ in Corollary \ref{R4-cor-2} is optimal. Moreover the surface also shows 
that \eqref{eq-4-Kaw-3} in Theorem \ref{R4-fact-1} is sharp 
in the case $(G, k, d_{1})= (0, 2, 2)$ because we have $\nu_{g_{1}}=2$ and $2+(1/R_{1}) = 2+ \{ (0-2+2)/2 \}=2$. 
This surface is called the Lagrangian catenoid in $\mathbf{C}^2 (\simeq \mathbf{R}^4)$ (see \cite{CF1999}).    
\end{example}

We obtain the following example which shows that \eqref{eq-4-Kaw-1} is optimal under some geometric condition.   

\begin{theorem}\label{thm-R4-HKW-1}
Consider $\Sigma =\overline{\mathbf{C}}\backslash \{ \pm \mathrm{i}, \infty \}$ and 
\begin{equation}\label{R4-w-data2}
(g_{1}(z), g_{2}(z), hdz) = \left( \dfrac{z^2 +a}{z^2 -1}, \dfrac{z^2 +b}{z^2 -1}, \dfrac{(z^2 -1)^2}{(z^2 +1)^2}\,dz \right).
\end{equation} 
For any $a, b\in \mathbf{R}$ satisfying $(a+1)(b+1)=8$, we obtain a complete minimal surface $X\colon \Sigma \to \mathbf{R}^4$ whose Weierstrass data is given by \eqref{R4-w-data2}. 
In particular, the surface $X(\Sigma)$ has finite total curvature and $\nu_{g_{1}}=\nu_{g_{2}}=5/2$. 
\end{theorem}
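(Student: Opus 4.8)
The plan is to run the same scheme as in the proof of Theorem \ref{M-thm-1}, now using the $\mathbf{R}^4$ Weierstrass representation \eqref{eq-phi4}. First I would dispose of the two easy conditions. Conformality is automatic from \eqref{eq-phi4}. For regularity I note that $g_{1}$ and $g_{2}$ each have simple poles exactly at $z=\pm 1$, while $hdz=(z^2-1)^2/(z^2+1)^2\,dz$ has double zeros there; hence $(hdz)_{0}=2[1]+2[-1]=(g_{1})_{\infty}+(g_{2})_{\infty}$, so \eqref{R4-w-data2} is regular on $\Sigma$.

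The heart of the matter is the period condition. Since $\Sigma=\overline{\mathbf{C}}\setminus\{\mathrm{i},-\mathrm{i},\infty\}$, the group $H_{1}(\Sigma,\mathbf{Z})$ is generated by small loops $c_{\mathrm{i}},c_{-\mathrm{i}}$ about $\pm\mathrm{i}$, the loop about $\infty$ being homologous to $-(c_{\mathrm{i}}+c_{-\mathrm{i}})$. Substituting \eqref{R4-w-data2} into \eqref{eq-phi4}, each $\phi_{j}=R_{j}(z)\,dz$ is rational with a double pole at $z=\mathrm{i}$ and numerator even in $z$. Using $\mathrm{Res}_{z=\mathrm{i}}\bigl(R(z)/(z^2+1)^2\bigr)=(-R'(\mathrm{i})-\mathrm{i}R(\mathrm{i}))/4$, I would compute each residue explicitly. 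The upshot is that $\mathrm{Res}_{z=\mathrm{i}}\phi_{2}=-(a+1)(b+1)/8$, $\mathrm{Res}_{z=\mathrm{i}}\phi_{3}=0$, and $\mathrm{Res}_{z=\mathrm{i}}\phi_{4}=1$ are all automatically real, whereas $\mathrm{Res}_{z=\mathrm{i}}\phi_{1}=-\mathrm{i}(ab+a+b-7)/8$ is purely imaginary. Thus $\int_{c_{\mathrm{i}}}\phi_{1}$ is real and vanishes precisely when $ab+a+b+1=8$, i.e. $(a+1)(b+1)=8$. I expect this single real period to be the main obstacle: the hypothesis $(a+1)(b+1)=8$ is forced by, and only by, the closing of the $\phi_{1}$-period.

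Because $g_{1},g_{2},h$ are even, the substitution $z\mapsto -z$ gives $\mathrm{Res}_{-\mathrm{i}}\phi_{j}=-\mathrm{Res}_{\mathrm{i}}\phi_{j}$, so the $c_{-\mathrm{i}}$-periods are likewise purely imaginary and the $\infty$-period follows by homology; hence \eqref{R4-w-data2} satisfies the period condition, and \eqref{eq-integral4} yields a well-defined $X\colon\Sigma\to\mathbf{R}^4$. For completeness I would examine the line element $ds=|h|\sqrt{(1+|g_{1}|^2)(1+|g_{2}|^2)}\,|dz|$ near each end: at $z=\pm\mathrm{i}$ the factor $|h|$ has a pole of order $2$ while $g_{1},g_{2}$ stay finite, and at $z=\infty$ one has $h\to 1$, $g_{1},g_{2}\to 1$; in either case $\int_{\gamma}ds=+\infty$ along every divergent path. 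Since the data is rational it extends meromorphically to $\overline{\mathbf{C}}=\overline{\Sigma}_{0}$, so Fact \ref{thm-HO4} gives finite total curvature with $C(\Sigma)=-2\pi(d_{1}+d_{2})=-8\pi$, as $\deg g_{1}=\deg g_{2}=2$.

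Finally I would read off the ramification data of each Gauss component. As a degree-$2$ map $\overline{\mathbf{C}}\to\overline{\mathbf{C}}$, $g_{1}=(z^2+a)/(z^2-1)$ has exactly two branch points, $z=0$ (value $-a$) and $z=\infty$ (value $1$), and the fibre over $\tfrac{1-a}{2}=g_{1}(\pm\mathrm{i})$ is exactly $\{\mathrm{i},-\mathrm{i}\}$. On $\Sigma$ this means $1$ and $\tfrac{1-a}{2}$ are omitted (all their preimages are deleted), while $-a$ is totally ramified of order $2$ (unique preimage $0\in\Sigma$, of multiplicity $2$); since $a\neq -1$ these three values are distinct, and a degree-$2$ map has no further totally ramified values. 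Hence $\nu_{g_{1}}=1+1+(1-\tfrac12)=\tfrac52$, and the identical computation with $b$ (using $b\neq -1$, which also follows from $(a+1)(b+1)=8$) gives $\nu_{g_{2}}=\tfrac52$.
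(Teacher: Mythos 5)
Your proposal is correct and follows essentially the same route as the paper's own proof: direct verification of conformality and regularity, computation of the periods at $\pm\mathrm{i}$ via residues (your values $\mathrm{Res}_{\mathrm{i}}\phi_{1}=-\mathrm{i}(ab+a+b-7)/8$, $\mathrm{Res}_{\mathrm{i}}\phi_{2}=-(a+1)(b+1)/8$, $\mathrm{Res}_{\mathrm{i}}\phi_{3}=0$, $\mathrm{Res}_{\mathrm{i}}\phi_{4}=1$ reproduce exactly the paper's periods $\tfrac{\pi}{4}(ab+a+b-7)$, $-2\pi\mathrm{i}$, $0$, $2\pi\mathrm{i}$, with the $c_{-\mathrm{i}}$-periods obtained by the same $z\mapsto -z$ symmetry), completeness read off from the line element, total curvature $-8\pi$ from the degrees, and the same identification of two omitted values and one totally ramified value of order $2$ for each $g_{i}$. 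Your additional remarks---that the hypothesis $(a+1)(b+1)=8$ is precisely equivalent to closing the single real $\phi_{1}$-period, and the explicit check $a,b\neq -1$ guaranteeing the three exceptional values of each $g_{i}$ are distinct---are correct refinements of points the paper leaves implicit.
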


\begin{proof}
We first prove that a minimal surface $X\colon \Sigma \to \mathbf{R}^4$ whose W-data is given by \eqref{R4-w-data2} can be constructed. We can easily verify that \eqref{R4-w-data2} satisfies the conformality and regularity conditions. So it is sufficient to prove that \eqref{R4-w-data2} satisfies the period condition. We denote small 
circles about $z= \mathrm{i}, -\mathrm{i}$ by $c_{\mathrm{i}}, c_{-\mathrm{i}}$, respectively. 
Then we obtain 
\[
\begin{aligned}
& \int_{c_{\mathrm{i}}} \phi_{1} = - \int_{c_{-\mathrm{i}}} \phi_{1} = \dfrac{\pi}{4}(ab+a+b-7) = 0, \\
& \int_{c_{\mathrm{i}}} \phi_{2} = - \int_{c_{-\mathrm{i}}} \phi_{2} = -\dfrac{\pi}{4}\mathrm{i}(ab+a+b+1) = -2\pi\mathrm{i}, \\
& \int_{c_{\mathrm{i}}} \phi_{3} = - \int_{c_{-\mathrm{i}}} \phi_{3} = 0,  \quad \int_{c_{\mathrm{i}}} \phi_{4} = - \int_{c_{-\mathrm{i}}} \phi_{4} = 2\pi\mathrm{i}. 
\end{aligned}
\]
Since $H_{1}(\Sigma, \mathbf{Z}) = \mathbf{Z} \{ c_{\mathrm{i}}, c_{-\mathrm{i}} \}$, each $\phi_{i}\, (i=1, 2, 3, 4)$ has no real periods on $\Sigma$, 
that is, \eqref{R4-w-data2} satisfies the period condition. 

Since the line element $ds:= |h|\sqrt{(1+|g_{1}|^2)(1+|g_{2}|^2)}|dz|$ of $X(\Sigma)$ can be represented as 
\[
\dfrac{|z^2 -1|^2}{|z^2 +1|^2}\sqrt{\left( 1+ \dfrac{|z^2 +a|^2}{|z^2 -1|^2} \right)\left( 1+ \dfrac{|z^2 +b|^2}{|z^2 -1|^2} \right)} |dz|,  
\]
we see that $\int_{\gamma} ds=+\infty$ for any divergent path $\gamma$ on $\Sigma$. Hence $X(\Sigma)$ is complete. 
We conclude that $X(\Sigma)$ is a complete minimal surface 
of finite total curvature in $\mathbf{R}^4$ and has $C(\Sigma) = -2\pi (2+2) = -8\pi$.  

We next show $\nu_{g_{1}}= \nu_{g_{2}} =5/2$. The function $g_{1}$ has $2$ omitted values $(1-a)/2\, (= g_{1}(\pm \mathrm{i}))$, $1\, (=g_{1}(\infty))$ and $1$ totally ramified value $-a\, (=g_{1}(0))$ of order $2$. Hence $\nu_{g_{1}} =1+1+(1-1/2)=5/2$. 
In the same way, the function $g_{2}$ has $2$ omitted values $(1-b)/2\, (= g_{2}(\pm \mathrm{i}))$,  
$1\, (=g_{2}(\infty))$ and $1$ totally ramified value $-b\, (=g_{2}(0))$ of order $2$. Hence $\nu_{g_{2}} =1+1+(1-1/2)=5/2$. 
\end{proof}

\begin{remark}\label{rmk-R4-HKW-1}
In Theorem \ref{thm-R4-HKW-1}, when $a=b$, $X(\Sigma)$ is then $1$-decomposable (See \cite[\S 4]{HO1980} for the definition of $h$-decomposable).  
This is because we have $\phi_{3}\equiv 0$. 
\end{remark}

Theorem \ref{thm-R4-HKW-1} shows that \eqref{eq-4-Kaw-1} in Theorem \ref{R4-fact-1} is optimal 
in the case $(G, k, d_{1}, d_{2}) = (0, 3, 2, 2)$ because we have 
\[
\dfrac{1}{\nu_{g_{1}} -2} + \dfrac{1}{\nu_{g_{2}} -2} =2 +2 =4  
\] 
and $R_{1}+R_{2} = 2/(0-2+3) +2/(0-2+3)= 4$. 

However, we do not know that \eqref{eq-4-Kaw-1} in Theorem \ref{R4-fact-1} is optimal for every $(G, k, d_{1}, d_{2})$ and $\rm(\hspace{.18em}i\hspace{.18em})$ of  
Corollary \ref{R4-cor-2} is sharp. As in the case of $\mathbf{R}^3$, the following problem can be considered. 

\begin{problem}\label{pro-R4-KKM}
For every nonflat complete minimal surface of finite total curvature in $\mathbf{R}^4$ whose nonconstant Gauss maps $g_{1}$ and $g_{2}$ have $\nu_{g_{1}}> 2$ and $\nu_{g_{2}}>2$, 
what is the best lower bound $\kappa$ of $1/(\nu_{g_{1}}-2) + 1/(\nu_{g_{2}}-2)$?
\end{problem}

By Theorems \ref{R4-fact-1} and \ref{thm-R4-HKW-1}, we have $1<\kappa \leq 4$. 
If the genus $G$ is equal to $0$, we show the following more precise estimate.    

\begin{theorem}\label{thm-R4-M2}
Consider a nonflat complete minimal surface $X\colon \Sigma \to \mathbf{R}^4$ of genus $0$ of finite 
total curvature and its Gauss map $g=(g_{1}, g_{2})\colon \Sigma \to \overline{\mathbf{C}}\times \overline{\mathbf{C}}$.  
Let $\nu_{g_{i}}$ be the total weight of the totally ramified values of $g_{i}\,(i=1, 2)$. 
Assume that both $g_{1}$ and $g_{2}$ are nonconstant. If $\nu_{g_{1}} > 2$ and $\nu_{g_{2}} > 2$, then we have 
\begin{equation}\label{eq-R4-M2}
\dfrac{1}{\nu_{g_{1}}-2} + \dfrac{1}{\nu_{g_{2}}-2} >2. 
\end{equation}
We thereby have $2<\kappa \leq 4$ in the case of genus $0$ for Problem \ref{pro-R4-KKM}.  
\end{theorem}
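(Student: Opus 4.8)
The plan is to reduce the claim to the single geometric inequality $R_{1}+R_{2}>2$ and then feed it into Theorem \ref{R4-fact-1}$\rm(\hspace{.18em}i\hspace{.18em})$. By Fact \ref{thm-HO4} I may take $\Sigma =\overline{\mathbf{C}}\setminus \{p_{1},\ldots,p_{k}\}$ and assume that $(g_{1},g_{2},hdz)$ extends meromorphically to $\overline{\mathbf{C}}$, with $d_{i}=\deg g_{i}$. For genus $0$ one has $2G-2+k=k-2$, so $R_{i}=d_{i}/(k-2)$ and $R_{1}+R_{2}=(d_{1}+d_{2})/(k-2)$; hence everything comes down to a sharp lower bound for $d_{1}+d_{2}$ in terms of $k$, together with the positivity of $k-2$.

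First I would secure $k\geq 3$. If $k\leq 2$, then $\Sigma$ is biholomorphic to $\mathbf{C}$ (when $k=1$) or to $\mathbf{C}\setminus\{0\}$ (when $k=2$); since each $g_{i}$ extends meromorphically to $\overline{\mathbf{C}}$, Proposition \ref{prop-rational} in the case $k=1$ and Remark \ref{rmk-puncture} in the case $k=2$ both force $\nu_{g_{i}}\leq 2$, contradicting the hypothesis $\nu_{g_{i}}>2$. (The case $k=0$ is excluded because $\Sigma$ would be compact.) Therefore $k-2\geq 1>0$ and the $R_{i}$ are genuinely positive.

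Next I would derive $d_{1}+d_{2}\geq 2k-2$, following verbatim the divisor computation in the proof of Theorem \ref{thm-rami3}. After a rotation of $\mathbf{R}^{4}$, which acts on each factor of the Gauss map by a rotation (a Möbius transformation) of $\overline{\mathbf{C}}$, I may assume that neither $g_{1}$ nor $g_{2}$ has a zero or a pole at any end $p_{j}$. Completeness forces $hdz$ to have a pole at each $p_{j}$, and the period condition upgrades this to a pole of order $\mu_{j}\geq 2$. Computing the degree of the canonical divisor of the meromorphic differential $hdz$ on the sphere, where the regularity condition gives $(hdz)_{0}=(g_{1})_{\infty}+(g_{2})_{\infty}$ of degree $d_{1}+d_{2}$ and the polar divisor is $\sum_{j}\mu_{j}p_{j}$, yields $-2=(d_{1}+d_{2})-\sum_{j}\mu_{j}$, so that $\sum_{j}\mu_{j}=d_{1}+d_{2}+2\geq 2k$, i.e. $d_{1}+d_{2}\geq 2k-2$.

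Finally I would combine the two steps: since $k-2>0$,
\[
R_{1}+R_{2}=\dfrac{d_{1}+d_{2}}{k-2}\geq \dfrac{2k-2}{k-2}=2+\dfrac{2}{k-2}>2,
\]
and Theorem \ref{R4-fact-1}$\rm(\hspace{.18em}i\hspace{.18em})$ then gives $1/(\nu_{g_{1}}-2)+1/(\nu_{g_{2}}-2)\geq R_{1}+R_{2}>2$, which is \eqref{eq-R4-M2}; the bound $2<\kappa\leq 4$ for Problem \ref{pro-R4-KKM} follows upon recalling the example in Theorem \ref{thm-R4-HKW-1}. I expect the main obstacle to be the divisor bookkeeping of the third step, namely justifying $\mu_{j}\geq 2$ at every end and verifying that a single rotation can simultaneously clear all zeros and poles of $g_{1}$ and $g_{2}$ off the $p_{j}$, since the remaining steps are either a direct appeal to Theorem \ref{R4-fact-1} or elementary arithmetic.
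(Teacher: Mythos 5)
Your proposal is correct and takes essentially the same route as the paper's own proof: ruling out $k\leq 2$ via Proposition \ref{prop-rational} and Remark \ref{rmk-puncture}, normalizing by an isometry so the ends carry no poles of $g_{1},g_{2}$, obtaining $d_{1}+d_{2}\geq 2(k-1)$ from the degree count for the extended differential $hdz$ with $\mu_{j}\geq 2$ at each end, and then invoking \eqref{eq-4-Kaw-1} of Theorem \ref{R4-fact-1}. The only cosmetic difference is the closing arithmetic (you compare $R_{1}+R_{2}$ with $(2k-2)/(k-2)$, while the paper compares $(d_{1}+d_{2})/(k-2)$ with $(d_{1}+d_{2})/(k-1)$), which is immaterial.
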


\begin{proof}
By a suitable isometric transformation of the surface in $\mathbf{R}^4$, 
we assume that both $g_{1}$ and $g_{2}$ are no pole at $p_{j}$ and have only simple poles. 
By the completeness of the surface, $hdz$ has a pole at $p_{j}$ (\cite{Ma1963}, \cite[Lemma 9.6]{Os1986}, \cite{UY2011}). 
Since the surface also satisfies the period condition,  $hdz$ has a pole of order ${\mu}_{j}\geq 2$ at $p_{j}$ (\cite{Os1964}). 
Moreover the total order of zeros of $hdz$ on $\overline{\Sigma}_{G}$ is $d_{1}+d_{2}$ due to the regularity condition. 
Applying the Riemann-Roch theorem to the meromorphic differential $hdz$ on $\overline{\Sigma}_{G}$, 
we obtain 
\begin{equation}\label{eq-R4-M2-1}
d_{1}+d_{2} -\displaystyle \sum_{j=1}^{k} \mu_{j} = 2G-2.  
\end{equation}
We consider $G=0$. We first assume $k\leq 2$. Then $\Sigma$ is biholomorphic to $\mathbf{C}\,\, (k=1)$  or $\mathbf{C}\backslash \{ 0 \}\,\, (k=2)$. 
If $k=1$, then $\nu_{g_{1}}< 2$ and $\nu_{g_{2}}< 2$ from Proposition \ref{prop-rational}. 
For the case of $k=2$, we have $\nu_{g_{1}}\leq 2$ and $\nu_{g_{2}}\leq 2$ from Remark \ref{rmk-puncture}. 
We thus may assume $k\geq 3$.  
Since $\mu_{j}\geq 2$ holds for each $j$, \eqref{eq-R4-M2-1} implies that 
\[
d_{1}+d_{2} = \displaystyle \sum_{j=1}^{k} \mu_{j} -2 \geq 2(k-1), \quad \mathrm{i.e.,} \quad \dfrac{d_{1}+d_{2}}{k-1} \geq 2. 
\]
By \eqref{eq-4-Kaw-1} in Theorem \ref{R4-fact-1}, we have 
\[
\dfrac{1}{\nu_{g_{1}}-2} + \dfrac{1}{\nu_{g_{2}}-2} \geq R_{1}+R_{2} = \dfrac{d_{1}+d_{2}}{k-2} > \dfrac{d_{1}+d_{2}}{k-1} \geq 2. 
\]
\end{proof}

As an application of Theorem \ref{thm-R4-M2}, we can give more refined estimate than Corollary \ref{R4-cor-2} when the genus of the surface is $0$.  

\begin{corollary}\label{thm-R4-M3}
Let $X\colon \Sigma \to \mathbf{R}^4$ be a nonflat complete minimal surface of genus $0$ of finite 
total curvature and $g=(g_{1}, g_{2})\colon \Sigma \to \overline{\mathbf{C}}\times \overline{\mathbf{C}}$ be 
its Gauss map. If both $g_{1}$ and $g_{2}$ are nonconstant, then $g_{1}$ or $g_{2}$ can omit at most $2$ values in $\overline{\mathbf{C}}$. 
\end{corollary}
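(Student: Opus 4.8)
The plan is to argue by contradiction, reducing the statement directly to Theorem \ref{thm-R4-M2} together with the elementary inequality $D_{f}\leq \nu_{f}$ recorded in Definition \ref{def-total}. Suppose, contrary to the assertion, that both $g_{1}$ and $g_{2}$ omit at least $3$ values in $\overline{\mathbf{C}}$; that is, $D_{g_{1}}\geq 3$ and $D_{g_{2}}\geq 3$. Since an omitted value is counted as a totally ramified value of order $\infty$ and hence of weight $1$, we have $\nu_{g_{i}}\geq D_{g_{i}}\geq 3$ for $i=1,2$. In particular both $g_{1}$ and $g_{2}$ are nonconstant with $\nu_{g_{1}}>2$ and $\nu_{g_{2}}>2$, so the hypotheses of Theorem \ref{thm-R4-M2} are satisfied.

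Next I would exploit the monotonicity of the function $t\mapsto 1/(t-2)$ on $(2,\infty)$. From $\nu_{g_{i}}\geq 3$ we obtain $\nu_{g_{i}}-2\geq 1$, whence
\[
\dfrac{1}{\nu_{g_{1}}-2}+\dfrac{1}{\nu_{g_{2}}-2}\leq \dfrac{1}{1}+\dfrac{1}{1}=2.
\]
This contradicts the strict inequality \eqref{eq-R4-M2} of Theorem \ref{thm-R4-M2}, which asserts that the very same sum exceeds $2$. Therefore the assumption that both Gauss maps omit at least $3$ values is untenable, so at least one of $g_{1}$, $g_{2}$ omits at most $2$ values, as claimed.

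The argument requires essentially no computation; the only point deserving care is the bookkeeping of the definition, namely that the count of omitted values feeds into the total weight $\nu_{g_{i}}$ through $D_{g_{i}}\leq \nu_{g_{i}}$, and that the genus-zero hypothesis is precisely what upgrades the general bound ``$>1$'' of \eqref{eq-4-Kaw-2} to the sharper ``$>2$'' of \eqref{eq-R4-M2}. Since the contradiction is driven by this strict factor of $2$, the main (indeed only) obstacle would be a degenerate situation in which one of the $g_{i}$ fails to be nonconstant; but that case is excluded by hypothesis, so no further case analysis is needed. I would also note in passing that the bound is sharp, which the examples constructed later (Theorems \ref{pro-R4-M4}, \ref{pro-R4-M5}) will confirm.
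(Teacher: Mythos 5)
Your proof is correct and follows essentially the same route as the paper: argue by contradiction from $D_{g_{1}}, D_{g_{2}}\geq 3$, feed $D_{g_{i}}\leq \nu_{g_{i}}$ into the genus-zero estimate of Theorem \ref{thm-R4-M2}, and contradict the strict bound $>2$ with the elementary bound $\leq 2$. The only difference is cosmetic ordering — the paper bounds the sum in terms of $D_{g_{i}}$ first and then compares with the $\nu_{g_{i}}$ sum, while you pass to $\nu_{g_{i}}\geq 3$ immediately — which changes nothing of substance.
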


\begin{proof}
Let $D_{g_{i}}$ be the number of omitted values of $g_{i}\,(i=1, 2)$. It is proved by contradiction.  
Suppose that $D_{g_{1}} \geq 3$ and $D_{g_{2}} \geq 3$. Then we have  
\begin{equation}\label{eq-M-cor-2}
\dfrac{1}{D_{g_{1}}-2} + \dfrac{1}{D_{g_{2}}-2} \leq \dfrac{1}{3-2} + \dfrac{1}{3-2} =2. 
\end{equation}
On the other hand, by $D_{g_{i}} \leq \nu_{g_{i}}$ and Theorem \ref{thm-R4-M2}, we obtain  
\begin{equation}
\dfrac{1}{D_{g_{1}}-2} + \dfrac{1}{D_{g_{2}}-2} \geq  \dfrac{1}{\nu_{g_{1}}-2} + \dfrac{1}{\nu_{g_{2}}-2}>2. 
\end{equation}
It contradicts \eqref{eq-M-cor-2}. 
\end{proof}

\begin{remark}\label{rmk-R4-M3}
By Corollary \ref{thm-R4-M3}, we see that for the case of genus $0$, the inequality corresponding to \eqref{eq-4-Kaw-1} in Theorem \ref{R4-fact-1} does not arise in the estimate for the number of omitted values. Theorem \ref{thm-R4-HKW-1} is an example to show that  \eqref{eq-4-Kaw-1} in Theorem \ref{R4-fact-1} can occur for the total weight of the totally ramified values of the Gauss maps of complete minimal surfaces of genus $0$ of finite total curvature in $\mathbf{R}^4$. 
\end{remark}

Finally, we give complete minimal surfaces of genus $0$ of finite total curvature in $\mathbf{R}^4$ 
whose both $g_{1}$ and $g_{2}$ omit $2$ values. 
These examples show that Corollary \ref{thm-R4-M3} is sharp. 
The first example (Theorem \ref{pro-R4-M4}) is an extension of the 
W-data of \cite[Theorem 1]{MS1994}.     

\begin{theorem}\label{pro-R4-M4}
Consider $\Sigma =\overline{\mathbf{C}} \backslash \{ 0, \infty \}$ and 
\begin{equation}\label{eq-pro-R4-M4-1}
(g_{1}(z), g_{2}(z), hdz) = \left( \dfrac{z^m -a}{z^m- 1}, \dfrac{z^n -b}{z^n -1}, \dfrac{(z^m -1)(z^n -1)}{z^l}\,dz \right), 
\end{equation}
where $a, b \in \mathbf{R} \backslash \{ 0, 1\}$ and $l, m, n$ are positive integers satisfying 
\[
m+n >2l-2\geq 2, \quad m-l\not= -1, \quad n-l\not= -1. 
\]
Then we obtain a complete minimal surface $X\colon \Sigma \to \mathbf{R}^4$ whose Weierstrass data is 
given by \eqref{eq-pro-R4-M4-1}. In particular, $X(\Sigma)$ has finite total curvature, $g_{1}$ omits two values $1, a$ and $g_{2}$ omits two values $1, b$.  
\end{theorem}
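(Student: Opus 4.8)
The plan is to follow the same route as in the proofs of Theorems \ref{M-thm-1} and \ref{thm-R4-HKW-1}: since the Weierstrass representation \eqref{eq-phi4} makes the conformality condition automatic, it suffices to verify in turn the regularity condition, the period condition, completeness, finiteness of the total curvature, and finally the assertion about the omitted values. Here $\Sigma=\overline{\mathbf{C}}\setminus\{0,\infty\}=\mathbf{C}\setminus\{0\}$ has genus $0$ and only two punctures, so $H_{1}(\Sigma,\mathbf{Z})=\mathbf{Z}$ is generated by a single small circle $c_{0}$ about $z=0$; this is what makes the two-puncture case lighter than the three-puncture computation in Theorem \ref{thm-R4-HKW-1}. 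For the regularity condition I would note that the poles of $g_{1}$ are the simple poles at the $m$-th roots of unity and those of $g_{2}$ the simple poles at the $n$-th roots of unity, while $h$ carries the factor $(z^{m}-1)(z^{n}-1)$ in its numerator; hence the zeros of $hdz$ on $\mathbf{C}\setminus\{0\}$ match $(g_{1})_{\infty}+(g_{2})_{\infty}$ exactly (multiplicities simply add at the common roots), which is the nondegeneracy requirement $(hdz)_{0}=(g_{1})_{\infty}+(g_{2})_{\infty}$ underlying the metric \eqref{eq-metric4}. A short computation in the coordinate $w=1/z$ shows that $hdz$ has a pole of order $l$ at $z=0$ and a pole of order $m+n-l+2$ at $z=\infty$, both at least $2$ thanks to $l\geq 2$ and $m+n>2l-2$.

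The main obstacle is the period condition, and the key simplification is that
\[
g_{1}h=\frac{(z^{m}-a)(z^{n}-1)}{z^{l}},\quad g_{2}h=\frac{(z^{m}-1)(z^{n}-b)}{z^{l}},\quad g_{1}g_{2}h=\frac{(z^{m}-a)(z^{n}-b)}{z^{l}}
\]
are all Laurent polynomials with real coefficients, so that $\int_{c_{0}}\phi_{i}=2\pi\mathrm{i}\,\mathrm{Res}_{0}(\phi_{i}/dz)$ is governed by the coefficient of $z^{l-1}$ in the relevant numerator. Because $a,b$ are real, the residues of $\phi_{1}/dz=\frac{1}{2}(1+g_{1}g_{2})h$ and $\phi_{3}/dz=\frac{1}{2}(g_{1}-g_{2})h$ are real, so $\int_{c_{0}}\phi_{1},\int_{c_{0}}\phi_{3}\in\mathrm{i}\mathbf{R}$ have vanishing real part automatically. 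For $\phi_{2}$ and $\phi_{4}$ the factor $\mathrm{i}$ instead makes the periods real, so I must show that the two residues genuinely vanish: explicitly, $(1-g_{1}g_{2})h$ has numerator $(b-1)z^{m}+(a-1)z^{n}+(1-ab)$ and $(g_{1}+g_{2})h$ has numerator $2z^{m+n}-(1+b)z^{m}-(a+1)z^{n}+(a+b)$, both over $z^{l}$.

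Taking the coefficient of $z^{l-1}$, every surviving term is killed by exactly one hypothesis: $l\geq 2$ removes the constant term, $l\neq m+1$ (i.e. $m-l\neq-1$) removes the $z^{m}$ term, $l\neq n+1$ (i.e. $n-l\neq-1$) removes the $z^{n}$ term, and $m+n>2l-2$ forces $l\neq m+n+1$ and so removes the $z^{m+n}$ term. Here $a\neq 1$ and $b\neq 1$ make the $z^{m}$ and $z^{n}$ coefficients $(b-1),(a-1)$ of the first numerator nonzero, and the $z^{m+n}$ coefficient of the second is $2$, so these exclusions are indeed the ones that must be imposed. Hence both residues vanish, each $\phi_{i}$ has no real period on $\Sigma$, and the period condition holds.

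It remains to record the routine points. Completeness follows by estimating the line element $ds=|h|\sqrt{(1+|g_{1}|^{2})(1+|g_{2}|^{2})}\,|dz|$ near the punctures: $|h|\sim|z|^{-l}$ as $z\to0$ and $|h|\sim|z|^{m+n-l}$ as $z\to\infty$, with $g_{1},g_{2}$ bounded there, so every divergent path has infinite length. Since $(g_{1},g_{2},hdz)$ is rational, it extends meromorphically to $\overline{\mathbf{C}}$, and by Fact \ref{thm-HO4} the surface has finite total curvature $C(\Sigma)=-2\pi(d_{1}+d_{2})=-2\pi(m+n)$ with $d_{1}=m$, $d_{2}=n$. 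Finally, solving $g_{1}=1$ forces $a=1$ and solving $g_{1}=a$ forces $z=0\notin\Sigma$, so $g_{1}$ omits exactly $1$ and $a$; the same argument shows $g_{2}$ omits $1$ and $b$. I expect the residue bookkeeping in the period condition to be the only delicate step, the remaining verifications being direct analogues of those in Theorems \ref{M-thm-1} and \ref{thm-R4-HKW-1}.
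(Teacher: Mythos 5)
Your proposal is correct and follows essentially the same route as the paper's proof: the same residue computation around $c_{0}$ with the same Laurent numerators, with the hypotheses $l\geq 2$, $m-l\neq -1$, $n-l\neq -1$, $m+n>2l-2$ ruling out exactly the potential $z^{-1}$ terms, followed by the same completeness, total-curvature, and omitted-value checks. The only cosmetic difference is that the paper verifies that all four periods vanish identically, whereas you show this only for $\phi_{2},\phi_{4}$ and invoke the realness of the coefficients to get purely imaginary periods for $\phi_{1},\phi_{3}$ --- both suffice for the period condition.
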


\begin{proof}
Since we check at once that $g_{1}$ omits two values $1\,(=g_{1}(\infty)),\, a\,(=g_{1}(0))$ and $g_{2}$ omits two values $1\,(=g_{2}(\infty)),\, b\,(=g_{2}(0))$,  
we here prove that a complete minimal surface $X\colon \Sigma \to \mathbf{R}^4$ of finite 
total curvature whose W-data is given by \eqref{eq-pro-R4-M4-1} can be constructed. 
We can easily verify that \eqref{eq-pro-R4-M4-1} satisfies the conformality and regularity conditions. 
We show that \eqref{eq-pro-R4-M4-1} satisfies the period condition.  
Here $c_{0}$ stands for a small circle about $z=0$. Then we have 
\[
\begin{aligned}
& \int_{c_{0}} \phi_{1} = \dfrac{1}{2} \int_{c_{0}} \{ 2z^{m+n-l} -(b+1)z^{m-l} -(a+1)z^{n-l} +(ab+1)z^{-l} \}\,dz, \\
& \int_{c_{0}} \phi_{2} = \dfrac{\mathrm{i}}{2} \int_{c_{0}} \{ (b-1)z^{m-l} +(a-1)z^{n-l} +(1-ab)z^{-l} \}\,dz, \\
& \int_{c_{0}} \phi_{3} = \dfrac{1}{2} \int_{c_{0}} \{ (b-1)z^{m-l} -(a-1)z^{n-l} +(a-b)z^{-l} \}\,dz, \\
& \int_{c_{0}} \phi_{4} = -\dfrac{\mathrm{i}}{2} \int_{c_{0}} \{ 2z^{m+n-l} -(b+1)z^{m-l} -(a+1)z^{n-l} +(a+b)z^{-l} \}\,dz. 
\end{aligned}
\]
By $m+n> 2l-2$, we have $m+n-l > l-2 \geq 1-2 = -1$, that is, $m+n-l \geq 0$. Moreover, by assumption, 
$m-l\not= -1, \, n-l\not= -1$ and $-l\leq -2$ hold. Then we have 
\[   
\int_{c_{0}} \phi_{1} = \int_{c_{0}} \phi_{2} = \int_{c_{0}} \phi_{3} = \int_{c_{0}} \phi_{4} = 0.    
\]
Since $H_{1}(\Sigma, \mathbf{Z}) = \mathbf{Z} c_{0} $, \eqref{eq-pro-R4-M4-1} satisfies the period condition. 
We next prove that the surface $X(\Sigma)$ is complete.  Since the line element of $X(\Sigma)$ can be represented as 
\[
ds= \dfrac{1}{|z|^{l}}\sqrt{(|z^m -1|^2 +|z^m -a|^2)(|z^n -1|^2 +|z^n -b|^2)}\,|dz|, 
\]
we see that $\int_{\gamma} ds = +\infty$ for any divergent path $\gamma$ on $\Sigma$. 
Hence $X(\Sigma)$ is a complete minimal surface of finite total curvature in $\mathbf{R}^4$ and has $C(\Sigma) =-2\pi (m+n)$. 
\end{proof}

\begin{theorem}\label{pro-R4-M5}
Consider $\Sigma =\overline{\mathbf{C}} \backslash \{ 0, \infty \}$ and 
\begin{equation}\label{eq-pro-R4-M5-1}
(g_{1}(z), g_{2}(z), hdz) = \left( az, -\bar{a}z, \dfrac{dz}{z^2} \right), 
\end{equation}
where $a \in \mathbf{C} \backslash \{ 0 \}$. Then we obtain a complete minimal surface 
$X\colon \Sigma \to \mathbf{R}^4$ whose Weierstrass data is 
given by \eqref{eq-pro-R4-M5-1}. In particular, $X(\Sigma)$ has finite total curvature, both $g_{1}$ and $g_{2}$ omit two values $0, \infty$.   
\end{theorem}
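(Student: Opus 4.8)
The plan is to follow the same template used in the proofs of Theorems \ref{thm-R4-HKW-1} and \ref{pro-R4-M4}, verifying in turn the conformality, regularity, period, and completeness conditions for the W-data \eqref{eq-pro-R4-M5-1}, and then reading off the omitted values. The conformality condition is automatic from \eqref{eq-phi4}. For the regularity condition, I would compute the divisors: since $g_{1}=az$ and $g_{2}=-\bar{a}z$ each have a single simple pole at $z=\infty$, we have $(g_{1})_{\infty}+(g_{2})_{\infty}=2\,(\infty)$, while $hdz=dz/z^{2}$ has a double zero at $\infty$ (recall that in the coordinate $w=1/z$ near $\infty$ one has $dz/z^{2}=-dw$, so I must check this carefully — the zero of $hdz$ at $\infty$ must have order exactly $2$ to match). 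I would confirm $(hdz)_{0}=(g_{1})_{\infty}+(g_{2})_{\infty}$ holds on $\overline{\mathbf{C}}$, so the metric \eqref{eq-metric4} is positive on $\Sigma=\overline{\mathbf{C}}\setminus\{0,\infty\}$.

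Next I would treat the period condition, which is the only place real computation is needed. Since $H_{1}(\Sigma,\mathbf{Z})=\mathbf{Z}\,c_{0}$ is generated by a single small circle $c_{0}$ about $z=0$, it suffices to show $\int_{c_{0}}\phi_{i}=0$ for $i=1,2,3,4$. Substituting $g_{1}=az$, $g_{2}=-\bar{a}z$, and $hdz=dz/z^{2}$ into \eqref{eq-phi4}, the integrand of each $\phi_{i}$ is a Laurent polynomial in $z$ times $dz$, and by the residue theorem only the coefficient of $z^{-1}$ contributes. I would write out, for instance,
\[
\phi_{1}=\frac{1}{2}\bigl(1-a\bar{a}\,z^{2}\bigr)\frac{dz}{z^{2}}=\frac{1}{2}\bigl(z^{-2}-|a|^{2}\bigr)\,dz,
\]
which has no $z^{-1}$ term, hence $\int_{c_{0}}\phi_{1}=0$; similarly $\phi_{2}=\tfrac{\mathrm{i}}{2}(1+|a|^{2}z^{2})z^{-2}dz$, $\phi_{3}=\tfrac{1}{2}(a+\bar{a})z\cdot z^{-2}dz=\tfrac{1}{2}(a+\bar{a})z^{-1}dz$, and $\phi_{4}=-\tfrac{\mathrm{i}}{2}(a-\bar{a})z\cdot z^{-2}dz=-\tfrac{\mathrm{i}}{2}(a-\bar{a})z^{-1}dz$. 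Here the potential obstacle appears: $\phi_{3}$ and $\phi_{4}$ each carry a genuine $z^{-1}$ term, so their integrals are $\int_{c_{0}}\phi_{3}=\pi\mathrm{i}(a+\bar{a})$ and $\int_{c_{0}}\phi_{4}=\pi(a-\bar{a})$, which are generally nonzero. The resolution is that the period condition requires only that the \emph{real part} of each period vanish: since $a+\bar{a}=2\,\mathrm{Re}(a)\in\mathbf{R}$, the period $\pi\mathrm{i}(a+\bar{a})$ is purely imaginary, and since $a-\bar{a}=2\mathrm{i}\,\mathrm{Im}(a)\in\mathrm{i}\mathbf{R}$, the period $\pi(a-\bar{a})$ is also purely imaginary. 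Thus $\mathrm{Re}\int_{c_{0}}\phi_{i}=0$ for all $i$, and the period condition holds. This subtlety — that the specific conjugate pairing $g_{2}=-\bar{a}z$ is exactly what forces the real parts to vanish — is the crux of the construction and the main step to get right.

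Finally, for completeness I would exhibit the line element $ds=|h|\sqrt{(1+|g_{1}|^{2})(1+|g_{2}|^{2})}\,|dz|$ explicitly as
\[
ds=\frac{1}{|z|^{2}}\sqrt{(1+|a|^{2}|z|^{2})(1+|a|^{2}|z|^{2})}\,|dz|=\frac{1+|a|^{2}|z|^{2}}{|z|^{2}}\,|dz|,
\]
and observe that for any divergent path $\gamma$ on $\Sigma=\overline{\mathbf{C}}\setminus\{0,\infty\}$ the integral $\int_{\gamma}ds$ diverges: near $z=0$ the factor $1/|z|^{2}$ forces divergence, and near $z=\infty$ the factor $|a|^{2}|z|^{2}/|z|^{2}=|a|^{2}$ times $|dz|$ measured in the coordinate $w=1/z$ again gives a divergent integral (equivalently, the two ends at $0$ and $\infty$ are complete). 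By Fact \ref{thm-HO4}, the surface has finite total curvature with $C(\Sigma)=-2\pi(d_{1}+d_{2})=-2\pi(1+1)=-4\pi$, since $\deg g_{1}=\deg g_{2}=1$. The omitted values are then immediate: $g_{1}=az$ is a degree-one map of $\Sigma=\overline{\mathbf{C}}\setminus\{0,\infty\}$ into $\overline{\mathbf{C}}$, so it misses exactly the images of the two punctures, namely $g_{1}(0)=0$ and $g_{1}(\infty)=\infty$; likewise $g_{2}=-\bar{a}z$ misses $g_{2}(0)=0$ and $g_{2}(\infty)=\infty$. Hence both Gauss components omit the two values $0,\infty$, as claimed, and this realizes the sharpness of Corollary \ref{thm-R4-M3}.
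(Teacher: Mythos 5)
Your proposal follows the paper's proof step for step: the same residue computation of the periods over $c_{0}$, the same key observation that the nonvanishing periods of $\phi_{3}$ and $\phi_{4}$ are purely imaginary (which is indeed exactly what the conjugate pairing $g_{2}=-\bar{a}z$ is designed to achieve), the same line element $ds=(1+|a|^{2}|z|^{2})|z|^{-2}|dz|$ with divergence at both ends, the same total curvature $C(\Sigma)=-4\pi$, and the same identification of the omitted values $0=g_{i}(0)$, $\infty=g_{i}(\infty)$.

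However, your regularity step is wrong as written, and the check you propose would fail if you carried it out. You claim $hdz=dz/z^{2}$ has a double zero at $\infty$ and that you would confirm $(hdz)_{0}=(g_{1})_{\infty}+(g_{2})_{\infty}$ on $\overline{\mathbf{C}}$; but your own parenthetical computation refutes this: in the coordinate $w=1/z$ one has $hdz=-dw$, so $hdz$ has neither a zero nor a pole at $\infty$, whereas $(g_{1})_{\infty}+(g_{2})_{\infty}=2\,(\infty)$. The divisor equation does \emph{not} hold on $\overline{\mathbf{C}}$. The point you are missing is that the regularity condition is a condition on $\Sigma$, not on its compactification: the metric \eqref{eq-metric4} must be positive definite on $\Sigma=\overline{\mathbf{C}}\setminus\{0,\infty\}$, and there it holds trivially, since $g_{1},g_{2}$ have no poles on $\Sigma$ and $h=1/z^{2}$ has no zeros on $\Sigma$. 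No matching of divisors at the punctures is required; on the contrary, the mismatch at $\infty$ (poles of $g_{1},g_{2}$ unaccompanied by zeros of $hdz$) is precisely what makes the metric blow up there and renders that end complete, which you verify separately. With this correction the argument is complete and coincides with the paper's.
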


\begin{proof}
Since we check at once that both $g_{1}$ and $g_{2}$ omit two values $0\,(=g_{i}(0)),\, \infty\,(=g_{i}(\infty))$, we here prove that a complete minimal surface $X\colon \Sigma \to \mathbf{R}^4$ of finite 
total curvature whose W-data is given by \eqref{eq-pro-R4-M5-1} can be constructed. 
We can easily verify that \eqref{eq-pro-R4-M5-1} satisfies the conformality and regularity conditions. 
We show that \eqref{eq-pro-R4-M5-1} satisfies the period condition.  
Here $c_{0}$ stands for a small circle about $z=0$. Then we have 
\[
\begin{aligned}
& \int_{c_{0}} \phi_{1} = \dfrac{1}{2} \int_{c_{0}} \left( \dfrac{1}{z^2} - |a|^2 \right)\,dz =0, \\
& \int_{c_{0}} \phi_{2} = \dfrac{\mathrm{i}}{2} \int_{c_{0}} \left( \dfrac{1}{z^2} + |a|^2 \right) \,dz =0, \\
& \int_{c_{0}} \phi_{3} = \dfrac{a+\bar{a}}{2} \int_{c_{0}} \dfrac{dz}{z}\,dz = 2\pi\mathrm{i}\mathrm{Re}(a)\in \mathrm{i}\mathbf{R}, \\
& \int_{c_{0}} \phi_{4} = -\mathrm{i}\dfrac{a-\bar{a}}{2} \int_{c_{0}} \dfrac{dz}{z} =  2\pi\mathrm{i}\mathrm{Im}(a)\in \mathrm{i}\mathbf{R}.  
\end{aligned}
\]
Since $H_{1}(\Sigma, \mathbf{Z}) = \mathbf{Z} c_{0} $, each $\phi_{i}\, (i=1, 2, 3, 4)$ has no real periods on $\Sigma$, 
that is, \eqref{eq-pro-R4-M5-1} satisfies the period condition. 
We next prove that $X(\Sigma)$ is complete. Since the line element of $X(\Sigma)$ can be represented as 
\[
ds= \dfrac{1+|a|^2|z|^2}{|z|^2}\,|dz|, 
\]
we see that $\int_{\gamma} ds = +\infty$ for any divergent path $\gamma$ on $\Sigma$. 
Hence $X(\Sigma)$ is a complete minimal surface of finite total curvature in $\mathbf{R}^4$ and has $C(\Sigma) = -2\pi (1+1) = -4\pi$. 
\end{proof}

\begin{remark}\label{rmk-R4-M5}
In Theorem \ref{pro-R4-M5}, if $a+\bar{a} =0$, that is, $a\in \mathrm{i}\mathbf{R}$ holds, then 
$X(\Sigma)$ is $1$-decomposable because we have $\phi_{3} \equiv 0$. 
Moreover, if $a-\bar{a} =0$, that is, $a\in \mathbf{R}$ holds, then 
$X(\Sigma)$ is also $1$-decomposable because we have $\phi_{4} \equiv 0$. 
\end{remark}

\section{Outstanding problem and conjecture}\label{sec-Prob}
In this section, we discuss the following outstanding problem and conjecture in the study of value distribution of the Gauss map of  complete minimal surfaces in Euclidean space.  

\subsection{The generalized Gauss images of complete nonorientable minimal surfaces of finite total curvature} 

We summarize some basic facts of nonorientable minimal surfaces in ${\R}^{3}$. 
For more details, we refer the reader to \cite{AFL2020, AFL2021, El1986, LM2000, Ma2005, Mi1981}. 
Let $\widehat{X}\colon \widehat{\Sigma}\to {\mathbf{R}}^{3}$ be a conformal minimal immersion of a nonorientable 
Riemann surface $\widehat{\Sigma}$ in ${\mathbf{R}}^{3}$. If we consider the orientable conformal double 
cover $\pi \colon \Sigma \to \widehat{\Sigma}$, then the composition $X:=\widehat{X}\circ \pi \colon \Sigma \to {\mathbf{R}}^{3}$ 
is a conformal minimal immersion of the orientable Riemann surface $\Sigma$ in ${\mathbf{R}}^{3}$. 
Let $I\colon \Sigma \to \Sigma$ denote the antiholomorphic order two deck transformation associated to the orientable 
cover $\pi \colon \Sigma \to \widehat{\Sigma}$, then $I^{\ast} ({\phi}_{i})=\bar{\phi}_{i}\, (i=1, 2, 3)$ or equivalently, 
\begin{equation}\label{eq-appl-nonori-1}
g\circ I = -\dfrac{1}{\bar{g}}, \quad  I^{\ast}(hdz) = -g^2 hdz.  
\end{equation}
Conversely, if $(g, hdz)$ is the Weierstrass data of an orientable minimal surface $X (\Sigma)$ in  ${\mathbf{R}}^{3}$ and 
$I$ is an antiholomorphic involution without fixed points in $\Sigma$ satisfying (\ref{eq-appl-nonori-1}), then the unique map 
$\widehat{X}\colon \widehat{\Sigma}=\Sigma /\langle I \rangle \to {\mathbf{R}}^{3}$ satisfying that $X=\widehat{X}\circ \pi$ is 
a nonorientable minimal surface in ${\mathbf{R}}^{3}$. 

The fact that $g\circ I= -(\bar{g})^{-1}$ implies the existence of a map $\hat{g}\colon \widehat{\Sigma} \to \mathbf{RP}^{2}$ 
satisfying $\hat{g}\circ \pi = {\pi}_{0} \circ g$, where ${\pi}_{0}\colon \overline{\mathbf{C}} \to \R\Pi^{2}\equiv \overline{\mathbf{C}} /\langle I_{0} \rangle$ 
is the natural projection and $I_{0}:=-(\bar{z})^{-1}$ is the antipodal map of $\overline{\mathbf{C}}$. 
We call the map 
$\hat{g}\colon \widehat{\Sigma} \to \mathbf{RP}^2$ the {\it generalized Gauss map} of $\widehat{X}(\widehat{\Sigma})$. 
Applying Fact \ref{thm-rami2} to their generalized Gauss maps, the following result holds. 

\begin{fact}\cite{LM2000}\label{fa-nonori-LM}
The generalized Gauss map of a nonflat complete nonorientable minimal surface in $\mathbf{R}^3$ can omit at most $2$ points in $\mathbf{R}\mathbf{P}^2$. 
\end{fact}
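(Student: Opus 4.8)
The plan is to translate the statement about the generalized Gauss map $\hat{g}\colon \widehat{\Sigma}\to \mathbf{RP}^2$ into a statement about the ordinary Gauss map $g\colon \Sigma\to \overline{\mathbf{C}}$ on the orientable double cover, where Fact \ref{thm-rami2} is directly available. The key observation is the commutative relation $\hat{g}\circ \pi = \pi_{0}\circ g$, together with the fact that $g$ is the Gauss map of the complete orientable minimal surface $X=\widehat{X}\circ \pi$, which is nonflat precisely because $\widehat{X}(\widehat{\Sigma})$ is nonflat. Thus by Fact \ref{thm-rami2} we have $D_{g}\leq \nu_{g}\leq 4$, and it remains to convert an omitted point of $\hat{g}$ in $\mathbf{RP}^2$ into omitted values of $g$ in $\overline{\mathbf{C}}$.

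The central structural point is how $\pi_{0}$ behaves on fibers. Since $\pi_{0}\colon \overline{\mathbf{C}}\to \mathbf{RP}^2$ identifies $z$ with its antipode $I_{0}(z)=-1/\bar{z}$, each point of $\mathbf{RP}^2$ has exactly two preimages in $\overline{\mathbf{C}}$, namely an antipodal pair $\{z_{0}, -1/\bar{z}_{0}\}$. First I would argue that if a point $\hat{\alpha}\in \mathbf{RP}^2$ is omitted by $\hat{g}$, then both points of the antipodal pair $\pi_{0}^{-1}(\hat{\alpha})=\{\alpha, -1/\bar{\alpha}\}$ must be omitted by $g$: indeed if $g(p)=\alpha$ for some $p\in\Sigma$, then $\hat{g}(\pi(p))=\pi_{0}(g(p))=\pi_{0}(\alpha)=\hat{\alpha}$, contradicting that $\hat{\alpha}$ is omitted, and the same for $-1/\bar{\alpha}$ via $I_{0}$. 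Hence an omitted point of $\hat{g}$ yields two distinct omitted values of $g$ (they are distinct because $\alpha=-1/\bar{\alpha}$ is impossible in $\overline{\mathbf{C}}$, the antipodal map being fixed-point free).

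Moreover, the symmetry relation $g\circ I=-1/\bar{g}$ from \eqref{eq-appl-nonori-1} ensures that these omitted values come in $I_{0}$-antipodal pairs consistently: if $\alpha$ is omitted by $g$ then so is $-1/\bar{\alpha}$, since $I$ is a bijection of $\Sigma$ and $g(I(q))=-1/\overline{g(q)}$ never equals $\alpha$ when $g$ never equals $-1/\bar{\alpha}$. Consequently the omitted values of $g$ are organized into antipodal pairs, each pair projecting to a single omitted point of $\hat{g}$. Therefore, if $\hat{g}$ omits $N$ points of $\mathbf{RP}^2$, then $g$ omits exactly $2N$ distinct values of $\overline{\mathbf{C}}$, i.e. $D_{g}=2N$.

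Combining these pieces, $2N=D_{g}\leq \nu_{g}\leq 4$ gives $N\leq 2$, which is the claim. The main obstacle, and the step I would treat most carefully, is verifying that the $N$ omitted points of $\hat{g}$ genuinely produce $2N$ \emph{distinct} omitted values of $g$ rather than fewer; this reduces to the fixed-point freeness of the antipodal map $I_{0}$ (so each antipodal pair consists of two distinct points) together with distinctness across different omitted points of $\mathbf{RP}^2$ (automatic, since distinct points of $\mathbf{RP}^2$ have disjoint fibers). Once this bookkeeping is in place, the estimate follows immediately from Fact \ref{thm-rami2}, and I would also note that the nonflatness hypothesis on $\widehat{X}$ is exactly what guarantees $g$ is nonconstant so that Fact \ref{thm-rami2} applies.
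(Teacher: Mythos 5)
Your proposal is correct and follows exactly the route the paper indicates: the paper simply states that Fact \ref{fa-nonori-LM} follows by ``applying Fact \ref{thm-rami2} to their generalized Gauss maps,'' i.e.\ by passing to the orientable double cover, using $\hat{g}\circ\pi=\pi_{0}\circ g$ to convert each omitted point of $\hat{g}$ into an antipodal pair of omitted values of $g$, and invoking Fujimoto's bound $D_{g}\leq 4$. Your careful bookkeeping (fixed-point freeness of $I_{0}$, disjointness of fibers, and the symmetry $g\circ I=-1/\bar{g}$) correctly fills in the details the paper leaves implicit.
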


Fact \ref{fa-nonori-LM} is optimal because L\'opez and Mart\'in \cite{LM2000} showed that there exist complete nonorientable minimal surfaces in $\mathbf{R}^3$ whose generalized Gauss maps omit  
$2$ points in $\mathbf{R}\mathbf{P}^2$. Fact \ref{fact-Oss} implies that the generalized Gauss map of a nonflat complete nonorientable minimal 
surface of finite total curvature in $\mathbf{R}^3$ can omit at most $1$ points in $\mathbf{R}\mathbf{P}^2$. However,  in all known complete nonorientable minimal 
surfaces of finite total curvature in $\mathbf{R}^3$, the generalized Gauss maps are surjective. From this fact, L\'opez and Mart\'in proposed the following problem. 

\begin{problem}\cite{LM2000, Ma2005}\label{P-nonori}
Does there exist a complete nonorientable minimal surface of finite total curvature in $\mathbf{R}^3$ whose generalized Gauss map 
omits one point in $\mathbf{R}\mathbf{P}^2$? 
\end{problem}

In an analogous way, we obtained the following result for complete nonorientable minimal surfaces of finite total curvature in $\mathbf{R}^4$. 
For more details, see \cite{AAIK2017}. 

\begin{proposition}\cite[Proposition 3.7]{AAIK2017}\label{thm-appl-nonori-3} 
Let $\widehat{X}\colon \widehat{\Sigma}\to {\mathbf{R}}^{4}$ be a nonflat complete nonorientable minimal surface of finite total curvature 
and $\widehat{G}=(\hat{g_{1}}, \hat{g_{2}})$ be the generalized Gauss map of $\widehat{X}(\widehat{\Sigma})$. 
\begin{enumerate}
\item[(i)] Assume that $\hat{g}_{1}$ and $\hat{g}_{2}$ are both nonconstant. Then at least one of them can 
omit at most $1$ point in $\R\Pi^{2}$. 
\item[(i\hspace{-.1em}i)] If either $\hat{g}_{1}$ or $\hat{g}_{2}$, say $\hat{g}_{2}$, is constant, 
then $\hat{g}_{1}$ can omit at most $1$ point in $\R\Pi^{2}$. 
\end{enumerate}
\end{proposition}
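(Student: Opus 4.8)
The plan is to reduce the nonorientable statement in $\mathbf{R}^4$ to the orientable estimate of Theorem \ref{R4-fact-1} by passing to the orientable conformal double cover, exactly as Fact \ref{fa-nonori-LM} was deduced from Fact \ref{thm-rami2} in the $\mathbf{R}^3$ case. First I would set up the double cover $\pi\colon \Sigma \to \widehat{\Sigma}$ and observe that since $\widehat{X}\colon \widehat{\Sigma}\to \mathbf{R}^4$ is a nonflat complete nonorientable minimal surface of finite total curvature, the lift $X=\widehat{X}\circ\pi\colon \Sigma\to\mathbf{R}^4$ is a nonflat complete orientable minimal surface of finite total curvature, with the same total curvature doubled. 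By Fact \ref{thm-HO4}, $\Sigma$ is conformally $\overline{\Sigma}_G\backslash\{p_1,\dots,p_k\}$ and its Gauss map $g=(g_1,g_2)$ extends meromorphically. The key point is that each component $\hat g_i\colon \widehat{\Sigma}\to \mathbf{RP}^2$ lifts through $\pi_0\colon \overline{\mathbf{C}}\to \mathbf{RP}^2$ to the meromorphic $g_i$ on $\Sigma$, where $\pi_0$ identifies a point with its antipode under $I_0(z)=-1/\bar z$.

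The crucial translation step is the relationship between omitted points of $\hat g_i$ on $\widehat{\Sigma}$ and omitted values of $g_i$ on $\Sigma$. If $\hat g_i$ omits a point $[\alpha]\in\mathbf{RP}^2$, then $g_i$ omits the entire $I_0$-orbit $\{\alpha, -1/\bar\alpha\}$ in $\overline{\mathbf{C}}$, because the antiholomorphic involution $I$ on $\Sigma$ interchanges the two preimages of any point of $\widehat{\Sigma}$ and the analogue of the first relation in \eqref{eq-appl-nonori-1} forces the second Gauss component to satisfy $g_i\circ I=-1/\bar g_i$. Thus each omitted point of $\hat g_i$ produces two omitted values of $g_i$ (the orbit consists of two distinct points since $I_0$ is fixed-point-free). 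Consequently $D_{g_i}\geq 2\,\widehat{D}_{\hat g_i}$, where $\widehat{D}_{\hat g_i}$ denotes the number of omitted points of $\hat g_i$ in $\mathbf{RP}^2$.

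With this in hand, I would argue by contradiction in case (i). Suppose both $\hat g_1$ and $\hat g_2$ omit at least $2$ points in $\mathbf{RP}^2$. Then $D_{g_1}\geq 4$ and $D_{g_2}\geq 4$, and in particular both $g_1$ and $g_2$ are nonconstant with $\nu_{g_i}\geq D_{g_i}\geq 4>2$. Applying \eqref{eq-4-Kaw-2} of Theorem \ref{R4-fact-1}, one would like a contradiction with the quantitative bound; since $D_{g_i}\geq 4$ gives $1/(\nu_{g_1}-2)+1/(\nu_{g_2}-2)\leq 1/(D_{g_1}-2)+1/(D_{g_2}-2)\leq \tfrac12+\tfrac12=1$, which contradicts the strict inequality $1/(\nu_{g_1}-2)+1/(\nu_{g_2}-2)>1$. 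This yields the claim that at least one $\hat g_i$ omits at most $1$ point. For case (ii), when $\hat g_2$ is constant, then $g_2$ is constant on $\Sigma$, and I would invoke part (ii) of Theorem \ref{R4-fact-1} giving $\nu_{g_1}<3$; hence $D_{g_1}\leq \nu_{g_1}<3$ forces $D_{g_1}\leq 2$, which by the doubling relation $D_{g_1}\geq 2\widehat{D}_{\hat g_1}$ gives $\widehat{D}_{\hat g_1}\leq 1$.

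The main obstacle I anticipate is justifying the doubling relation $D_{g_i}\geq 2\widehat{D}_{\hat g_i}$ cleanly, namely verifying that the two points $\alpha$ and $-1/\bar\alpha$ of an omitted orbit are genuinely distinct omitted values of $g_i$ and that no value is double-counted across the two components. This rests on $I$ being fixed-point-free (so $I_0$ has no fixed points and each orbit has exactly two elements) together with the precise form of the symmetry relations \eqref{eq-appl-nonori-1} adapted to the pair $(g_1,g_2)$; I would need to record how $I$ acts on the $\mathbf{R}^4$ Weierstrass components and confirm that it induces $g_i\circ I=-1/\bar g_i$ for each $i$ (possibly after the standard normalization that $I$ fixes the splitting of the Gauss components). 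Once that symmetry is pinned down, the counting and the appeal to Theorem \ref{R4-fact-1} are routine, so the conceptual weight of the proof lies entirely in this reduction step.
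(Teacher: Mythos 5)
Your proposal is correct and follows essentially the same route as the paper's source for this statement (the proposition is quoted from \cite[Proposition 3.7]{AAIK2017}, whose argument is exactly the "analogous way" the paper indicates for the $\mathbf{R}^3$ case): pass to the orientable double cover, use the antipodal symmetry $g_{i}\circ I=-1/\bar{g}_{i}$ so that each omitted point of $\hat{g}_{i}$ in $\mathbf{RP}^2$ yields the two distinct omitted values $\alpha$, $-1/\bar{\alpha}$ of $g_{i}$, and then invoke the orientable finite-total-curvature estimates (Theorem \ref{R4-fact-1}, equivalently Corollary \ref{R4-cor-2}) to get the contradiction in case (i) and the bound $D_{g_{1}}\leq 2$ in case (ii). The symmetry relation you flagged as the main obstacle does hold for both components: it follows from $I^{\ast}\phi_{i}=\bar{\phi}_{i}$ together with the conformality relation $(\phi_{1}-\mathrm{i}\phi_{2})(\phi_{1}+\mathrm{i}\phi_{2})=-(\phi_{3}-\mathrm{i}\phi_{4})(\phi_{3}+\mathrm{i}\phi_{4})$, exactly as you anticipated.
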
 

However, we also do not know whether Proposition \ref{thm-appl-nonori-3} is optimal or not. 

\subsection{Flat point conjecture} At the end of this paper, we explain the following conjecture. 

\begin{conjecture}[Flat point conjecture]\label{P-flat}
If a nonflat complete minimal surface in $\mathbf{R}^3$ has at least $1$ flat point, then its Gauss map omits at most $3$ values. 
\end{conjecture}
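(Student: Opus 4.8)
The plan is to argue by contraposition: assuming the Gauss map $g$ omits $4$ values, I would show that the surface has no flat point. This reduction is legitimate because, by the Fujimoto theorem (Fact \ref{thm-rami2}), one always has $D_{g}\leq 4$; hence if $D_{g}\leq 3$ the assertion is vacuously true, and only the extremal case $D_{g}=4$ requires work. Moreover, by the Osserman result (Fact \ref{fact-Oss}), $D_{g}=4$ forces infinite total curvature, so the conjecture has genuine content only outside the algebraic class studied in Section \ref{sec-R3}. Finally, by the curvature formula \eqref{eq-G-curvature}, a flat point (a zero of $K$) is precisely a zero of $g'$, i.e. a ramification point of $g$. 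Thus the conjecture is equivalent to the rigidity statement: \emph{if a nonflat complete minimal surface in $\mathbf{R}^3$ attains the Fujimoto bound $D_{g}=4$, then $g$ is everywhere unramified.}

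The two classical extremal surfaces guide the approach. For both the Scherk surface and the Voss surface the Gauss map is, up to a M\"obius transformation, the coordinate function $z$, which is locally biholomorphic, so $g'\neq 0$ everywhere and no flat point occurs. I would therefore try to prove that omitting $4$ values forces $g$ to be a local biholomorphism onto $N:=\overline{\mathbf{C}}\setminus\{a_{1},a_{2},a_{3},a_{4}\}$. The natural machinery is to compare the complete conformal metric $ds^{2}=(1+|g|^{2})^{2}|h|^{2}|dz|^{2}$ with the pull-back $g^{\ast}\omega_{N}$ of the complete hyperbolic metric $\omega_{N}$ of constant curvature $-1$ on the $4$-punctured sphere $N$ (which is hyperbolic precisely because four points are removed). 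Away from the zeros of $g'$ the pseudo-metric $g^{\ast}\omega_{N}$ again has curvature $-1$, while at each zero of $g'$ of order $m$ it degenerates, acquiring a cone-type singularity with excess angle $2\pi m$. The aim is to show that completeness of $ds^{2}$, which saturates Fujimoto's estimate exactly when $D_{g}=4$, is incompatible with the presence of such a singularity.

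Concretely, I would revisit the proof of Fact \ref{thm-rami2} and retain the ramification contribution that is ordinarily discarded. Fujimoto's argument rules out $5$ omitted values by building an auxiliary metric of strictly negative curvature and contradicting completeness, and the case $D_{g}=4$ sits exactly on the boundary of that argument. The idea is that a single branch point of $g$ injects an extra, strictly positive amount of negative curvature (the analog of the Nevanlinna ramification term $N(r,1/g')$, or of the Ahlfors excess-angle contribution), so that a surface with $D_{g}=4$ \emph{and} a flat point should behave, in the relevant Schwarz--Ahlfors comparison, like one with effectively more than $4$ omitted values, reproducing Fujimoto's contradiction. Making this quantitative --- showing that one flat point already tips the saturated inequality --- is the crux.

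The hard part will be precisely this last step, because it demands a non-asymptotic argument. All the defect-type inequalities available here (Fact \ref{thm-rami2}, the defect relation of Nevanlinna theory, and the geometric estimates of Theorem \ref{thm-KKM}) are formulated through limits of counting functions, in which a single ramification point has measure zero and is therefore invisible; this is exactly why the global weight bound $\nu_{g}\leq 4$ cannot by itself exclude an isolated branch point, and why the statement remains only a conjecture. Overcoming this obstacle seems to require either a genuinely local-to-global comparison that detects individual critical points, or a complete classification of the extremal case $D_{g}=4$ --- for instance via the universal covering $\mathbb{D}\to N$, lifting $g$ to $\tilde g\colon\widetilde{\Sigma}\to\mathbb{D}$ and analyzing the resulting bounded map together with the completeness of $ds^{2}$ --- showing that every such surface has $g$ locally injective. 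I expect the classification route to be the most promising, and also the most delicate, point of the whole argument.
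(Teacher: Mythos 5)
Your proposal has a genuine gap, but it is the honest kind: the statement you were given is an open conjecture, the paper itself offers no proof of it, and your outline --- by your own admission --- also stops short. The unfilled step is exactly the one you call the crux: showing that a single zero of $g'$ ``tips'' Fujimoto's saturated estimate once $D_{g}=4$. Nothing in your outline supplies this, and your diagnosis of why it is hard is accurate --- every inequality available here (Fact \ref{thm-rami2}, the Nevanlinna defect relation, Theorem \ref{thm-KKM}) is asymptotic, so an isolated branch point contributes nothing to the relevant limits of counting functions. That is precisely why Conjecture \ref{P-flat} remains open. Your preliminary reductions, on the other hand, are all correct: the contrapositive formulation, the identification of flat points with zeros of $g'$ via \eqref{eq-G-curvature}, the remark via Fact \ref{fact-Oss} that the finite-total-curvature case is immediate, and the resulting equivalence with the rigidity statement that $D_{g}=4$ forces $g$ to be unramified.

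What you missed is that the paper does prove the conjecture on a class strictly larger than the finite-total-curvature surfaces you set aside, namely for all \emph{pseudo-algebraic} surfaces (a class containing the Scherk and Voss surfaces, both of infinite total curvature), and it does so by precisely the mechanism you propose --- retaining the ramification contribution that is ordinarily discarded --- but in its algebraic incarnation, where it actually closes. By \cite[Theorem 3.3]{KKM2008}, if $l$ denotes the number of (not necessarily totally) ramified values of $g$ other than omitted values, then $D_{g}\leq 2+\frac{2}{R}-\frac{l}{d}$; for a pseudo-algebraic surface $1/R\leq 1$, and a flat point $p$ makes the attained (hence non-omitted) value $g(p)$ a ramified value, so $l\geq 1$ and $D_{g}\leq 4-\frac{l}{d}<4$. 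There the ramification term is a finite count weighed against the degree $d$ of the extended Gauss map, so a single branch point is visible; in your transcendental Schwarz--Ahlfors setup it is not. So you and the paper agree on the mechanism but differ on where it can be executed: the paper's algebraic argument settles the pseudo-algebraic case in a few lines, while your route --- controlling the extremal case $D_{g}=4$ for arbitrary complete surfaces via the hyperbolic metric of the four-punctured sphere --- is not a proof strategy anyone knows how to complete, but a restatement of the open problem, which the paper explicitly confines to the non-pseudo-algebraic surfaces.
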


A {\it flat point} means a point where the Gauss curvature of the surface vanishes. Conjecture \ref{P-flat} can be rephrased as follows: if the Gauss map of a complete minimal surface in $\mathbf{R}^3$ has exactly $4$ omitted values, then the Gauss curvature is strictly negative on everywhere, that is, the surface has no flat point. Conjecture \ref{P-flat} is true if a complete minimal surface is ``pseudo-algebraic''.     
Here a complete minimal surface $X\colon \Sigma \to \mathbf{R}^3$ is said to be {\it pseudo-algebraic} if the following conditions are satisfied: 
\begin{enumerate}
\item[(i)] The W-data $(g, hdz)$ is defined on a Riemann surface $\overline{\Sigma}_{G}\backslash \{ p_{1}, \ldots, p_{k} \}$, where $\overline{\Sigma}_{G}$ is a closed Riemann surface of genus $G$ 
and $p_{1}, \ldots, p_{k}\in \overline{\Sigma}_{G}$.   
\item[(i\hspace{-.1em}i)] The W-data $(g, hdz)$ can be extended meromorphically to $\overline{\Sigma}_{G}$.  
\end{enumerate}
This class does not assume the period condition on $\Sigma$, that is, a surface in this class does not necessarily have to be well-defined on $\Sigma$. 
If a pseudo-algebraic minimal surface is not well-defined on $\Sigma$, it is defined on some covering surface of $\Sigma$, in the worst case, on the universal covering surface.  
Note that Gackstatter called such surfaces {\it abelian minimal surfaces} \cite{Ga1976}. 
Complete minimal surfaces of finite total curvature, the classical Scherk surface and the Voss surface are all pseudo-algebraic.  

The reason why Conjecture \ref{P-flat} holds for nonflat pseudo-algebraic minimal surfaces is as follows:  
If the number of (not necessarily totally) ramified values except for omitted values of $g$ is $l$, the first author, Kobayashi and Miyaoka \cite[Theorem 3.3]{KKM2008} proved that
\[
D_{g} \leq 2+\dfrac{2}{R} -\dfrac{l}{d}. 
\]
For pseudo-algebraic minimal surfaces, we have $1/R \leq 1$. We thus obtain $D_{g}\leq 4-(l/d)$. 
By  \eqref{eq-G-curvature}, a flat point of the surface coincides with a point where $g'$ vanishes. 
If the surface has at least $1$ flat point, then $l\geq 1$ holds. Hence we have $D_{g}< 4$, that is, $D_{g}\leq 3$.  

We do not know that Conjecture \ref{P-flat} is also true or not for nonflat complete minimal surfaces which are not pseudo-algebraic in $\mathbf{R}^3$. 


\end{document}